\documentclass[12pt]{amsart}
\usepackage{amssymb}
\usepackage{amsmath, amsfonts, amsthm, amssymb, cite}
\usepackage{color}
\usepackage[all]{xy}
\oddsidemargin=0in
\evensidemargin=0in
\textwidth=6.7in
\textheight =8.0in
\headheight=14pt

\usepackage{fancyhdr}
\pagestyle{fancyplain}

 \fancyhf{}

\fancyhead[RO]{\thepage}
\fancyhead[LE]{\thepage} 
\fancyhead[CE]{\hfill A. Carey, V. Gayral, J. Phillips, A. Rennie, F. Sukochev}
\fancyhead[CO]{Nonunital spectral flow\hfill}

\begin{document}



\newtheorem{theorem}{Theorem}[section]
\newcommand{\mar}[1]{{\marginpar{\textsf{#1}}}}
\numberwithin{equation}{section}
\newtheorem*{theorem*}{Theorem}
\newtheorem{prop}[theorem]{Proposition}
\newtheorem*{prop*}{Proposition}
\newtheorem{lemma}[theorem]{Lemma}
\newtheorem{corollary}[theorem]{Corollary}
\newtheorem*{conj*}{Conjecture}
\newtheorem*{corollary*}{Corollary}
\newtheorem{definition}[theorem]{Definition}
\newtheorem*{definition*}{Definition}
\newtheorem{remarks}[theorem]{Remarks}
\newtheorem*{remarks*}{remarks}
\newtheorem{rems}[theorem]{Remarks}
\newtheorem{rem}{Remark}
\newtheorem*{rem*}{Remark}
\newtheorem*{rems*}{Remarks}
\newtheorem{cor}[theorem]{Corollary}
\newtheorem{defn}[theorem]{Definition}
\newtheorem{thm}[theorem]{Theorem}

\newtheorem*{not*}{Notation}
\newcommand\pa{\partial}
\newcommand\cohom{\operatorname{H}}
\newcommand\Td{\operatorname{Td}}
\newcommand\Trig{\operatorname{Trig}}
\newcommand\Hom{\operatorname{Hom}}
\newcommand\End{\operatorname{End}}
\newcommand\Ker{\operatorname{Ker}}
\newcommand\Ind{\operatorname{Ind}}
\newcommand\cker{\operatorname{coker}}
\newcommand\oH{\operatorname{H}}
\newcommand\oK{\operatorname{K}}
\newcommand\codim{\operatorname{codim}}
\newcommand\Exp{\operatorname{Exp}}
\newcommand\CAP{{\mathcal AP}}
\newcommand\T{\mathbb T}
\newcommand{\M}{\mathcal{M}}
\newcommand\ep{\epsilon}
\newcommand\te{\tilde e}
\newcommand\Dd{{\mathcal D}}

\newcommand\what{\widehat}
\newcommand\wtit{\widetilde}
\newcommand\mfS{{\mathfrak S}}
\newcommand\cA{{\mathcal A}}
\newcommand\maA{{\mathcal A}}
\newcommand\maF{{\mathcal F}}
\newcommand\maN{{\mathcal N}}
\newcommand\cM{{\mathcal M}}
\newcommand\maE{{\mathcal E}}
\newcommand\cF{{\mathcal F}}
\newcommand\maG{{\mathcal G}}
\newcommand\cG{{\mathcal G}}
\newcommand\cH{{\mathcal H}}
\newcommand\maH{{\mathcal H}}
\renewcommand\H{{\mathcal H}}
\newcommand\cO{{\mathcal O}}
\newcommand\cR{{\mathcal R}}
\newcommand\cS{{\mathcal S}}
\newcommand\cU{{\mathcal U}}
\newcommand\cV{{\mathcal V}}
\newcommand\cX{{\mathcal X}}
\newcommand\cD{{\mathcal D}}
\newcommand\cnn{{\mathcal N}}
\newcommand\wD{\widetilde{D}}
\newcommand\wL{\widetilde{L}}
\newcommand\wM{\widetilde{M}}
\newcommand\wV{\widetilde{V}}
\newcommand\Ee{{\mathcal E}}
\newcommand{\npartial}{\slash\!\!\!\partial}
\newcommand{\Heis}{\operatorname{Heis}}
\newcommand{\Solv}{\operatorname{Solv}}
\newcommand{\Spin}{\operatorname{Spin}}
\newcommand{\SO}{\operatorname{SO}}
\newcommand{\ind}{\operatorname{ind}}
\newcommand{\Index}{\operatorname{Index}}
\newcommand{\ch}{\operatorname{ch}}
\newcommand{\rank}{\operatorname{rank}}
\newcommand{\G}{\Gamma}
\newcommand{\HK}{\operatorname{HK}}
\newcommand{\Dix}{\operatorname{Dix}}

\newcommand{\tM}{\tilde{M}}  
\newcommand{\tS}{\tilde{S}}
\newcommand{\tH}{\tilde{\mathcal H}}
\newcommand{\tg}{\tilde{g}}
\newcommand{\tx}{\tilde{x}}
\newcommand{\ty}{\tilde{y}}
\newcommand{\ox}{\otimes}


\newcommand{\abs}[1]{\lvert#1\rvert}
 \newcommand{\A}{{\mathcal A}}
        \newcommand{\D}{{\mathcal D}}
\newcommand{\HH}{{\mathcal H}}
        \newcommand{\LL}{{\mathcal L}}
        \newcommand{\B}{{\mathcal B}}
        \newcommand{\K}{{\mathcal K}}
\newcommand{\oo}{{\mathcal O}}
         \newcommand{\PP}{{\mathcal P}}
         \newcommand{\Q}{{\mathcal Q}}
        \newcommand{\s}{\sigma}
        \newcommand{\coker}{{\mbox coker}}
        \newcommand{\dd}{|\D|}
        \newcommand{\n}{\parallel}
\newcommand{\bma}{\left(\begin{array}{cc}}
\newcommand{\ema}{\end{array}\right)}

\newcommand{\bca}{\left(\begin{array}{c}}
\newcommand{\eca}{\end{array}\right)}
\newcommand{\sr}{\stackrel}
\newcommand{\da}{\downarrow}
\newcommand{\tD}{\tilde{\D}}
        \newcommand{\R}{\mathbb R}
        \newcommand{\C}{\mathbb C}
        \newcommand{\h}{\mathbb H}
\newcommand{\Z}{\mathcal Z}
\newcommand{\N}{\mathbb N}
\newcommand{\tto}{\longrightarrow}
\newcommand{\ZZ}{{\mathcal Z}}
\newcommand{\ben}{\begin{displaymath}}
        \newcommand{\een}{\end{displaymath}}
\newcommand{\be}{\begin{equation}}
\newcommand{\ee}{\end{equation}}
        \newcommand{\bean}{\begin{eqnarray*}}
        \newcommand{\eean}{\end{eqnarray*}}
\newcommand{\nno}{\nonumber\\}
\newcommand{\bea}{\begin{eqnarray}}
        \newcommand{\eea}{\end{eqnarray}}
\newcommand{\x}{\times}

\newcommand{\Ga}{\Gamma}
\newcommand{\e}{\epsilon}
\renewcommand{\L}{\mathcal{L}}
\newcommand{\supp}[1]{\operatorname{#1}}
\newcommand{\norm}[1]{\parallel\, #1\, \parallel}
\newcommand{\ip}[2]{\langle #1,#2\rangle}
\newcommand{\nc}{\newcommand}
\nc{\gf}[2]{\genfrac{}{}{0pt}{}{#1}{#2}}
\nc{\mb}[1]{{\mbox{$ #1 $}}}
\nc{\real}{{\mathbb R}}
\nc{\comp}{{\mathbb C}}
\nc{\ints}{{\mathbb Z}}
\nc{\Ltoo}{\mb{L^2({\mathbf H})}}
\nc{\rtoo}{\mb{{\mathbf R}^2}}
\nc{\slr}{{\mathbf {SL}}(2,\real)}
\nc{\slz}{{\mathbf {SL}}(2,\ints)}
\nc{\su}{{\mathbf {SU}}(1,1)}
\nc{\so}{{\mathbf {SO}}}
\nc{\hyp}{{\mathbb H}}
\nc{\disc}{{\mathbf D}}
\nc{\torus}{{\mathbb T}}
\newcommand{\tk}{\widetilde{K}}
\newcommand{\boe}{{\bf e}}\newcommand{\bt}{{\bf t}}
\newcommand{\vth}{\vartheta}
\newcommand{\CGh}{\widetilde{\CG}}
\newcommand{\db}{\overline{\partial}}
\newcommand{\tE}{\widetilde{E}}
\newcommand{\tr}{{\rm tr}}
\newcommand{\ta}{\widetilde{\alpha}}
\newcommand{\tb}{\widetilde{\beta}}
\newcommand{\txi}{\widetilde{\xi}}
\newcommand{\hV}{\hat{V}}
\newcommand{\IC}{\mathbf{C}}
\newcommand{\IZ}{\mathbf{Z}}
\newcommand{\IP}{\mathbf{P}}
\newcommand{\IR}{\mathbf{R}}
\newcommand{\IH}{\mathbf{H}}
\newcommand{\IG}{\mathbf{G}}
\newcommand{\IS}{\mathbf{S}}
\newcommand{\CC}{{\mathcal C}}
\newcommand{\CD}{{\mathcal D}}
\newcommand{\CS}{{\mathcal S}}
\newcommand{\CG}{{\mathcal G}}
\newcommand{\CL}{{\mathcal L}}
\newcommand{\CO}{{\mathcal O}}
\nc{\ca}{{\mathcal A}}
\nc{\cag}{{{\mathcal A}^\Gamma}}
\nc{\cg}{{\mathcal G}}
\nc{\chh}{{\mathcal H}}
\nc{\ck}{{\mathcal B}}
\nc{\cd}{{\mathcal D}}
\nc{\cl}{{\mathcal L}}
\nc{\cm}{{\mathcal M}}
\nc{\cn}{{\mathcal N}}
\nc{\cs}{{\mathcal S}}
\nc{\cz}{{\mathcal Z}}
\nc{\sind}{\sigma{\rm -ind}}

\newcommand\clFN{{\mathcal F_\tau(\mathcal N)}}       
\newcommand\clKN{{\mathcal K_\tau(\mathcal N)}}       
\newcommand\clQN{{\mathcal Q_\tau(\mathcal N)}}       %
\newcommand\tF{\tilde F}
\newcommand\clA{\mathcal A}
\newcommand\clH{\mathcal H}
\newcommand\clN{\mathcal N}
\newcommand\Del{\Delta}
\newcommand\g{\gamma}
\newcommand\eps{\varepsilon}
\newcommand\vf{\varphi}
\newcommand\E{\mathcal E}

\newcommand{\CDA}{\mathcal{C_D(A)}} 
\newcommand{\dslash}{{\pa\mkern-10mu/\,}}

\newcommand{\sepword}[1]{\quad\mbox{#1}\quad} 
\newcommand{\comment}[1]{\textsf{#1}}   

\nc{\nt}{\newtheorem}
\nc{\bra}{\langle}
\nc{\ket}{\rangle}
\nc{\cal}{\mathcal}
\nc{\frk}{\mathfrak}

\parindent=0.0in

 \title{Spectral flow for nonunital spectral triples}

\author{A. L. CAREY}
\address{Mathematical Sciences Institute,
 Australian National University\\
 Canberra ACT, 0200 AUSTRALIA, and\newline
 School of Mathematics and Applied Statistics\\
University of Wollongong\\
Wollongong NSW, 2500 AUSTRALIA\\
 e-mail: alan.carey@anu.edu.au\\}
\author{V. GAYRAL}
\address{Laboratoire de Math\'ematiques\\
Universit\'e Reims Champagne-Ardenne\\
Moulin de la Housse-BP 1039, 51687 Reims FRANCE\\
e-mail: victor.gayral@univ-reims.fr}
\author{J. PHILLIPS}
\address{Department of Mathematics and Statistics\\
University of Victoria\\
Victoria BC, CANADA\\
e-mail: johnphil@uvic.ca}
\author{A. RENNIE}
\address{School of Mathematics and Applied Statistics\\
University of Wollongong\\
Wollongong NSW, 2500 AUSTRALIA\\
e-mail: renniea@uow.edu.au}
\author{F. A. SUKOCHEV}
\address{School of Mathematics and Statistics,
University of New South Wales\\
Kensington NSW, 2052 AUSTRALIA\\
e-mail: f.sukochev@unsw.edu.au}

\begin{abstract}
We prove two results about nonunital index theory left open by \cite{CGRS2}. The
first is that the spectral triple arising from an action of the reals on a $C^*$-algebra with invariant trace
satisfies the hypotheses of the nonunital local index formula. The second result concerns the meaning of spectral flow in the nonunital case. For the special case of paths
arising from the odd
index pairing for smooth spectral triples in the nonunital setting we are able to connect with earlier approaches to the analytic definition of spectral flow. 
\end{abstract}
\maketitle

\tableofcontents
\parskip=4pt
\section{Introduction}
The local index formula in noncommutative geometry originated 
in the paper of Connes-Moscovici \cite{CM}.
Subsequent applications have revealed that it provides a unifying 
viewpoint for many formerly unrelated isolated classical theorems. 
It also produces a way  to calculate topological invariants for noncommutative algebras.

In \cite{CGRS2}, a local index formula (generalising both \cite{CM, Hi} and \cite{CPRS2,CPRS3}) 
was derived for nonunital spectral triples. Such spectral triples encompass as examples 
classical Dirac type operators on noncompact manifolds as well as noncommutative examples.
The local index formula of \cite{CGRS2} computes, in particular, a pairing of $K$-homology with 
$K$-theory using a generalisation of the residue cocycle first encountered in \cite{CM}. 
From a conceptual point of view,
this index pairing is defined using the Kasparov product.

Recall that a nonunital spectral triple $(\A, \H, \D)$ is given by 
a nonunital $*$-algebra $\A$ acting
on a Hilbert space $\H$, together with an unbounded self-adjoint operator $\D$
such that all commutators $[\D,a]$ are densely defined and
bounded, and $a(1+\D^2)^{-1/2}$ is compact for all $a\in\A$. Typically however,
$(1+\D^2)^{-1/2}$ is not compact.
In the odd case, it
was shown in \cite{CGRS2} that this $K$-theoretical
pairing can be realised as the index of a generalised Toeplitz operator
even in the nonunital setting. Whereas in the unital case the 
relationship between spectral flow and the Toeplitz theory is not 
difficult (see for example the discussion in \cite{BCPRSW})
a lengthier argument is needed in the nonunital case in order to 
explain the sense in which we are computing the spectral flow. 
The issue is that the residue formula appears to be using
a path of unbounded operators, none of which are Fredholm. This paper provides such an argument.

We present here two main results.
The first is that the index formula for generalised Toeplitz operators in \cite{PR},
arising from actions of the reals on a nonunital $C^*$-algebra, fits into the framework of
the nonunital local index formula of \cite{CGRS2}.

The second result justifies  the notion that the local index formula of
\cite{CGRS2} is computing spectral flow.  We follow an idea originating with I.M. Singer \cite{Singer},
 refined in \cite{G}, and introduce an exact one form on 
 a suitable affine space of  perturbations of $\D$. We then show how to write the index of the generalised Toeplitz operator of \cite{CGRS2} as the integral of this one form in a fashion which provides a direct comparison with the unital formula of \cite{CP2}. The idea is to reverse the argument in 
\cite{CPRS2} which goes from an integral formula for spectral flow to the resolvent cocycle formula. 
Thus we start from the resolvent cocycle in the nonunital setting
and derive from it
 a variant of the integral formulas for spectral flow that appear in \cite{CP1, CP2}. Our formula will apply to certain paths of operators with unitarily equivalent endpoints and is written in terms of paths of  operators that are possibly non-Fredholm. We remark that in the unital case this formula has had many applications and  its origins lie in the `variation of eta'  formula
that appears in Atiyah-Patodi-Singer \cite{APS3}.

The issue that arises in the nonunital case is that both bounded and unbounded Kasparov modules
(and thus spectral triples for nonunital algebras) do not lead directly to the study of Fredholm operators.
Rather one needs to modify the operator that appears in the definition of the Kasparov module in some fashion in order to obtain a Fredholm operator.  This fact is already well known in the traditional approach to Dirac type operators on non-compact manifolds where  one needs to twist the Dirac operator by special connections in order to have a Fredholm problem.
That this issue does have a sensible answer for the paths considered here suggests that
there may be broader classes of paths for which we can obtain spectral flow formulas, however we
 leave these speculative issues for the future.

The plan of the paper is as follows. In Section \ref{sec:cut-and-paste} 
we recall the integration and pseudodifferential operator 
theories (for nonunital spectral triples) of \cite{CGRS2}. In addition, Section \ref{sec:cut-and-paste} 
extends some results of \cite{CGRS2} to
identify an affine space of perturbations adapted to the above mentioned problem of spectral flow in the nonunital case.
All our constructions are done in the context of
general semifinite spectral triples, which is necessary to handle numerous examples, including
the generalised Toeplitz examples of \cite{PR}.

Section \ref{sec:PR} proves that there is a  (semifinite) spectral triple
that satisfies the hypotheses of the local index formula, such that the index theorems of Lesch, \cite{L},
and Phillips-Raeburn \cite{PR}, can be recovered using the procedure of \cite{CGRS2}. Indeed, 
the unital result of Lesch is already contained in \cite{CPRS2} (see also \cite{CPS2} for the 
connection to the spectral flow formula).

In the final Section \ref{sec:spec-flow}, we prove our
 main result. It states that given a spectral triple $(\A,\H,\D)$ satisfying the hypotheses that lead to the local index formula of \cite{CGRS2}, and a unitary $u\in \A^\sim$ in the minimal unitisation of $\A$, we can compute the
odd index pairing between $[u]\in K^1(\A)$ and $[(\A,\H,\D)]\in K_1(\A)$ using a formula analogous to those in \cite{CP1, CP2} for
spectral flow in the unital case. We stress that the path we consider here, namely $[0,1]\ni t\mapsto \D+tu[\D,u^*]$, need not be a
path of unbounded Fredholm operators. Nevertheless the method we adopt 
may be seen to determine, from our inital path, a related path of Fredholm operators and our formula
in terms of $\D$ computes
the spectral flow of this related Fredholm path. Moreover we show that this is also the index
of the generalised Toeplitz operator $PuP$ where $P$ is the non-negative spectral projection of $\D$
as would be expected given the formulations of \cite{BeF, CM} and \cite{BCPRSW} .





{\bf Acknowledgements}
AC, AR, FS acknowledge the support of the ARC, and JP acknowledges the support
of NSERC.  AC also acknowledges the Alexander von Humboldt Stiftung and thanks colleagues at the University of M\"unster for support while this research was undertaken.

\section{Technical preliminaries}
\label{sec:cut-and-paste}


\subsection{Background material}

In this preliminary section, we  import notation, definitions and results from \cite{CGRS2}.
In all that follows, $\D$ is a self-adjoint operator affiliated to a semifinite von Neumann algebra $\mathcal{N}$ equipped with faithful normal semifinite 
trace $\tau$, where $\mathcal{N}\subset \B(\H)$,
and $\H$ is a separable Hilbert space,

\begin{definition}\label{def:d-does-int}
For any positive number $s>0$, we define the weight $\vf_s$ on $\cn$ by
$$
T\in\cn_+\mapsto\vf_s(T):=\tau\big((1+\D^2)^{-s/4}T(1+\D^2)^{-s/4}\big)\in[0,+\infty].
$$
As usual, we set
$
\cn_{\vf_s}:={\rm span}\{\cn_{\vf_s,+}\}
={\rm span}\big\{\big(\cn_{\vf_s}^{1/2}\big)^*\cn_{\vf_s}^{1/2}\}\subset\cn,
$
where
$$
\cn_{\vf_s,+}
:=\left\{T\in\cn_+:\vf_s(T)<\infty\right\}\quad {\rm and} \quad
\cn_{\vf_s}^{1/2}:=\{T\in\cn:T^*T\in \cn_{\vf_s,+}\}.
$$
\end{definition}

With the notation as in Definition \ref{def:d-does-int},
the weights $\vf_s$, $s>0$, are faithful, normal and semifinite, \cite[Lemma 2.2]{CGRS2}.
We will also need the spaces $\mathcal{L}^p(\cn,\tau)$ of measurable operators $T$ affiliated to $\cn$ with
$\tau(|T|^p)<\infty$. With this notation, $\cn_\tau=\cn\cap\mathcal{L}^1(\cn,\tau)$ and 
$\cn_\tau^{1/2}=\cn\cap\mathcal{L}^2(\cn,\tau)$. This differs from the notation of \cite{CGRS2}.

\begin{definition} 
\label{def:define-all}
Retain the notation of Definition \ref{def:d-does-int}.

(i) For each $p\geq 1$ we define
$
\B_2(\D,p):=\bigcap_{s>p} \Big(\cn_{\vf_s}^{1/2}\bigcap \cn_{\vf_s}^{1/2*}\Big).
$\\
(ii) We set
$\B_1(\D,p)= \B_2(\D,p)^2:={\rm span}\{TS:\,T,\,S\in\B_2(\D,p)\}$.
%
%

(iii) Set $\HH_\infty=\bigcap_{k\geq 0} {\rm dom}\,\D^k$. For an 
operator $T\in\cn$ such that $T:\HH_\infty\to\HH_\infty$
we set
$$
\delta(T):=[(1+\D^2)^{1/2},T],\quad T\in \cn.
$$

(iv) In addition, if $T:\H_\infty\to\H_\infty$ we define $L(T),\,R(T):\H_\infty\to\H_\infty$ by

\begin{align}
\label{LR}
L(T):=(1+\mathcal{D}^2)^{-1/2}[\mathcal{D}^2,T],\quad R(T):=[\mathcal{D}^2,T](1+\mathcal{D}^2)^{-1/2}.
\end{align}
%
%
%
 
 (v) Define 
$
\B_q^k(\D,p)
:=\big\{T\in\cn\,:\,\forall l=0,\dots,k,
\, \delta^l(T)\in \B_q(\D,p)\big\}.
$ for $k=0,1,2,\dots,\infty$ and $q=1,2$.
%
\end{definition}
The spaces $\B_2(\D,p)$ and $\B_1(\D,p)$ are Fr\'echet subalgbras of $\mathcal N$ (see \cite{CGRS2} subsections 2.1 and 2.2). The natural
topology of $\B_2(\D,p)$ is determined
by the family of seminorms 
\begin{equation}\label{pn}
 \Q_n(T)
:=\left(\|T\|^2+\varphi_{p+1/n}(|T|^2)+\varphi_{p+1/n}(|T^*|^2)\right)^{1/2},\quad n=1,2,3\dots,
\end{equation}
and the topology of $\B_1(\D,p)$ is then determined by the family of seminorms 
\begin{equation}
\label{new-norm}
\PP_n(T)
:=\inf\Big\{\sum_{i=1}^k\Q_n( T_{1,i})\,\Q_n(T_{2,i})\ :\  
T=\sum_{i=1}^kT_{1,i}T_{2,i}\Big\},\quad n=1,2,3\dots.
\end{equation}
We equip $\B_1^k(\D,p)$ and $\B_2(\D,p)$, $k=0,1,2,\dots,\infty$, 
with the topology determined by the seminorms $\PP_{n,l}$ defined by
$$
 \PP_{n,l}(T)
:=\sum_{j=0}^l\PP_n(\delta^j(T)), \mbox{ and } \Q_{n,l}(T)
:=\sum_{j=0}^l\Q_n(\delta^j(T))
\quad n=1,2,\dots,\quad l=0,\dots,k.
$$
\begin{definition}
 \label{op0}
The set of {\bf regular order-$r$ pseudodifferential operators} is
$$
{{\rm OP}^r}(\D):=
(1+\D^2)^{r/2}\Big(\bigcap_{n\in\mathbb N}{\rm dom}\,\delta^n\Big),\quad r\in\mathbb R, 
\qquad {\rm OP}^*(\D):=\bigcup_{r\in\R}{\rm OP}^r(\D).
$$
The set of
{\bf  order-$r$ tame pseudodifferential operators} associated with 
$(\HH,\D)$ and $(\cn,\tau)$ for $p\geq 1$ is given by
$$
{\rm OP}^r_0(\D):=(1+\D^2)^{r/2}\B_1^\infty(\D,p) ,\quad r\in\mathbb R,\qquad {\rm OP}^*_0(\D)
:=\bigcup_{r\in\R}{\rm OP}^r_0(\D).
$$
We topologise  ${\rm OP}^r_0(\D)$  with the family of norms
$$
\PP_{n,l}^r(T):=\PP_{n,l}\big((1+\D^2)^{-r/2}T\big), \quad n,l\in\mathbb N.
$$
\end{definition}

With this definition, 
${\rm OP}^r(\D)$ and ${\rm OP}^r_0(\D)$ are Fr\'echet spaces, while ${\rm OP}^0(\D)$
and  ${\rm OP}^0_0(\D)$ are Fr\'echet $*$-algebras, and \cite[Lemma 1.31]{CGRS2} 
proves that ${\rm OP}^r(\D){\rm OP}^t_0(\D),\,{\rm OP}^t_0(\D){\rm OP}^r(\D)\subset {\rm OP}^{r+t}_0(\D)$.
In particular, $\B_1^\infty(\D,p)={\rm OP}^0_0(\D)$ is a two-sided $*$-ideal in 
${\rm OP}^0(\D)=\cap{\rm dom}\,\delta^m$. 
In \cite[Corollary 1.30]{CGRS2}  it is shown that 
$\bigcup_{r<-p}{\rm OP}^r_0(\D)\subset \L^1(\cn,\tau)\cap\cn=\cn_\tau$, which is the basic justification
for the introduction  of tame pseudodifferential operators in the nonunital setting.


The last ingredient from the pseudodifferential calculus is the complex one parameter group
of automorphisms 
 on ${\rm OP}^*(\D)$, defined by
\begin{equation}
\label{sigma}
\sigma^z(T):=(1+\D^2)^{z/2}\,T\,(1+\D^2)^{-z/2},\quad z\in\C,\ T\in {\rm OP}^*(\D).
\end{equation}
This group is strongly continuous and preserves each of the spaces
${\rm OP}^r(\D)$ and ${\rm OP}^r_0(\D)$, $r\in\R$ (see \cite{CGRS2} subsection 2.4).

Next we recall the definition of spectral triple, and summability of spectral triples, from \cite{CGRS2}.
\begin{definition} 
\label{def:ST}
A  semifinite
spectral triple $(\A,\HH,\D)$, relative to $(\cn,\tau)$, is given by a Hilbert space $\HH$, a
$*$-subalgebra  $\A\subset\, \cn$
acting on
$\HH$, and a densely defined unbounded self-adjoint operator $\D$ affiliated
to $\cn$ such that:

0. For all $a\in\A$, $a:\,{\rm dom}\D\to{\rm dom}\D$;

1. $da:=[\D,a]$ is densely defined and extends to a bounded operator in
$\cn$ for all $a\in\A$;

2. $a(1+\D^2)^{-1/2}\in\K(\cn,\tau)$ for all $a\in\A$, where $\K(\cn,\tau)$ is the ideal of $\tau$ compact 
operators in $\cn$ (the norm closure of the algebra generated by finite trace projections).

We say that $(\A,\HH,\D)$ is even if in addition there is a $\mathbb{Z}_2$-grading
such that $\A$ is even and $\D$ is odd. This means there is an operator $\gamma$ such that
$\gamma=\gamma^*$, $\gamma^2={\rm Id}_\cn  
$, $\gamma a=a\gamma$ for all $a\in\A$ and
$\D\gamma+\gamma\D=0$. Otherwise we say that $(\A,\HH,\D)$ is odd.
%
%

A  semifinite spectral triple  $(\A,\cH,\D)$, is said to be finitely summable if
there exists $s>0$ such that for all $a\in\A$, $a(1+\D^2)^{-s/2}\in\cl^1(\cn,\tau)$.
In such a case, we  let
$$
p:=\inf\big\{s>0\,:\,\forall a\in\A, \,\, \tau\big(|a|^{1/2}(1+\D^2)^{-s/2}|a|^{1/2}\big)<\infty
\big\},
$$
and call $p$ the spectral dimension of $(\A,\HH,\D)$.
\end{definition}

It is shown in \cite[Propositions 3.16,\,3.17]{CGRS2} that $\A\subset \B_1(\D,p)$ is a necessary condition
for $(\A,\H,\D)$ to be finitely summable with spectral dimension $p$, and that this condition
is almost sufficient as well.
 
\begin{definition}
\label{delta-phi}
Let $(\A,\HH,\D)$ be a  semifinite spectral triple relative to $(\cn,\tau)$. Then we say that $(\A,\HH,\D)$ is
smoothly summable if 
$
\A\cup[\D,\A]\subset \B_1^\infty(\D,p).
$
\end{definition}

\subsection{An affine space of perturbations}

This subsection proves that the self-adjoint part of $\B_1^\infty(\D,p)$ provides an 
affine space of perturbations of an operator $\D$ suitable for the purpose of studying spectral flow as an integral of a one form. We begin with some preliminary lemmas.

\begin{lemma}\label{V simplification Z} 
For $B\in{\rm OP}^0(\D)_{\rm sa}$, set $\D_B:=\D+B$. 
Then  $(1+\mathcal{D}_B^2)^{s/2}$ belongs to ${\rm OP}^{s}(\D)$ 
for every $s\in\mathbb{R}$.
\end{lemma}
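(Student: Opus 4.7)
The plan is to reduce the statement to a study of the single ``perturbation'' operator $V:=\D_B^2-\D^2=\D B+B\D+B^2$ and then to invoke the multiplicative properties of the pseudodifferential calculus (${\rm OP}^r(\D)\cdot{\rm OP}^s(\D)\subset {\rm OP}^{r+s}(\D)$) together with a resolvent/functional calculus argument.

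First I would check that $V\in {\rm OP}^1(\D)$. Since $\D\in{\rm OP}^1(\D)$ (write $\D=(1+\D^2)^{1/2}\cdot (1+\D^2)^{-1/2}\D$ and note the second factor is bounded and smooth for $\delta$) and $B\in {\rm OP}^0(\D)$, the multiplicative property yields $\D B,B\D\in {\rm OP}^1(\D)$, while $B^2\in {\rm OP}^0(\D)\subset {\rm OP}^1(\D)$. Next I treat the case $s=2$: from $1+\D_B^2=(1+\D^2)+V$ we get
\[
(1+\D^2)^{-1}(1+\D_B^2)=I+(1+\D^2)^{-1}V\in {\rm OP}^0(\D),
\]
because $(1+\D^2)^{-1}V\in {\rm OP}^{-2}(\D)\cdot {\rm OP}^1(\D)\subset {\rm OP}^{-1}(\D)$. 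Hence $(1+\D_B^2)\in {\rm OP}^2(\D)$. A Neumann series argument, or the resolvent identity
\[
(1+\D_B^2)^{-1}=(1+\D^2)^{-1}-(1+\D^2)^{-1}V(1+\D_B^2)^{-1},
\]
then shows $(1+\D_B^2)^{-1}\in {\rm OP}^{-2}(\D)$; here one checks boundedness of all $\delta^n$-iterates of the resolvent by inductively commuting $\delta$ through the identity, each commutator producing an element of ${\rm OP}^{-1}(\D)$ or lower. Integer powers $s=\pm 2k$ then follow by multiplying these elementary cases.

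For general real $s$, I would pass to fractional powers by means of an integral representation, e.g.
\[
(1+\D_B^2)^{s/2}=c_{s,n}\int_0^\infty \mu^{s/2-n}\,(\mu+1+\D_B^2)^{-n}\,d\mu,
\]
valid for $n$ large enough relative to $\Re(s)$, then insert the resolvent expansion obtained above and integrate term by term. Convergence in each seminorm of ${\rm OP}^s(\D)$ comes from the decay of $\|(\mu+1+\D^2)^{-1}\|=O((1+\mu)^{-1})$ combined with the control of $\delta$-iterates of the perturbed resolvent. Negative $s$ are then recovered by taking inverses and using the integer case. The main obstacle is precisely the inductive bookkeeping of the commutators $\delta^j\bigl((\mu+1+\D_B^2)^{-1}\bigr)$: one must show that after commuting $\delta$ through, say by using $\delta\bigl(R_B(\mu)\bigr)=-R_B(\mu)\delta(1+\D_B^2)R_B(\mu)$ and iterating, every resulting term still lies in ${\rm OP}^{-2}(\D)$ with $\mu$-dependent norms whose integrability over $[0,\infty)$ with weight $\mu^{s/2-n}$ is preserved. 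Once this is established, the integral defines an element of ${\rm OP}^s(\D)$ and, by the holomorphic functional calculus, agrees with $(1+\D_B^2)^{s/2}$.
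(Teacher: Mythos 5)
Your proposal follows essentially the same strategy as the paper: handle integer powers by showing $(1+\D_B^2)^{\pm1}(1+\D^2)^{\mp1}$ together with its $\delta$-iterates lie in $\cn$, then pass to fractional powers via an integral formula combined with a one-step resolvent expansion whose $\delta^k$-iterates are controlled inductively. One small point you gloss over: boundedness of $(1+\D_B^2)^{-1}(1+\D^2)$ itself (the base case $k=0$ of the induction) does not follow automatically from the resolvent identity, and a genuine Neumann series would need $\|(1+\D^2)^{-1}V\|<1$; the paper disposes of this cleanly with a symmetry observation, writing $\D=\D_B-B$ so that $(1+\D_B^2)^{-1}(1+\D^2)\in{\rm OP}^0(\D_B)\subset\cn$, and you would need some analogous argument to get the induction started.
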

\begin{proof}
Clearly, $1+\D_B^2\in{\rm OP}^2(\D)$. So by \cite[Proposition 2.30]{CGRS2},
  $(1+\D_B^2)(1+\D^2)^{-1}$
and $(1+\D^2)^{-1}(1+\D_B^2)$ belong to
${\rm OP}^0(\D)$. Next, we prove that $(1+\D_B^2)^{-1}\in{\rm OP}^{-2}(\D)$, which 
is equivalent to $(1+\D_B^2)^{-1}(1+\D^2)\in{\rm OP}^0(\D)$. But we already know, 
by writing $\D=\D_B-B$, that
$(1+\D_B^2)^{-1}(1+\D^2)\in{\rm OP}^0(\D_B)\subset\cn$, so that $(1+\D_B^2)^{-1}(1+\D^2)$ is
bounded.
It remains to show that $\delta^k\big((1+\D_B^2)^{-1}(1+\D^2)\big)\in\cn$, for all $k=1,2,\dots$.
For $k=1$, we have
$$
\delta\big((1+\D_B^2)^{-1}(1+\D^2)\big)=-(1+\D_B^2)^{-1}(1+\D^2)(1+\D^2)^{-1}
\delta(\D_B^2)(1+\D_B^2)^{-1}(1+\D^2),
$$
which is bounded as $(1+\D_B^2)^{-1}(1+\D^2)$ is bounded and 
$(1+\D^2)^{-1}\delta(\D_B^2)\in{\rm OP}^0(\D)$. An easy inductive argument shows that
$\delta^k\big((1+\D_B^2)^{-1}(1+\D^2)\big)$ is bounded 
for every $k\in\N$. Taking products, we deduce from the cases 
$s=\pm1$ that $(1+\mathcal{D}_B^2)^n\in
{\rm OP}^{2n}(\D)$ for every $n\in\mathbb{Z}$. Take now an arbitrary $s\in\mathbb{R}$ and
write $s=n-\alpha$ with $n\in\mathbb Z$ and $\alpha\in(0,1)$. Thus, it remains 
to show that for such $\alpha$,  $(1+\mathcal{D}_B^2)^{-\alpha}$ belongs to
${\rm OP}^{-2\alpha}(\D)$. 
For this, we use the integral formula for fractional powers
$$
(1+\D_B^2)^{-\alpha}
={(\sin(\pi\alpha))}^{-1}\int_0^\infty \lambda^{-\alpha}
(1+\lambda+\D_B^2)^{-1}d\lambda.
$$
Writing 
$
(1+\lambda+\D_B^2)^{-1}=(1+\lambda+\D^2)^{-1}-(1+\lambda+\D^2)^{-1}\big(
\D B+B\D_B\big)(1+\lambda+\D_B^2)^{-1},
$
we arrive at
$$
(1+\D^2)^\alpha(1+\D_B^2)^{-\alpha}={\rm Id}_\cn\\
-\frac{1}{\sin(\pi\alpha)}\int_0^\infty \lambda^{-\alpha}
(1+\D^2)^\alpha(1+\lambda+\D^2)^{-1}\big(\D B+B\D_B\big)(1+\lambda+\D_B^2)^{-1} d\lambda.$$
We estimate the integrand in operator norm using
\begin{align*}
\big\|(1+\D^2)^\alpha(1+\lambda+\D^2)^{-1}
\D B(1+\lambda+\D_B^2)^{-1}\big\|\leq \|B\| (1+\lambda)^{-3/2+\alpha}\\
\big\|(1+\D^2)^\alpha(1+\lambda+\D^2)^{-1}
B\D_B(1+\lambda+\D_B^2)^{-1}\big\|\leq \|B\| (1+\lambda)^{-3/2+\alpha},
\end{align*}
showing the norm-convergence of the integral. Writing next,
\begin{align*}
&\delta\big((1+\D_B^2)^{-\alpha}\big)
=\frac{1}{\sin(\pi\alpha)}\int_0^\infty \lambda^{-\alpha}
\delta\big((1+\lambda+\D_B^2)^{-1}\big)d\lambda\\
&= -\frac{1}{\sin(\pi\alpha)}\int_0^\infty \lambda^{-\alpha}
(1+\lambda+\D_B^2)^{-1}\big(\delta(B)\D_B+\D_B\delta(B)\big)
(1+\lambda+\D_B^2)^{-1}d\lambda,
\end{align*}
we obtain the estimate
\begin{align*}
&\big\|(1+\D^2)^\alpha\delta\big((1+\D_B^2)^{-\alpha}\big)\big\|\\
&
\leq C_{\alpha,y}\int_0^\infty \lambda^{-\alpha}
\big\|(1+\D^2)^\alpha(1+\lambda+\D_B^2)^{-1}\big(\delta(B)\D_B+\D_B\delta(B)\big)
(1+\lambda+\D_B^2)^{-1}\big\|d\lambda\\
&\leq 2 C_{\alpha,y} \|\delta(B)\|\int_0^\infty
\lambda^{-\alpha} (1+\lambda)^{-3/2+\alpha}d\lambda,
\end{align*}
which converges since $\alpha\in(0,1)$. On the basis of this, an easy recursive argument 
shows that $(1+\D^2)^\alpha\delta^k\big((1+\D_B^2)^{-\alpha}\big)$ is bounded for any 
$k\in\N$. This completes the proof.
\end{proof}

We then deduce an immediate corollary.
\begin{cor}
\label{cor:bd}
Let $B\in{\rm OP}^0(\D)_{\rm sa}$. 
Then  $(1+\mathcal{D}_B^2)^s(1+\mathcal{D}^2)^{-s}$ is bounded  
for every $s\in\mathbb{R}$.
\end{cor}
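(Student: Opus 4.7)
The plan is to obtain Corollary 2.3 as an essentially formal consequence of Lemma 2.2, combined with the fact (recalled in Section 2.1) that the automorphism group $\sigma^z$ preserves the spaces ${\rm OP}^r(\D)$; in particular it stabilises ${\rm OP}^0(\D)\subset\cn$.

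First I would apply Lemma 2.2 with $s$ replaced by $2s$ to obtain $(1+\D_B^2)^{s}\in {\rm OP}^{2s}(\D)$. Unpacking the definition ${\rm OP}^{r}(\D)=(1+\D^2)^{r/2}\bigcap_n{\rm dom}\,\delta^n$ then exhibits
$$
T:=(1+\D^2)^{-s}(1+\D_B^2)^{s}\in \bigcap_{n\in\mathbb N}{\rm dom}\,\delta^n={\rm OP}^0(\D)\subset\cn,
$$
where $T$ is interpreted as the bounded closure of the operator initially defined on the common smooth domain $\H_\infty$. The operator of interest then factors as
$$
(1+\D_B^2)^{s}(1+\D^2)^{-s}
=(1+\D^2)^{s}\,T\,(1+\D^2)^{-s}=\sigma^{2s}(T),
$$
which lies in ${\rm OP}^0(\D)\subset\cn$ because $\sigma^{2s}$ stabilises ${\rm OP}^0(\D)$; in particular, it is bounded.

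The only point that needs care, and what I would regard as the main bookkeeping step, is that the two displayed identities are a priori identities of unbounded operators; one should verify them on the common core $\H_\infty=\bigcap_k{\rm dom}\,\D^k$ (which, by Lemma 2.2, is also a core for $(1+\D_B^2)^{\pm s}$) and then pass to bounded closures. Since ${\rm OP}^*(\D)$ is built precisely around such a smooth core, this is routine and no analytic input is required beyond Lemma 2.2 itself.
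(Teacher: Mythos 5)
Your proposal is correct, and the bookkeeping you flag (verifying identities on $\H_\infty$ and closing) is indeed all that's needed. The paper itself offers no written proof — the corollary is simply stated as an immediate consequence of Lemma~\ref{V simplification Z} — so the most one can say is that your route is one of several one-line deductions. The most economical of these, arguably, bypasses $\sigma^{2s}$ entirely: Lemma~\ref{V simplification Z} with $2s$ in place of $s$ gives $T:=(1+\D^2)^{-s}(1+\D_B^2)^{s}\in{\rm OP}^0(\D)$, and since ${\rm OP}^0(\D)$ is a $*$-algebra (as recalled in Section~2.1) one has $T^*=(1+\D_B^2)^{s}(1+\D^2)^{-s}\in{\rm OP}^0(\D)\subset\cn$ directly, with no need to invoke the automorphism group. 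Another equally short variant swaps the roles of $\D$ and $\D_B$ (legitimate since $\D=\D_B-B$ with $-B\in{\rm OP}^0(\D_B)_{\rm sa}$, as already exploited in the lemma's proof) and applies the lemma with $-2s$. Your $\sigma^{2s}$-conjugation argument is perfectly valid and uses only facts already established, but it imports slightly more machinery than necessary for a statement the authors regard as immediate; it does have the virtue of making the ``pass to bounded closure on $\H_\infty$'' step completely explicit.
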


We have next our first preliminary result concerning affine spaces of perturbations.

\begin{prop}\label{affine1}
Let $B\in{\rm OP}^0(\D)_{\rm sa}$ and $p\geq 1$. Then 
we have $\mathcal{B}_2(\mathcal{D}_B,p)=\mathcal{B}_2(\mathcal{D},p)$ (resp.
$\mathcal{B}_1(\mathcal{D}_B,p)=\mathcal{B}_1(\mathcal{D},p)$) with equivalent 
$\mathcal{Q}_n$-seminorms (resp. $\PP_n$-seminorms). In particular, $\E_2:=\D+\B_2(\D,p)$ 
(resp. $\E_1:=\D+\B_1(\D,p)$) is an affine  sub-space of ${\rm OP}^1(\D)$,
whose Fr\'{e}chet topology is independent of the base-point.
\end{prop}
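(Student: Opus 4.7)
The central claim is that $\B_2(\D_B,p)=\B_2(\D,p)$ with equivalent $\Q_n$-seminorms, and analogously $\B_1(\D_B,p)=\B_1(\D,p)$ with equivalent $\PP_n$-seminorms. Once these are established, the affine-space and base-point-independence assertions follow formally. My plan begins by strengthening Corollary \ref{cor:bd}: the statement $(1+\D_B^2)^s(1+\D^2)^{-s}\in\cn$ for every $s\in\R$, combined with taking operator adjoints and replacing $s$ by $-s$, shows that all four products
\begin{equation*}
(1+\D_B^2)^{\pm s}(1+\D^2)^{\mp s},\qquad (1+\D^2)^{\pm s}(1+\D_B^2)^{\mp s}
\end{equation*}
are bounded for every $s\in\R$.

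Next I unpack the definition of $\B_2(\D,p)$: a bounded $T\in\cn$ lies in $\B_2(\D,p)$ if and only if both $T(1+\D^2)^{-s/4}$ and $(1+\D^2)^{-s/4}T$ lie in $\LL^2(\cn,\tau)$ for every $s>p$, and then $\Q_n(T)^2$ equals $\|T\|^2$ plus the squared $\LL^2$-norms at $s=p+1/n$. Given $T\in\B_2(\D,p)$ and $s>p$, the factorisation
\begin{equation*}
T(1+\D_B^2)^{-s/4}\;=\;\bigl[T(1+\D^2)^{-s/4}\bigr]\,\bigl[(1+\D^2)^{s/4}(1+\D_B^2)^{-s/4}\bigr]
\end{equation*}
writes the left-hand side as an $\LL^2$ operator times a bounded one, so it is itself $\LL^2$; the symmetric factorisation on the left handles $(1+\D_B^2)^{-s/4}T$. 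These estimates yield both $\B_2(\D,p)\subset\B_2(\D_B,p)$ and a seminorm bound $\Q_n^{\D_B}(T)\le C_n\,\Q_n^{\D}(T)$, and the reverse inclusion and inequality follow from the symmetric argument using the other two members of the fourfold boundedness.

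The equality $\B_1(\D_B,p)=\B_1(\D,p)$ is then immediate from $\B_1=\B_2\cdot\B_2$ as spans; the $\PP_n$-equivalence follows from the $\Q_n$-equivalence via the infimum defining $\PP_n$. Finally, the affine-space and base-point-independence statements for $\E_2$ and $\E_1$ are formal consequences: the direction space $\B_q(\D,p)$ coincides with $\B_q(\D_B,p)$ as Fr\'echet spaces, so changing the reference operator from $\D$ to $\D_B$ gives the same topology; containment of $\E_q$ in $\mathrm{OP}^1(\D)$ uses that $\D\in\mathrm{OP}^1(\D)$ together with $\B_q(\D,p)\subset\cn$ acting as a bounded perturbation of order one. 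The single non-routine step I foresee is the fourfold boundedness upgrade of Corollary \ref{cor:bd}; everything afterwards is a clean application of this factorisation trick and bookkeeping with seminorms.
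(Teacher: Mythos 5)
Your argument is correct and follows essentially the same path as the paper: the paper's proof shows the weights $\vf_s^{\D_B}$ and $\vf_s^{\D}$ are mutually dominated by inserting factors of $(1+\D^2)^{\pm s/4}$ and appealing to Corollary \ref{cor:bd} (together with the adjoint and inverse-exponent variants you make explicit), which after setting $T=|S|^2$ is precisely the factorisation estimate you use for $\|S(1+\D^2)^{-s/4}\|_2$, so the two arguments differ only in phrasing (weights on positives versus $\LL^2$-norms). The one soft spot is the parenthetical justification of $\E_q\subset{\rm OP}^1(\D)$: a bounded operator in $\cn$ need not lie in ${\rm OP}^1(\D)$, since that would require $(1+\D^2)^{-1/2}T$ to belong to $\bigcap_n{\rm dom}\,\delta^n$, so the appeal to ``$\B_q(\D,p)\subset\cn$ as a bounded order-one perturbation'' does not substantiate the containment --- but the paper's own proof is silent on this point too, so this does not distinguish your argument from the intended one.
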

\begin{proof} 
Let $T\in\cn_+$ and $s>0$. We have by Corollary \ref{cor:bd} 
\begin{align*}
&\tau\big((1+\D_B^2)^{-s/4}T(1+\D^2_B)^{-s/4}\big)\\
&\qquad=\tau\big((1+\D_B^2)^{-s/4}(1+\D^2)^{s/4}(1+\D^2)^{-s/4}T(1+\D^2)^{-s/4}(1+\D^2)^{s/4}(1+\D^2_B)^{-s/4}\big)\\
&\qquad\leq \Vert (1+\D^2)^{-s/4}(1+\D^2_B)^{-s/4}\Vert^2\ \tau\big((1+\D^2)^{-s/4}T(1+\D^2)^{-s/4}\big).
\end{align*}
Similarly, we obtain
$$
\tau\big((1+\D^2)^{-s/4}T(1+\D^2)^{-s/4}\big)\leq 
\Vert (1+\D_B^2)^{-s/4}(1+\D^2)^{-s/4}\Vert^2\
\big((1+\D_B^2)^{-s/4}T(1+\D^2_B)^{-s/4}\big).
$$
Thus, the weights $\vf_s$ defined with $\D$ or with $\D_B$ are equivalent.
Substituting $s=p+4/n$ and comparing with the definition of the norms $\Q_n$ 
and $\PP_n$ completes the proof.
\end{proof}

To state an analogous result in the smooth case, namely when we use $\B_2^\infty$ and $\B_1^\infty$, 
we will compare 
the operators $L$ given in \eqref{LR}, associated with $\D$ and $\D_B$.
We arrive now at the main technical result.

\begin{prop}
\label{affine3}
Let $B\in {\rm OP}^0(\D)_{\rm sa}$. Then $ {\rm OP}^0(\D_B)= {\rm OP}^0(\D)$ and 
$ \B^\infty_1(\D_B,p)=:{\rm OP}^0_0(\D_B)= {\rm OP}^0_0(\D):=\B_1^\infty(\D,p)$ with equivalent topologies.  
In particular, $\D+{\rm OP}^0(\D)$  is an affine Fr\'{e}chet sub-space of ${\rm OP}^1(\D)$, 
whose topology is independent of the base-point.
\end{prop}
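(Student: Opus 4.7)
The approach is to compare the derivations $\delta_B$ and $\delta$ directly, exploiting that by Proposition \ref{affine1} the underlying ideals $\B_1(\D_B,p)$ and $\B_1(\D,p)$ already agree with equivalent seminorms. The crucial observation is that if we set
\[
R := (1+\D_B^2)^{1/2} - (1+\D^2)^{1/2},
\]
then $\delta_B(T) - \delta(T) = [R,T]$. Thus the whole proof reduces to showing $R \in {\rm OP}^0(\D)$. Granting this, $\operatorname{ad}_R := [R,\,\cdot\,]$ is a continuous inner derivation on the Fr\'echet $*$-algebra ${\rm OP}^0(\D)$, and restricts to a continuous derivation on the two-sided $*$-ideal $\B_1^\infty(\D,p)$ (here one uses that $\B_1(\D,p) = \B_2(\D,p)^2$ is a two-sided $*$-ideal in $\cn$, being generated by the intersection of left ideals $\cn_{\vf_s}^{1/2}$ with their adjoints). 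Expanding $\delta_B^n = (\delta + \operatorname{ad}_R)^n$ via the Leibniz rule exhibits each $\delta_B^n(T)$ as a finite sum of nested commutators in the $\delta^i(R)$'s and $\delta^j(T)$'s with $i + j \leq n$. For $T \in {\rm OP}^0(\D)$ (resp.\ $\B_1^\infty(\D,p)$) all summands remain bounded (resp.\ in $\B_1(\D,p)$), giving continuous domination of the $\PP_{n,l}^B$-seminorms by the $\PP_{n,l}$-seminorms; applying the same argument to $(\D_B,-B)$ in place of $(\D,B)$ yields the reverse inequalities, and hence the equality of spaces with equivalent Fr\'echet topologies.

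The affine-space statement is then automatic: given any $B_0 \in {\rm OP}^0(\D)_{\rm sa}$ and setting $\D' := \D + B_0$, we obtain ${\rm OP}^0(\D') = {\rm OP}^0(\D)$ with equivalent topology, so $\D' + {\rm OP}^0(\D')_{\rm sa}$ and $\D + {\rm OP}^0(\D)_{\rm sa}$ coincide as affine Fr\'echet subspaces of ${\rm OP}^1(\D)$.

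The main obstacle is the technical claim $R \in {\rm OP}^0(\D)$. Lemma \ref{V simplification Z} already yields $(1+\D_B^2)^{1/2} \in {\rm OP}^1(\D)$, hence $R \in {\rm OP}^1(\D)$; the real task is to improve this by one order. I would exploit the algebraic decomposition
\[
R = (1+\D_B^2)\bigl[(1+\D_B^2)^{-1/2} - (1+\D^2)^{-1/2}\bigr] + (\D B + B\D + B^2)(1+\D^2)^{-1/2},
\]
treating the first summand with the fractional-power integral representation
\[
(1+A^2)^{-1/2} = \pi^{-1}\int_0^\infty \lambda^{-1/2}(1+\lambda+A^2)^{-1}\,d\lambda
\]
used in Lemma \ref{V simplification Z}, and the second by direct ${\rm OP}^*$-calculus (noting that $\D(1+\D^2)^{-1/2}$, being a function of $\D$, is annihilated by $\delta$ and so lies in ${\rm OP}^0(\D)$, and that the factor $\D B + B\D + B^2 = \D_B B + B\D_B - B^2$ can be rewritten so that one $\D_B$ pairs with $(1+\D_B^2)^{-1/2}$ to produce a bounded operator). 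The iterated boundedness of $\delta^k(R)$ is then established by differentiating under the integral and bootstrapping, with the required uniform estimates supplied by Corollary \ref{cor:bd} and the standard norm bound $\|\D(1+\lambda+\D^2)^{-1}\| \leq \tfrac{1}{2}(1+\lambda)^{-1/2}$.
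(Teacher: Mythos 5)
Your approach differs from the paper's: you reduce everything to the inner perturbation $\delta_B=\delta+\operatorname{ad}_R$ with $R=(1+\D_B^2)^{1/2}-(1+\D^2)^{1/2}$, and so the entire burden falls on proving $R\in{\rm OP}^0(\D)$. The paper instead compares the operators $L$ and $L_B$ (not $\delta,\delta_B$) via the identities \eqref{LLB}, in which the \emph{ratio} $(1+\D_B^2)^{-1/2}(1+\D^2)^{1/2}$ (and its reciprocal) appears but never the \emph{difference} $R$. Since Lemma \ref{V simplification Z} directly gives that these ratios lie in ${\rm OP}^0(\D)$, the paper's argument sidesteps the boundedness of $R$ entirely. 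This is not a cosmetic difference: $(1+\D_B^2)^{1/2}(1+\D^2)^{-1/2}$ being bounded (Corollary \ref{cor:bd}) only yields $R\in{\rm OP}^1(\D)$, and the improvement to ${\rm OP}^0(\D)$ is strictly stronger information than anything the paper uses.

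The sketch you offer for the key claim $R\in{\rm OP}^0(\D)$ does not close the gap. The second summand in your decomposition is fine, but for the first summand the fractional-power representation gives
$$
(1+\D_B^2)\big[(1+\D_B^2)^{-1/2}-(1+\D^2)^{-1/2}\big]
=-\frac1\pi\int_0^\infty\!\!\lambda^{-1/2}(1+\D_B^2)(1+\lambda+\D^2)^{-1}\big(\D B+B\D_B\big)(1+\lambda+\D_B^2)^{-1}\,d\lambda,
$$
and the best operator-norm bounds available — $(1+\D_B^2)(1+\lambda+\D^2)^{-1}$ uniformly bounded, and $\|(\D B+B\D_B)(1+\lambda+\D_B^2)^{-1}\|\lesssim(1+\lambda)^{-1/2}$ via the bound you cite — give an integrand of size $\lambda^{-1/2}(1+\lambda)^{-1/2}$, for which the integral diverges logarithmically at infinity. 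Equivalently, writing $R=\pi^{-1}\int_0^\infty\lambda^{1/2}(\lambda+1+\D^2)^{-1}(\D B+B\D_B)(\lambda+1+\D_B^2)^{-1}d\lambda$ directly, the integrand norm decays only like $(1+\lambda)^{-3/2}$ against the weight $\lambda^{1/2}$, again divergent. The integral does represent a bounded operator (by analogy with classical $\Psi$DO symbol calculus), but this cannot be seen by estimating the norm of the integrand pointwise in $\lambda$; some genuine cancellation or a different integral representation would be needed. Absent such an argument, the claim $R\in{\rm OP}^0(\D)$ is not established, and with it the whole reduction $\delta_B^n=(\delta+\operatorname{ad}_R)^n$ is ungrounded. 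You would do better to follow the paper and compare $L_B$ with $L$, where the needed bounded coefficients are exactly what Lemma \ref{V simplification Z} supplies. (As a minor point, you also cite $\B_1(\D,p)$ as a two-sided ideal in $\cn$; the ideal property actually used is that $\B_1^\infty(\D,p)={\rm OP}^0_0(\D)$ is an ideal in ${\rm OP}^0(\D)$.)
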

\begin{proof}
We need first to prove that $\cap_{k\in\mathbb N}{\rm dom}\,\delta^k=\cap_{k\in\mathbb N}{\rm dom}\,
\delta_B^k$ where $\delta_B(\cdot):=[(1+\D_B^2)^{1/2},\cdot]$.  Using $\cap_{k\in\mathbb N}{\rm dom}\,\delta^k=\cap_{k\in\mathbb N}{\rm dom}\,L^k$, see \cite{CPRS2} for a proof, 
we see that we equivalently  need to prove that 
$\cap_{k\in\mathbb N}{\rm dom}\,L^k=\cap_{k\in\mathbb N}{\rm dom}\,L_B^k$,
where $L_B$ is the linear operator defined in \eqref{LR} with $\D_B$ instead of $\D$.
For this, we observe the relation
\begin{align*}
\D_B^2-\D^2=\D B+B\D+B^2=(1+\D^2)^{1/2}( \D(1+\D^2)^{-1/2} B+\sigma^{-1}(B)
\D{(1+\D^2)^{-1/2}})+B^2,
\end{align*}
where $\sigma$ is the one-parameter complex group of automorphisms (for $\D$) given in \eqref{sigma}.
Defining the transformation 
$
T:B\mapsto  \D{(1+\D^2)^{-1/2}} B+\sigma^{-1}(B)
\D{(1+\D^2)^{-1/2}}+B^2,
$
we see that $T$ maps ${\rm OP}^0(\D)$ to itself (and similarly for ${\rm OP}^0_0(\D)=
\B_1^\infty(\D,p)$). 
Moreover,
we have the following relations between the maps $L$ and $L_B$:
\begin{align}
\label{LLB}
L_B(\cdot)&=(1+\D^2_B)^{-1/2}(1+\D^2)^{1/2}\big(L(\cdot)
+[T(B),\cdot]\big)+(1+\D^2_B)^{-1/2}\delta(\cdot) T(B),
\nonumber\\
L(\cdot)&=(1+\D^2)^{-1/2}(1+\D^2_B)^{1/2}\big(L_B(\cdot)-[T(B),\cdot]\big)-(1+\D^2)^{-1/2}\delta_B(\cdot) T(B),
\end{align}
By Lemma \ref{V simplification Z}, we have that $(1+\D^2_B)^{-1/2}(1+\D^2)^{1/2}$ 
and $(1+\D^2)^{-1/2}(1+\D^2_B)^{1/2}$ belong to ${\rm OP}^0(\D)$ and by the replacement
$(\D,B)\mapsto(\D_B,-B)$, they also belong to ${\rm OP}^0(\D_B)$. 
Now, the first equation of \ref{LLB} shows that $B$ belongs to the domain of $L_B$. 
By an iterative use of this equation, we deduce that $B\in \cap_{k\in\mathbb N}{\rm dom}\,L_B^k
={\rm OP}^0(\D_B)$. Last, writing $\D=\D_B-B$, Lemma
\ref{V simplification Z} applied to $\D_B$, shows that  $T(B)$ also belongs to  ${\rm OP}^0(\D_B)$.
This is clearly enough
to conclude that $\cap_{k\in\mathbb N}{\rm dom}\,L^k=\cap_{k\in\mathbb N}{\rm dom}\,L_B^k$.
That $\B_1^\infty(\D_B,p)=\B_1^\infty(\D,p)$ now follows since $\B_1^\infty(\D,p)$ is an
ideal in ${\rm OP}^0(\D)$ and that all the transformations used above also preserve 
$\B_1^\infty(\D,p)$. 
\end{proof}

\begin{cor}
Let $B\in {\rm OP}^0(\D)_{\rm sa}$ and $r\in\R$. Then $ {\rm OP}^r(\D_B)= {\rm OP}^r(\D)$ and
$ {\rm OP}^r_0(\D_B)= {\rm OP}^r_0(\D)$.
\end{cor}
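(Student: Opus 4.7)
The plan is to leverage the already-proved Proposition \ref{affine3} (which handles the $r=0$ case) together with Lemma \ref{V simplification Z} (which controls the complex powers of $1+\D_B^2$) and the composition rule ${\rm OP}^r(\D)\,{\rm OP}^t_0(\D),\;{\rm OP}^t_0(\D)\,{\rm OP}^r(\D)\subset {\rm OP}^{r+t}_0(\D)$ from \cite[Lemma 1.31]{CGRS2}. The strategy is to factor any operator in ${\rm OP}^r(\D_B)$ through $(1+\D^2)^{r/2}$ by inserting an invertible order-zero intertwiner between the two complex powers.

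First I would observe the symmetric statement of Lemma \ref{V simplification Z}: since $\D = \D_B + (-B)$ and $-B \in {\rm OP}^0(\D_B)_{\rm sa}$ (by Proposition \ref{affine3}), the lemma applies to the pair $(\D_B,-B)$ as well, yielding $(1+\D^2)^{s/2} \in {\rm OP}^s(\D_B)$ for every $s\in\R$. Combined with the original statement, this gives $(1+\D_B^2)^{r/2} \in {\rm OP}^r(\D)$ and $(1+\D^2)^{r/2} \in {\rm OP}^r(\D_B)$ for all $r$.

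Next I would form the intertwiner $U_r := (1+\D^2)^{-r/2}(1+\D_B^2)^{r/2}$. Its factors lie in ${\rm OP}^{-r}(\D)$ and ${\rm OP}^r(\D)$ respectively, so by the composition rule (applied in the purely regular setting, which is standard) $U_r \in {\rm OP}^0(\D)$; the analogous operator $V_r := (1+\D_B^2)^{-r/2}(1+\D^2)^{r/2}$ lies in ${\rm OP}^0(\D_B) = {\rm OP}^0(\D)$. Now given $T \in {\rm OP}^r(\D_B)$, write $T = (1+\D_B^2)^{r/2} S$ with $S \in {\rm OP}^0(\D_B) = {\rm OP}^0(\D)$ by Proposition \ref{affine3}; then
\[
T = (1+\D^2)^{r/2}\, U_r\, S \in (1+\D^2)^{r/2}\,{\rm OP}^0(\D) = {\rm OP}^r(\D),
\]
since $U_r S \in {\rm OP}^0(\D)$. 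The reverse inclusion ${\rm OP}^r(\D) \subset {\rm OP}^r(\D_B)$ follows by the same argument with the roles of $\D$ and $\D_B$ swapped (using $V_r$ in place of $U_r$).

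For the tame version, exactly the same factorisation applies: given $T = (1+\D_B^2)^{r/2} S_0$ with $S_0 \in \B_1^\infty(\D_B,p) = \B_1^\infty(\D,p)$ (Proposition \ref{affine3}), we have $T = (1+\D^2)^{r/2}\,U_r\,S_0$, and $U_r S_0 \in {\rm OP}^0(\D)\cdot \B_1^\infty(\D,p) \subset \B_1^\infty(\D,p)$ because $\B_1^\infty(\D,p) = {\rm OP}^0_0(\D)$ is a two-sided ideal in ${\rm OP}^0(\D)$ (noted just after Definition \ref{op0}). This places $T$ in ${\rm OP}^r_0(\D)$, and symmetry finishes the proof. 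I expect no genuine obstacle here: the whole argument is a bookkeeping exercise in the pseudodifferential calculus, and the only point that requires care is verifying that the composition rule applies to the factors of $U_r$, which is covered by the regular (non-tame) part of \cite[Lemma 1.31]{CGRS2}.
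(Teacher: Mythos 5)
Your proof is correct and follows essentially the same route as the paper: both replace ${\rm OP}^0(\D_B)$ (resp.\ $\B_1^\infty(\D_B,p)$) by ${\rm OP}^0(\D)$ via Proposition \ref{affine3}, then absorb the order-zero intertwiner $(1+\D^2)^{-r/2}(1+\D_B^2)^{r/2}$ using Lemma \ref{V simplification Z}, and finish by symmetry under $(\D,B)\leftrightarrow(\D_B,-B)$. The only cosmetic difference is that you invoke the composition rule to place $U_r$ in ${\rm OP}^0(\D)$, whereas the paper reads this off directly from Lemma \ref{V simplification Z} together with the definition of ${\rm OP}^r(\D)$.
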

\begin{proof}
By definition and Proposition \ref{affine3},
$$
{\rm OP}^r(\D_B)={\rm OP}^0(\D_B)(1+\D_B^2)^{-r/2}={\rm OP}^0(\D)(1+\D_B^2)^{-r/2}
={\rm OP}^r(\D)(1+\D^2)^{r/2}(1+\D_B^2)^{-r/2}.
$$
By Lemma  \ref{V simplification Z}, $(1+\D^2)^{r/2}(1+\D_B^2)^{-r/2}\in{\rm OP}^0(\D)$,
thus, 
$$
{\rm OP}^r(\D_B)\subset {\rm OP}^r(\D)\cdot{\rm OP}^0(\D)\subset {\rm OP}^r(\D).
$$
Reversing the role of $(\D,B)$ and $(\D_B,-B)$, we get the second inclusion. The statements
about ${\rm OP}^r_0(\D)$ are proved the same way.
\end{proof}

We require one more technical estimate for later use.

\begin{lemma}
\label{lem4}
Let $\D$ be an unbounded self-adjoint operator affiliated 
with a von Neumann algebra $\cn$ and let $B\in{\rm OP}^0(\D)$.
Then for any numbers  $\rho>0$ and  $s\geq2\|B\|$, the operator 
$\big(1+\D^2\big)^\rho(1+\D^2+sB+s^2\big)^{-\rho}$ is bounded with
$
\sup_{s\geq 2\|B\|}\big\|\big(1+\D^2\big)^\rho\big(1+\D^2+sB+s^2\big)^{-\rho}\big\|<\infty\,.
$
\end{lemma}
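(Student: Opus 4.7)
The plan is to prove the uniform bound in three stages: first establishing invertibility of $A_s := 1+\D^2+sB+s^2$ with a quantitative estimate, then treating $\rho\in(0,1]$ via the Balakrishnan integral representation combined with a resolvent-subtraction trick, and finally extending to all $\rho>0$ by iteration.

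First I would set $C_s:=1+\D^2+s^2$, which is positive self-adjoint and affiliated to $\cn$ with $C_s\geq 1+s^2$. The factorisation $A_s=C_s(I+sC_s^{-1}B)$ combined with $\|sC_s^{-1}B\|\leq s\|B\|/(1+s^2)\leq 1/2$ (since $s\geq 2\|B\|$) gives by Neumann series
$$\|(\mu I+A_s)^{-1}\|\leq \frac{2}{\mu+1+s^2}\qquad\text{for all }\mu\geq 0,$$
in particular $\|A_s^{-1}\|\leq 2/(1+s^2)$. The case $\rho=1$ is then immediate from the algebraic identity $(1+\D^2)A_s^{-1}=I-(sB+s^2)A_s^{-1}$ together with $\|(sB+s^2)A_s^{-1}\|\leq (s\|B\|+s^2)\cdot 2/(1+s^2)\leq 3$.

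For $\rho\in(0,1)$, the Balakrishnan formula and the resolvent identity yield
\begin{align*}
(1+\D^2)^\rho A_s^{-\rho}&=(1+\D^2)^\rho C_s^{-\rho}\\
&\quad-\frac{\sin(\pi\rho)}{\pi}\int_0^\infty\mu^{-\rho}\,(1+\D^2)^\rho(\mu+A_s)^{-1}(sB)(\mu+C_s)^{-1}\,d\mu.
\end{align*}
The first term has norm at most $1$ by joint functional calculus in $\D$ (using $1+\D^2\leq C_s$). To bound the integral I would maximise $(1+x)^\rho/(\mu+1+s^2+x)$ over $x\geq 0$ to obtain $\|(1+\D^2)^\rho(\mu+C_s)^{-1}\|\leq C_\rho(\mu+s^2)^{\rho-1}$ with $C_\rho=\rho^\rho(1-\rho)^{1-\rho}$, and then combine with the Neumann bound on $(\mu+A_s)^{-1}$ to get
$$\|\text{integrand}\|\leq 2C_\rho s\|B\|\cdot\mu^{-\rho}(\mu+s^2)^{\rho-1}(\mu+1+s^2)^{-1}.$$
After substituting $\mu=s^2 u$ and using $s^2u+1+s^2\geq s^2(u+1)$, the remaining integral is dominated by $s^{-2}B(1-\rho,1)=s^{-2}/(1-\rho)$, so the second term is bounded by $2C_\rho\sin(\pi\rho)\|B\|/(\pi s(1-\rho))\leq \sin(\pi\rho)C_\rho/(\pi(1-\rho))$, uniformly in $s\geq 2\|B\|$.

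Finally, for general $\rho>0$, I would write $\rho=n+\alpha$ with $n\in\mathbb N$, $\alpha\in[0,1)$, and iterate: the factorisation of $(1+\D^2)^\rho A_s^{-\rho}$ is reduced via commutator estimates for $[(1+\D^2)^k,B]$ (which lie in suitable ${\rm OP}^{2k-1}(\D)$ since $B\in{\rm OP}^0(\D)$), together with the integer case and the fractional case just established. The main obstacle is the fractional estimate: the pure functional-calculus bound on $(1+\D^2)^\rho(\mu+C_s)^{-1}$ decays only as $\mu^{\rho-1}$, so that $\mu^{-\rho}$ times it is not integrable at infinity; the essential trick is the resolvent subtraction of $C_s^{-\rho}$, which produces the extra factor $(\mu+C_s)^{-1}\sim\mu^{-1}$ needed to restore integrability at both endpoints.
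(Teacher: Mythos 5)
Your argument for $0<\rho\le 1$ is correct, but it is a genuinely different route from the paper's. You compute $(1+\D^2)^\rho(1+\D^2+sB+s^2)^{-\rho}$ via the Balakrishnan integral representation together with a resolvent-subtraction, using the Neumann-series bound on $(\mu+1+\D^2+sB+s^2)^{-1}$ and the elementary functional-calculus estimate $\|(1+\D^2)^\rho(\mu+1+\D^2+s^2)^{-1}\|\le C_\rho(\mu+s^2)^{\rho-1}$. After the substitution $\mu=s^2u$ the factors of $s$ cancel and the Beta integral is finite, giving a uniform bound. The paper instead argues abstractly: from the explicit $\rho=1$ bound it deduces the two-sided operator inequality $b\le(1+\D^2)(1+\D^2+sB+s^2)^{-2}(1+\D^2)\le 16$, conjugates by $(1+\D^2)^{-1}$, applies operator monotonicity of $x\mapsto x^\rho$ for $\rho\in(0,1]$, and conjugates back to get $\|C_\rho\|\le 4^\rho$. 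Your approach is more computational but makes the $s$-dependence completely transparent; the paper's is shorter and avoids integral estimates, but uses operator monotonicity as a black box. One small inaccuracy: you write the resolvent identity as $(\mu+A_s)^{-1}(sB)(\mu+C_s)^{-1}$; to apply your functional-calculus estimate directly you want the other form $(\mu+C_s)^{-1}(sB)(\mu+A_s)^{-1}$, which places $(1+\D^2)^\rho$ next to the commuting factor $(\mu+C_s)^{-1}$ (alternatively insert $(\mu+C_s)^{-1}(\mu+C_s)$ and use $\|(\mu+C_s)(\mu+A_s)^{-1}\|\le 2$).

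The real gap is the extension to $\rho>1$. You only establish $\rho=1$ explicitly and then gesture at a decomposition $\rho=n+\alpha$ with commutator estimates for $[(1+\D^2)^k,B]$. This is exactly where the uniform-in-$s$ claim becomes delicate, because commuting $(1+\D^2)^k$ through $A_s^{-1}$ produces terms like $[(1+\D^2)^k,sB]$ which carry explicit powers of $s$ that must be absorbed by the decay of $A_s^{-1}$. Naive factorisations of the form $(1+\D^2)^n\,T_\alpha\,A_s^{-n}$ with $T_\alpha=(1+\D^2)^\alpha A_s^{-\alpha}$ do \emph{not} obviously yield a uniform bound, since conjugating $T_\alpha$ by $(1+\D^2)^n$ again brings in $s$-dependent commutators. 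The paper handles this by a clean $\rho\to\rho+1$ induction which isolates precisely one commutator $[(1+\D^2)^\rho,B]=(\sigma^{2\rho}(B)-B)(1+\D^2)^\rho$, writes the resulting term as $-C_1(\sigma^{2\rho}(B)-B)C_\rho\cdot s(1+\D^2+sB+s^2)^{-1}$, and then uses the elementary but crucial observation that $\|s(1+\D^2+sB+s^2)^{-1}\|$ is bounded uniformly in $s\ge 2\|B\|$. Your sketch identifies the right kind of tool (commutators and the order calculus) but does not supply this $s$-bookkeeping, which is the heart of the matter; as written, the extension to $\rho>1$ is not established.
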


\begin{proof}
We let $C_\rho=(1+\D^2)^\rho(1+\D^2+sB+s^2)^{-\rho}$. For $\rho=0$, $C_\rho$ is bounded, and 
also for $\rho=1$, we have 
$$
\big(1+\D^2\big)\big(1+\D^2+sB+s^2\big)^{-1}=1-(sB+s^2)\big(1+\D^2+sB+s^2\big)^{-1}\,,
$$
and thus for $s\geq2\|B\|$, we obtain
$$
\Vert C_1\Vert=\big\|\big(1+\D^2\big)\big(1+\D^2+sB+s^2\big)^{-1}\big\|\leq1+(s\|B\|+s^2
) (1+s^2/2)^{-1}\leq 4\,.
$$
For $0<\rho<1$ we observe that $C_1$ is invertible, and so there is some positive constant $b=b(s)$ such
that
$$
0<b\leq (1+\D^2)(1+\D^2+sB+s^2)^{-2}(1+\D^2)\leq 16.
$$
Conjugating by $(1+\D^2)^{-1}$ and raising to the power $\rho$ yields, by operator monotonicity,
$$
b^{\rho}(1+\D^2)^{-2\rho}\leq (1+\D^2+sB+s^2)^{-2\rho}\leq (16)^\rho(1+\D^2)^{-2\rho},
$$
and conjugating by $(1+\D^2)^\rho$ yields
$$
b^{\rho}\leq (1+\D^2)^{\rho}(1+\D^2+sB+s^2)^{-2\rho}(1+\D^2)^{\rho}\leq (16)^\rho.
$$
Hence $\Vert C_\rho\Vert\leq 4^\rho$ independent of $s$ whenever $0\leq \rho\leq1$.

So, let us assume that the result hold for some given $\rho$. Then for $C_{\rho+1}$ we find
\begin{align*}
C_{\rho+1}&=\big(1+\D^2\big)^{\rho+1}\big(1+\D^2+sB+s^2\big)^{-\rho-1}\\
&=\big(1+\D^2\big)\big(1+\D^2+sB+s^2\big)^{-1}
\big(1+\D^2\big)^{\rho}\big(1+\D^2+sB+s^2\big)^{-\rho}\\
&\qquad\qquad\qquad
+\big(1+\D^2\big)\Big[\big(1+\D^2\big)^\rho,(1+\D^2+sB+s^2\big)^{-1}\Big]\big(1+\D^2+sB+s^2\big)^{-\rho}\\
&=C_1C_\rho-s(1+\D^2)(1+\D^2+sB+s^2)^{-1}[(1+\D^2)^\rho,B](1+\D^2+sB+s^2)^{-\rho-1}\\
&=C_1C_\rho-C_1(\sigma^{2\rho}(B)-B)(1+\D^2)^{\rho}(1+\D^2+sB+s^2)^{-\rho}\,s(1+\D^2+sB+s^2)^{-1}\\
&=C_1C_\rho-C_1(\sigma^{2\rho}(B)-B)C_\rho\,s(1+\D^2+sB+s^2)^{-1}.
\end{align*}
Now it is straightforward to show that $\Vert s(1+\D^2+sB+s^2)^{-1}\Vert$ is bounded 
independently of $s\geq 2\Vert B\Vert$, so if $C_\rho$ is bounded uniformly in $s$, so too is $C_{\rho +1}$. 
This completes the proof.
\end{proof}

\section{Nonunital Phillips-Raeburn examples}
\label{sec:PR}

In this Section we prove that the examples studied by Phillips and Raeburn in 
\cite{PR} give rise to smoothly summable semifinite spectral triples. 
We begin by recalling the construction in \cite{PR}
 in order to set our notation and assumptions.
 To this end,
$A$ will denote a $C^*$-algebra (usually non-unital) with a fixed faithful, 
norm-lower semi-continuous, densely defined trace, $\tau$, 
which is invariant under a strongly continuous, isometric action of the reals, $\alpha: \R\to {\rm Aut}(A)$. 
We let $A_\tau$ denote the dense ideal of trace-class elements in the $C^*$-algebra $A$, that is
$$
A_\tau=\{a\in A\,|\,\tau(|a|)<\infty\}={\rm span}\{a\in A_+\,|\,\tau(a)<\infty\}.
$$ 
We define a Banach-$*$-algebra norm on $A_\tau$ via $\|a\|_\tau = \|a\| + \tau(|a|):=\|a\|+\|a\|_1$, 
and observe that the action $\alpha$ restricts to a strongly
continuous action of $\R$ as isometric $*$-automorphisms of $A_\tau$.

Now $\alpha$ determines densely defined derivations, $\partial$ and $\partial_\tau$ on $A$ and $A_\tau$ respectively, 
given by the formulas
$$
\partial(a)=\lim_{t\to 0}\frac{\alpha_t(a) - a}{t} \;\;a\in A\;\;{\rm and}\;\; 
\partial_\tau(a)=\lim_{t\to 0}\frac{\alpha_t(a) - a}{t} \;\;a\in A_\tau,
$$
where the limit in each case is taken with respect to the complete norm topologies of the 
respective algebras. Moreover, ${\rm dom}(\partial_\tau)\subseteq
{\rm dom}(\partial)$ and $\partial\vert_{{\rm dom}(\partial_\tau)}=\partial_\tau$.

\begin{prop}
The smooth $*$-subalgebra, $\cap_{k=1}^\infty {\rm dom}(\partial^k)$ is dense in $A$ and the smooth $*$-subalgebra,
$\cap_{k=1}^\infty {\rm dom}(\partial_\tau^k)$ is dense in $A_\tau$.
\end{prop}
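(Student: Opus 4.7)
The plan is to use the standard mollification trick that produces smooth vectors for a strongly continuous one-parameter group of isometries. Given $f\in C_c^\infty(\R)$, I would define, for $a\in A$ (or $a\in A_\tau$),
$$
\alpha_f(a):=\int_\R f(t)\,\alpha_t(a)\,dt,
$$
interpreted as a Bochner integral in the relevant Banach space. Strong continuity of $\alpha$ together with the isometry property ensures that $t\mapsto\alpha_t(a)$ is norm-continuous and norm-bounded on the support of $f$, so the integral converges in both $\|\cdot\|$ and $\|\cdot\|_\tau$ (for the second, recall that $\alpha$ was assumed to restrict to a strongly continuous action by isometric $*$-automorphisms of $A_\tau$).

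Next I would show that $\alpha_f(a)\in\bigcap_{k\ge 1}\mathrm{dom}(\partial^k)$. The one-step computation is
$$
\frac{\alpha_s(\alpha_f(a))-\alpha_f(a)}{s}=\int_\R\frac{f(t-s)-f(t)}{s}\,\alpha_t(a)\,dt,
$$
obtained from the change of variable $t\mapsto t-s$ and the fact that $\alpha_s$ commutes with the integral (again using strong continuity and boundedness on the support of $f$). Letting $s\to 0$ and using that $(f(t-s)-f(t))/s\to -f'(t)$ uniformly with compact support yields $\partial(\alpha_f(a))=-\alpha_{f'}(a)=\alpha_{-f'}(a)$. The same argument for $\partial_\tau$ in $A_\tau$ gives $\partial_\tau(\alpha_f(a))=\alpha_{-f'}(a)$ in the $\|\cdot\|_\tau$ topology. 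Since $-f'\in C_c^\infty(\R)$, an immediate induction shows $\alpha_f(a)\in\mathrm{dom}(\partial^k)$ (respectively $\mathrm{dom}(\partial_\tau^k)$) for every $k$, with $\partial^k(\alpha_f(a))=\alpha_{(-1)^kf^{(k)}}(a)$.

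Finally I would produce an approximate identity: pick $\phi\in C_c^\infty(\R)$ with $\phi\ge 0$, $\int\phi=1$, and set $\phi_n(t):=n\phi(nt)$. Then
$$
\bigl\|\alpha_{\phi_n}(a)-a\bigr\|\le \int_\R \phi_n(t)\,\|\alpha_t(a)-a\|\,dt,
$$
and strong continuity of $\alpha$ at $0$ makes the right-hand side tend to zero. The identical estimate, with $\|\cdot\|$ replaced by $\|\cdot\|_\tau$ and invoking strong continuity of $\alpha$ on $(A_\tau,\|\cdot\|_\tau)$, shows $\alpha_{\phi_n}(a)\to a$ in $A_\tau$ when $a\in A_\tau$. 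Since each $\alpha_{\phi_n}(a)$ lies in the claimed smooth subalgebras, this establishes density. Closure under the $*$-operation follows from $\alpha_f(a)^*=\alpha_{\bar f\circ(-\cdot)}(a^*)$ (because the $\alpha_t$ are $*$-automorphisms), and closure under products is standard but not required for the density statement.

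The main technical point is simply that everything has to be done both for $(A,\|\cdot\|)$ and for $(A_\tau,\|\cdot\|_\tau)$; the extra care needed for the latter is ensuring that the mollified integral converges in the stronger norm $\|\cdot\|_\tau$, which is guaranteed because $\alpha_t$ is an isometry in both norms and $t\mapsto\alpha_t(a)$ is continuous in each. No serious obstacle is expected.
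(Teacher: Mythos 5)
Your proposal is correct and follows essentially the same mollification argument as the paper: define $a_f=\int f(t)\alpha_t(a)\,dt$, show $\partial^k(a_f)=(-1)^k a_{f^{(k)}}$ by a change-of-variable in the difference quotient, and take a shrinking sequence of nonnegative bump functions of integral one to get density, carrying out the same steps in $(A_\tau,\|\cdot\|_\tau)$. The added remarks on Bochner integrability and $*$-closure are fine but not points where the paper's argument differs.
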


\begin{proof}
Let $f$ be a smooth compactly supported complex valued function on $\R$. 
If $a\in A$ define $a_f=\int_{-\infty}^\infty f(t)\alpha_t(a) dt \in A$.
By a change of variable we get
$$
\partial(a_f)=\lim_{s\to 0}\frac{\alpha_s(a_f)-a_f}{s}=\lim_{s\to 0}-\int_{\R}\frac{f(r-s)-f(r)}{-s}\alpha_r(a)dr=-\int_{\R} f^\prime(r)\alpha_r(a) dr= -a_{f^\prime}.
$$ 
By induction, we have $\partial^k(a_f)=(-1)^k a_{f^{(k)}}$ where $f^{(k)}$ 
denotes the $k$-th derivative of $f$. Hence, $a_f\in\cap_{k=1}^\infty {\rm dom}(\partial^k).$
Now take a sequence $\{f_n\}$ of non-negative smooth bump functions symmetric 
about $0,$ each with integral $1,$ and supports shrinking to $\{0\}$.
Then
\begin{align*}
\Vert a_{f_n}-a\Vert &=\Vert \int_\R f_n(t)\alpha_t(a)dt-a\Vert
=\Vert \int_\R f_n(t)\alpha_t(a)dt-\int_\R f_n(t)adt\Vert L^2({\R},H_\tau)\\
&=\Vert \int_\R f_n(t)(\alpha_t(a)-a)dt\Vert
\leq \sup_{t\in{\rm supp}(f_n)}\Vert \alpha_t(a)-a\Vert
\end{align*}
and we see that $\|a_{f_n} -a\|\to 0$ as $n\to \infty$ by the strong continuity of $\alpha$. 
The same argument works equally well with $A_\tau.$
\end{proof}

\subsection{The induced representation of the crossed product of $A$ by $\R$}

In this subsection we review some well known facts about crossed products in order to set notation and to recall the framework of \cite{PR}.
The first thing to recall here is that $\R$ is amenable, so that there is no distinction 
between the full and reduced crossed products. We denote the crossed product
by $A\rtimes_\alpha\R$.
We remind the reader of the multiplication  and involution for  
$x,\,y\in L^1({\R},A)\subset A\rtimes_\alpha{\R}$:
$$
(x*_\alpha y)(r) = \int_{\R}x(t)\alpha_t(y(r-t))dt,\qquad x^*(r)=\alpha_r(x(-r))^*.
$$
We let $H_\tau=L^2(A,\tau)$ be the (GNS) Hilbert space completion of the pre-Hilbert space
$A_\tau^{1/2}:=\{a\in A\,|\, a^*a\in A_\tau\}.$ Of course the action of 
$A$ on the ideal $A_\tau^{1/2}$ by left multiplication extends to a $*$-representation
of $A$ on $H_\tau.$ We denote this $*$-representation by juxtaposition 
since if $a\in A$ and $b\in A_\tau^{1/2}$, then the action of $a$ on the vector $b$ is just $ab.$
\begin{definition}
The covariant pair $(\pi,\lambda)$ of representations of $(A,{\R})$ on $L^2({\R},H_\tau)\cong
L^2({\R})\otimes H_\tau$ is defined by 
taking for  $\xi\in L^2({\R},H_\tau)$, $a\in A$ and $t,\,s\in{\R}$ 
$$
(\pi(a)\xi)(s):=\alpha_s^{-1}(a)\xi(s)\;\;{\rm and}\;\;(\lambda(t)\xi)(s):=\xi(s-t).
$$ 
\end{definition}
Then one easily checks the covariance condition $\lambda(t)\pi(a)\lambda(-t)=\pi(\alpha_t(a))$.
Thus, we get a $*$-representation $\tilde\pi$ of the crossed product algebra $A\rtimes_\alpha{\R}$ 
on the Hilbert space $L^2({\R},H_\tau)$, which for $x$ in the algebra $L^1({\R},A)\subset A\rtimes_\alpha{\R}$ 
and $\xi \in L^2({\R},H_\tau)$ is given by
$$
(\tilde\pi(x)\xi)(s)=\int_{\R}\alpha_s^{-1}(x(t))\xi(s-t)dt.
$$ 
One  checks directly that $\tilde\pi(x*_\alpha y)=\tilde\pi(x) \tilde\pi(y)$ as required.

Our interest now is in  
 $\mathcal{N}=(\tilde\pi(A\rtimes_\alpha{\R}))^{\prime\prime}$, 
 the von Neumann algebra generated by this representation. 
The essential point is that  $\mathfrak A = L^1({\R},A_\tau)\cap L^2({\R},H_\tau)$ 
is a Hilbert algebra with Hilbert space completion $L^2({\R},H_\tau)$ 
satisfying $\tilde\pi(\mathfrak A)^{\prime\prime} =\mathcal N.$ 
Moreover, letting
$M=\pi(A)''\subset \B(H_\tau)$ and  $\bar\tau$ be the normal extension of $\tau$ to $M$, then
with $M_{\bar\tau}$ the domain of $\bar\tau$ in $M$, and $M_{\bar\tau}^{1/2}$ the half-domain, 
we have $\pi(A)\cap M_{\bar\tau}=\pi(A_\tau)$ and 
$\pi(A)\cap M_{\bar\tau}^{1/2}=\pi(A^{1/2}_\tau)$. 
We also note that $H_\tau\cong\mathcal{L}^2(M,\bar\tau)$ is the GNS space 
of $M$ for $\bar\tau$ and thus $H_\tau\cap M=M_{\bar\tau}^{1/2}$. 

By 
Th\'{e}or\`{e}me 1, page 85 of \cite{Dix} there is an induced
faithful, normal, semifinite trace $\hat\tau$ on $\mathcal N$ 
which for products of elements in $x,\,y$ in $L^2({\R},H_\tau)$ such that 
$\tilde\pi(x),\tilde\pi(y)\in\cn$,
 is defined by
\begin{equation}
\hat\tau(\tilde\pi(x)^*\tilde\pi(y)):=\bra x | y\ket=\int_{\R}\bar \tau(x(t)^*y(t))dt.
\label{eq:trace-formula}
\end{equation}


\subsection{Constructing a nonunital spectral triple}

We have already introduced the von Neumann algebra $\mathcal N$ needed for a semifinite spectral triple.
Now we need the remaining ingredients.

We let $D=\frac{-1}{2\pi i}\frac{d}{dt}\otimes 1$ on $L^2({\R})\otimes H_\tau$ 
 so that $D$ is an unbounded self-adjoint operator affiliated to $\mathcal N$.

\begin{prop}
\label{prop:diff}
For $a\in{\rm dom}(\partial)$ we have $[D,\pi(a)]=\frac{1}{2\pi i}\pi(\partial(a)).$
\end{prop}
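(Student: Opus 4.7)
The plan is a direct product-rule calculation on a convenient core for $D$, followed by a density/boundedness argument to conclude the commutator identity on all of the domain.

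First I would fix a dense domain on which everything is classically differentiable. A natural choice is $C_c^\infty(\R)\otimes V$, where $V$ is a dense subspace of $H_\tau$ (for instance $A_\tau^{1/2}$ regarded as vectors in $H_\tau$, or even just $H_\tau$ itself if we take $\xi(s)$ to be a smooth compactly supported Hilbert-space-valued function). Such $\xi$ lie in the domain of $D$, and for $a\in A$ the vector $\pi(a)\xi$ also lies in the domain of $D$ provided $s\mapsto \alpha_s^{-1}(a)$ is strongly differentiable, which is exactly the condition $a\in{\rm dom}(\partial)$.

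The key computation is then the product rule. For $a\in{\rm dom}(\partial)$ and $\xi$ in the core,
\begin{align*}
(D\pi(a)\xi)(s)
&=\frac{-1}{2\pi i}\frac{d}{ds}\bigl(\alpha_s^{-1}(a)\xi(s)\bigr)\\
&=\frac{-1}{2\pi i}\Bigl(\frac{d}{ds}\alpha_s^{-1}(a)\Bigr)\xi(s)+\alpha_s^{-1}(a)\frac{-1}{2\pi i}\xi'(s).
\end{align*}
Since $\alpha_s^{-1}=\alpha_{-s}$ and $\partial$ commutes with the group, we have $\frac{d}{ds}\alpha_s^{-1}(a)=-\alpha_s^{-1}(\partial(a))$. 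Substituting yields
$$
(D\pi(a)\xi)(s)=\frac{1}{2\pi i}\alpha_s^{-1}(\partial(a))\xi(s)+\alpha_s^{-1}(a)(D\xi)(s)=\bigl(\tfrac{1}{2\pi i}\pi(\partial(a))\xi\bigr)(s)+(\pi(a)D\xi)(s),
$$
so $[D,\pi(a)]\xi=\tfrac{1}{2\pi i}\pi(\partial(a))\xi$ on the core.

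Finally, since $\partial(a)\in A$, the operator $\pi(\partial(a))$ is bounded on $L^2(\R,H_\tau)$, and so the identity $[D,\pi(a)]=\tfrac{1}{2\pi i}\pi(\partial(a))$ extends by continuity from the chosen core to all of ${\rm dom}(D)$, showing that the commutator is densely defined and bounded, with the stated value. The only subtlety worth verifying carefully is that $\pi(a)$ preserves the core (equivalently, that $s\mapsto\alpha_s^{-1}(a)\xi(s)$ is strongly differentiable for smooth $\xi$), which follows from $a\in{\rm dom}(\partial)$ and the uniform continuity of $s\mapsto\alpha_s^{-1}(a)$ on compact sets; this is the only place the hypothesis on $a$ is used.
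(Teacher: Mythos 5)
Your proof is correct and follows essentially the same route as the paper's: the heart of both is the product-rule computation $\frac{d}{ds}\bigl(\alpha_s^{-1}(a)\xi(s)\bigr)=-\alpha_s^{-1}(\partial(a))\xi(s)+\alpha_s^{-1}(a)\xi'(s)$, identifying the first term as $-\pi(\partial(a))\xi$. The only difference is that you are more explicit about working on a core of smooth compactly supported functions and then extending by the boundedness of $\pi(\partial(a))$, whereas the paper performs the same differentiation directly for $\xi\in{\rm dom}(D)$ and leaves the core/closure bookkeeping implicit; your version makes that step cleaner without changing the argument.
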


\begin{proof}
Let $\xi\in{\rm dom}(D)$ and $a\in{\rm dom}(\partial)$. Then we claim that
$\pi(a)\xi\in {\rm dom}(D)$. This follows from the computation
\begin{align*}
(D\pi(a)\xi)(s)&=(D\pi(a)\xi)(s)=\frac{-1}{2\pi i}\frac{d}{ds}\big(\pi(a)\xi\big)(s)
=\frac{-1}{2\pi i}\frac{d}{ds}\big(\alpha_s^{-1}(a)\xi\big)(s)\\
                  &=\frac{-1}{2\pi i}\Big(-\alpha_s^{-1}\big(\partial(a)\big)\xi(s)+\alpha_s^{-1}(a)\xi^\prime(s)\Big).
\end{align*}
Since $\big(\pi(a)D\xi\big)(s)=\alpha_s^{-1}(a)\big(\frac{-1}{2\pi i}\xi^\prime(s)\big)$, the result follows.
\end{proof}

To analyse functions of $D$, we first suppose that $A=\C$.
If we define the Fourier transform of a function $g\in L^1({\R})$ via
$
\hat{g}(s)=\int_{\R} e^{-2\pi its}g(t)dt,
$
then (provided $\hat{g}\in{\rm dom}(D)$) by a familiar calculation, 
$D(\hat{g}(t))=\widehat{tg(t)}$. Applying the functional calculus then yields
$$
f(D)\hat{g}=\widehat{fg}=\hat{f}*\hat{g}=\lambda(\hat{f})\hat{g}
$$ 
for  functions $f\in L^1({\R})$. In other words, 
$f(D)=\lambda(\hat{f}).$ In particular, if $f_s(t)=(1+t^2)^{-s/2}$ for $s>1$ so that $f_s\in L^1({\R})$
then $f_s(D)=(1+D^2)^{-s/2}=\lambda(\hat{f_s}).$ For general $A$, the same computations go 
through unchanged.

\begin{lemma}
\label{lem:H-S}
Let $h\in L^2({\R})$ be such that $\lambda(h)$ is in $\mathcal N$, and let 
$T\in M_{\bar\tau}^{1/2}$. Then  $T\lambda(h)$ is a 
Hilbert-Schmidt operator in $\mathcal{N}$ with respect to $\hat\tau$ and
moreover we have
$$
\hat\tau\big((T\lambda(h))^* T\lambda(h)\big)=\bar\tau(T^*T)\,\int_\R \overline h(t)\,h(t)\,dt.
$$
\end{lemma}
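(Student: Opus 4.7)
My plan is to verify the identity on a dense subclass of pairs $(T,h)$ by direct application of the trace formula \eqref{eq:trace-formula}, and then extend by continuity using the Hilbert algebra structure on $\mathfrak A=L^1(\R,A_\tau)\cap L^2(\R,H_\tau)$.

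First I would handle the dense case: take $T=\pi(a)$ with $a\in A_\tau\cap A_\tau^{1/2}$ and $h\in L^1(\R)\cap L^2(\R)$, and set $x(t):=h(t)\,a$, which lies in $\mathfrak A$ because $\|x\|_{L^1(\R,A_\tau)}=\|h\|_1\|a\|_\tau$ and $\|x\|_{L^2(\R,H_\tau)}=\|h\|_2\,\bar\tau(a^*a)^{1/2}$. Since $h(t)$ is a scalar and commutes with $\alpha_s^{-1}$, a direct computation yields $\tilde\pi(x)=\pi(a)\lambda(h)=T\lambda(h)$, and the trace formula \eqref{eq:trace-formula} with $y=x$ gives
\[
\hat\tau\bigl((T\lambda(h))^*T\lambda(h)\bigr)=\int_\R\bar\tau(x(t)^*x(t))\,dt=\bar\tau(T^*T)\int_\R\overline{h(t)}h(t)\,dt,
\]
which is the claim in this restricted case.

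For general $T\in M^{1/2}_{\bar\tau}$ and $h\in L^2(\R)$ with $\lambda(h)\in\cn$, I would approximate $T$ by $T_n=\pi(a_n)$ with $a_n\in A_\tau\cap A_\tau^{1/2}$ converging to $T$ in the $H_\tau$-norm (available by cutting any element of $A_\tau^{1/2}$ off by an approximate unit from the trace ideal $A_\tau$, together with density of $A_\tau^{1/2}$ in $H_\tau$), and approximate $h$ by $h_n\in L^1(\R)\cap L^2(\R)$ with $h_n\to h$ in $L^2(\R)$ and $\|\lambda(h_n)\|$ uniformly bounded (e.g.\ by frequency truncation). Setting $x_n(t):=h_n(t)a_n$, the triangle inequality yields
\[
\|x_n-x_m\|_{L^2(\R,H_\tau)}\leq\|h_n-h_m\|_2\|T_n\|_{H_\tau}+\|h_m\|_2\|T_n-T_m\|_{H_\tau},
\]
so $(x_n)$ is Cauchy in $L^2(\R,H_\tau)$. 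Via the isometry between bounded Hilbert algebra vectors and Hilbert--Schmidt operators in $(\cn,\hat\tau)$, this makes $T_n\lambda(h_n)=\tilde\pi(x_n)$ Hilbert--Schmidt Cauchy, converging to some $K\in\L^2(\cn,\hat\tau)$; passing to the limit in the identity of the previous paragraph applied to $(T_n,h_n)$ gives $\|K\|_{\mathrm{HS}}^2=\bar\tau(T^*T)\int_\R\overline{h(t)}h(t)\,dt$.

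The main obstacle is to identify this Hilbert--Schmidt limit $K$ with the operator $T\lambda(h)$ itself, since $T_n\to T$ holds only in the weaker $H_\tau$-norm. I would handle this by testing against vectors in a core such as $C_c^\infty(\R)$-valued functions with values in $A_\tau^{1/2}$: on such vectors $\lambda(h_n)\xi\to\lambda(h)\xi$ in $L^2(\R,H_\tau)$ thanks to the uniform operator bound on $\lambda(h_n)$ and norm approximation, while the covariance relation $(\pi(a_n)\eta)(s)=\alpha_s^{-1}(a_n)\eta(s)$ together with $a_n\to T$ in $H_\tau$-norm and $\alpha$-invariance of $\bar\tau$ yields pointwise weak convergence in $H_\tau$. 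This gives weak operator convergence $T_n\lambda(h_n)\to T\lambda(h)$, and the uniform Hilbert--Schmidt bound then upgrades weak identification to Hilbert--Schmidt convergence, so that $K=T\lambda(h)$ and the proof is complete.
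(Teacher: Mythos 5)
Your strategy reaches the right conclusion but overcomplicates a short, direct argument, and the identification step at the end is the weak point.

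The observation you miss is that $T\in M_{\bar\tau}^{1/2}$ is itself a vector in $H_\tau$ (the paper explicitly records $H_\tau\cap M=M_{\bar\tau}^{1/2}$), so the function $x(t):=h(t)T$ is already an element of $L^2(\R,H_\tau)$, with $\|x\|_{L^2(\R,H_\tau)}^2=\bar\tau(T^*T)\,\|h\|_2^2$. No density class or approximation is needed. Moreover, the trace formula \eqref{eq:trace-formula} is stated for arbitrary $x,y\in L^2(\R,H_\tau)$ with $\tilde\pi(x),\tilde\pi(y)\in\cn$, not only for Hilbert algebra elements of $L^1(\R,A_\tau)\cap L^2(\R,H_\tau)$. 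Thus the paper's proof is three lines: verify $\tilde\pi(x)=T\lambda(h)$ on the core $C_c(\R,H_\tau)$ by the same scalar pull-out you already use (and note $T\lambda(h)\in\cn$ by construction), then apply \eqref{eq:trace-formula}.

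Regarding your approximation route: the third paragraph has a genuine gap. You assert that $a_n\to T$ in $H_\tau$-norm together with $\alpha$-invariance of $\bar\tau$ gives "pointwise weak convergence in $H_\tau$", hence weak operator convergence $T_n\lambda(h_n)\to T\lambda(h)$. But $L^2(M,\bar\tau)$-convergence of $a_n$ to $T$ does not in itself control the \emph{operator} action of $\pi(a_n)$ on $H_\tau$; if $\|a_n\|$ is unbounded, one cannot conclude SOT or WOT convergence of $\pi(a_n)$ from $L^2$-convergence alone, and your construction gives no uniform operator bound. The clean way to close your argument is the one you already hint at: since $x_n\to x$ in $L^2(\R,H_\tau)$ and the map $L^2(\R,H_\tau)\to\L^2(\cn,\hat\tau)$, $x\mapsto\tilde\pi(x)$, is an isometry, the Hilbert--Schmidt limit $K$ is $\tilde\pi(x)$, and then one checks $\tilde\pi(x)=T\lambda(h)$ directly on $C_c(\R,H_\tau)$. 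But once you have done that last step you have reproduced the paper's argument, and the whole approximation apparatus becomes superfluous.
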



\begin{proof}
Firstly, by construction $T\lambda(h)\in\cn$.
Moreover, $x(t):=h(t) T$ and we have for  
$\xi\in C_c({\R},H_\tau)\subset L^2({\R},H_\tau)$:
\begin{align*}
(\tilde\pi(x)\xi)(s)&=\int_{\R}\alpha_s^{-1}(x(t))\xi(s-t)dt=\int_{\R}\alpha_s^{-1}(T)h(t)\xi(s-t)dt\\
&=\alpha_s^{-1}(T)\int_{\R} h(t)\xi(s-t)dt=\alpha_s^{-1}(T)(\lambda(h)\xi)(s)=(T\lambda(h)\xi)(s).
\end{align*}
By density of $C_c({\R},H_\tau)$ in $L^2({\R},H_\tau)$, we deduce that
$T\lambda(h)$  is of the form $\tilde\pi(x)$
for $x\in L^2(\R,H_\tau)$, since as previously remarked $L^2(\R,M_{\bar\tau}^{1/2})
\subset L^2(\R,H_\tau)$. The result follows by  equation \eqref{eq:trace-formula} since
$$
\bra x | x\ket=\int_{\R}\bar\tau(x(t)^*x(t))dt=\int_{\R}\bar\tau(T^*T\overline{h(t)}h(t))dt
=\bar\tau(T^*T)\int_{\R} |h(t)|^2 dt<\infty.
$$
\end{proof}

\begin{cor}
\label{roc}
Let $s>1$.
The restriction of the weight $\vf_s$ associated to $D$ (see Definition \ref{def:d-does-int})
to $M:=\pi(A)''\subset\cn:=\tilde\pi(A\rtimes_\alpha\R)''$ 
is proportional to $\overline\tau$, the normal extension of $\tau$ to $M$.
\end{cor}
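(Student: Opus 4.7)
The plan is to realize $(1+D^2)^{-s/4}$ as a left-regular type operator $\lambda(h_s)$ for an explicit $L^2$-function $h_s$, and then apply Lemma \ref{lem:H-S} to convert the defining expression $\varphi_s(T) = \hat\tau((1+D^2)^{-s/4}T(1+D^2)^{-s/4})$ into a multiple of $\bar\tau(T)$.

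\textbf{Step 1 (identify the resolvent as a convolution operator).} Let $g_s(t):=(1+t^2)^{-s/4}$. By the functional calculus discussion preceding Lemma \ref{lem:H-S} (applied with $f=g_s$), we have
\[
(1+D^2)^{-s/4}=g_s(D)=\lambda(\widehat{g_s}).
\]
Since $g_s$ is real-valued and even, $\widehat{g_s}$ is real-valued and even as well, so $\lambda(\widehat{g_s})$ is self-adjoint, consistent with the self-adjointness of $(1+D^2)^{-s/4}$. By Plancherel, $\|\widehat{g_s}\|_2^2=\|g_s\|_2^2=\int_\R (1+t^2)^{-s/2}\,dt$, which is finite precisely because $s>1$. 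Set $c_s:=\int_\R(1+t^2)^{-s/2}\,dt$.

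\textbf{Step 2 (compute $\varphi_s$ on the $\bar\tau$-finite cone of $M$).} Let $T\in M_+$ with $\bar\tau(T)<\infty$, so $T\in M_{\bar\tau}$ and $T^{1/2}\in M_{\bar\tau}^{1/2}$. Then
\[
\varphi_s(T)=\hat\tau\!\bigl(\lambda(\widehat{g_s})^*\,T^{1/2}T^{1/2}\,\lambda(\widehat{g_s})\bigr)=\hat\tau\!\bigl((T^{1/2}\lambda(\widehat{g_s}))^*\,T^{1/2}\lambda(\widehat{g_s})\bigr).
\]
Apply Lemma \ref{lem:H-S} with $T$ replaced by $T^{1/2}$ and $h=\widehat{g_s}$, noting that $\lambda(\widehat{g_s})=(1+D^2)^{-s/4}\in\cn$. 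This yields
\[
\varphi_s(T)=\bar\tau(T)\,\int_\R|\widehat{g_s}(t)|^2\,dt=c_s\,\bar\tau(T).
\]

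\textbf{Step 3 (extend by normality and semifiniteness).} The weight $\bar\tau$ on $M$ is faithful, normal and semifinite, so the cone $\{T\in M_+:\bar\tau(T)<\infty\}$ is $\sigma$-weakly dense in $M_+$ and every $T\in M_+$ is the supremum of an increasing net from this cone. The restriction $\varphi_s|_M$ is likewise normal (as a restriction of a normal weight), hence by passing to the supremum in Step 2 we conclude $\varphi_s(T)=c_s\,\bar\tau(T)$ for all $T\in M_+$, with the usual convention $c_s\cdot\infty=\infty$. This is exactly the claimed proportionality.

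\textbf{Main obstacle.} There is no deep obstruction; the only subtle point is the interplay of the two weights on elements not in $M_{\bar\tau}$, which is handled cleanly by the normality of $\varphi_s|_M$ and the semifiniteness of $\bar\tau$. The integrability $\widehat{g_s}\in L^2(\R)$ is forced by the hypothesis $s>1$, and this is precisely the regime in which Lemma \ref{lem:H-S} applies.
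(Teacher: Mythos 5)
Your proof is correct and follows essentially the same strategy as the paper's: realize $(1+D^2)^{-s/4}$ as $\lambda(\hat{h}_s)$, invoke Lemma \ref{lem:H-S} to get the identity $\vf_s = \|h_s\|_2^2\,\bar\tau$ on the $\bar\tau$-finite cone, and then extend to all of $M_+$. The only difference is in the extension step: you appeal to normality of the restriction $\vf_s|_M$ together with the standard fact that for a semifinite normal trace any $T\in M_+$ is an increasing supremum of $\bar\tau$-finite elements, whereas the paper instead constructs an explicit WOT-convergent sequence $T_k=T^{1/2}b_kT^{1/2}\le T$ and combines monotonicity of $\vf_s$ with ultraweak lower semicontinuity of $\bar\tau$ alone. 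Both routes are standard and equivalent in strength; yours is slightly more economical, the paper's slightly more hands-on.
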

\begin{proof}
By definition of $\vf_s$,  for $0\leq a\in M_{\bar\tau}$ and with $h_s(t):=(1+t^2)^{-s/4}$,
we have from Lemma \ref{lem:H-S} that
$$
\vf_s\big(\pi(a)\big)=\hat\tau\big(\lambda(\hat h_s)\pi(a)\lambda(\hat h_s)\big)
=\|h_s\|_2^2\,\tau(a)=\|h_s\|_2^2\,\overline\tau\big(\pi(a)\big).
$$ 
Hence,  $\vf_s|_{(M_\tau)_+}=\|h_s\|_2^2\,\overline\tau|_{(M_{\bar{\tau}})_+}$. 

Let now $T\in M\setminus M_{\bar{\tau}}$ satisfy $0\leq T$ so that $\bar{\tau}(T)=+\infty$. 
We construct a sequence $(T_k)_{k\in\N}$ in $M_{\bar{\tau}+}$
such that $T_k$ converges to $T$ in the weak operator topology and such that 
$0\leq T_k\leq T$. To do this, we choose $0\leq b_k\leq 1$ in $M_{\bar{\tau}}$
converging  in the weak operator topology to the identity of $M$ and set
$T_k:=T^{1/2}b_k T^{1/2}$. By construction, $0\leq T_k\leq T$, and since $M_{\bar{\tau}}$ is an ideal
in $M$, $T_k\in M_{\bar{\tau}}$. The convergence follows from  $\langle \psi,T_k\phi\rangle
=\langle T^{1/2}\psi,b_k T^{1/2}\phi\rangle\to \langle T^{1/2}\psi, T^{1/2}\phi\rangle
=\langle \psi, T\phi\rangle$, for all $\psi,\phi\in\H_\tau$.

Hence, we find that
$\vf_s(T)\geq\vf_s(T_k)=\|h_s\|_2^2\,\overline\tau\big(T_k\big)$,
and thus
$\vf_s(T)\geq\underline\lim_k \|h_s\|_2^2\,\overline\tau\big(T_k\big)$.
Since
the weak operator topology and the ultra-weak topology agree on bounded sets and 
$\overline\tau$ is ultraweakly lower semicontinuous, we deduce that
$\underline\lim_k \,\overline\tau\big(T_k\big)\geq \overline\tau\big(T\big)=+\infty$.
Hence $\vf_s(T)=+\infty$ and therefore $\vf_s|_{M_+}=\|h_s\|_2^2\,\overline\tau|_{M_+}$
as needed.
\end{proof}

{\bf Notation}. We use $({\rm dom}(\partial_\tau))^2$ for the $*$-algebra of finite sums of products of two elements in ${\rm dom}(\partial_\tau).$

\begin{lemma}
Let $a\in ({\rm dom}(\partial_\tau))^2 \subset {\rm dom}(\partial_\tau)$ so that $a$ is a sum of factors $b_ic_i$, where 
$b_i,\,c_i \in {\rm dom}(\partial_\tau)$.
Then for all $s>1$, the operator $\pi(a)(1+D^2)^{-s/2}$ is trace-class in 
$\mathcal{N}$ with respect to $\hat\tau$.
\end{lemma}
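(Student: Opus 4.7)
The plan is, by linearity, to reduce to a single product $a = bc$ with $b,c \in \mathrm{dom}(\partial_\tau) \subset A_\tau$, and then to write $\pi(bc)(1+D^2)^{-s/2}$ as the sum of a main term that factors as a product of two Hilbert-Schmidt operators plus a commutator correction controlled via a resolvent integral that exploits only the first derivative $\partial c$. A key preliminary observation is that trace-class elements are automatically in the half-domain: for $b \in A_\tau$ one has $\tau(b^*b) \le \|b\|\,\tau(|b|) < \infty$, so $b \in A_\tau^{1/2}$, and therefore $\pi(b),\pi(c) \in M_{\bar\tau}^{1/2}$. Setting $f_r(t)=(1+t^2)^{-r/2}$, the identity $(1+D^2)^{-s/4}=\lambda(\hat f_{s/2})$ together with $f_{s/2} \in L^2(\mathbb R)$ precisely when $s>1$ then yields, via Lemma~\ref{lem:H-S}, that both $\pi(b)(1+D^2)^{-s/4}$ and $(1+D^2)^{-s/4}\pi(c)$ are Hilbert-Schmidt in $\mathcal N$ for every $s>1$.

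Next I decompose
\[
\pi(bc)(1+D^2)^{-s/2} = \pi(b)(1+D^2)^{-s/2}\pi(c) + \pi(b)\bigl[\pi(c),(1+D^2)^{-s/2}\bigr].
\]
The first summand factors as $[\pi(b)(1+D^2)^{-s/4}] \cdot [(1+D^2)^{-s/4}\pi(c)]$, a product of two Hilbert-Schmidt operators, hence trace-class. To handle the correction term I first treat $s \in (1,2)$ via the representation
\[
(1+D^2)^{-s/2} = \frac{\sin(\pi s/2)}{\pi}\int_0^\infty \lambda^{-s/2}(1+\lambda+D^2)^{-1}\,d\lambda,
\]
together with Proposition~\ref{prop:diff}, which yields
\[
[\pi(c),(1+\lambda+D^2)^{-1}] = \tfrac{1}{2\pi i}(1+\lambda+D^2)^{-1}\bigl(\pi(\partial c)D + D\pi(\partial c)\bigr)(1+\lambda+D^2)^{-1}.
\]
Each resulting integrand of $\pi(b)[\pi(c),(1+D^2)^{-s/2}]$ then splits as a product of Hilbert-Schmidt factors, typically $[\pi(b)(1+\lambda+D^2)^{-1}] \cdot [\pi(\partial c)\,D(1+\lambda+D^2)^{-1}]$, both of which are Hilbert-Schmidt since $\partial c \in A_\tau$. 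Computing the relevant $L^2$-norms via Plancherel and the substitution $t=\sqrt{1+\lambda}\,u$ gives Hilbert-Schmidt norms of order $(1+\lambda)^{-3/4}$ and $(1+\lambda)^{-1/4}$ respectively, so the trace norm of the integrand is $O((1+\lambda)^{-1})$ and $\int_0^\infty \lambda^{-s/2}(1+\lambda)^{-1}\,d\lambda < \infty$ for $s \in (0,2)$.

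For $s \ge 2$ I pick some $s' \in (1,2)$ and factor $\pi(bc)(1+D^2)^{-s/2} = \pi(bc)(1+D^2)^{-s'/2} \cdot (1+D^2)^{-(s-s')/2}$; the first factor is trace-class by the above and the second is bounded, so the product remains trace-class. The main technical hurdle is the quantitative Hilbert-Schmidt estimate in the correction term: one must verify that, after the commutator expansion, the combined $\lambda$-decay of the two Hilbert-Schmidt factors is enough to render the resolvent integral convergent in trace norm. This forces the initial range $s \in (1,2)$, after which the bootstrap step lifts the conclusion to all $s > 1$.
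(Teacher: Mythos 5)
Your proposal is correct and follows essentially the same route as the paper: the same decomposition $\pi(bc)(1+D^2)^{-s/2} = \pi(b)(1+D^2)^{-s/2}\pi(c) + \pi(b)[\pi(c),(1+D^2)^{-s/2}]$, the same identification of the first term as a product of Hilbert--Schmidt operators (via Lemma~\ref{lem:H-S}), and the same treatment of the commutator term by the integral formula for fractional powers restricted to $s\in(1,2)$, with Proposition~\ref{prop:diff} supplying the first-order commutator $[D^2,\pi(c)] = D\pi(\partial_\tau c)+\pi(\partial_\tau c)D$ and with each resulting integrand factored into two Hilbert--Schmidt pieces whose $\lambda$-decay makes the resolvent integral converge in trace norm. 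The only differences are cosmetic: you make explicit the elementary bootstrap to $s\geq 2$ and the inclusion $A_\tau\subset A_\tau^{1/2}$ that the paper leaves implicit, and your slightly sharper Hilbert--Schmidt estimate gives $O((1+\lambda)^{-1})$ decay where the paper's cruder bound yields $O((1+\lambda)^{-1/2})$ -- both suffice on $s\in(1,2)$.
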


\begin{proof}
Without loss of generality we assume that $a=bc$ with $b,c \in {\rm dom}(\partial_\tau)$. Observe that
\begin{equation}
\pi(a)(1+D^2)^{-s/2}=\pi(bc)(1+D^2)^{-s/2}=\pi(b)[\pi(c),(1+D^2)^{-s/2}] + \pi(b)(1+D^2)^{-s/2}\pi(c).
\label{eq:first-line}
\end{equation}
The last term is trace-class in $\mathcal{N}$, 
since it is the product of two Hilbert-Schmidt operators in $(\mathcal{N},\hat\tau)$. 
Indeed, if we define
the bounded $L^2$-function, $f(t)=(1+t^2)^{-s/4}$, then
\begin{align*}
\pi(b)(1+D^2)^{-s/2}\pi(c)
&=\pi(b)(1+D^2)^{-s/4}(1+D^2)^{-s/4}\pi(c)=\pi(b)\lambda(\hat f)\lambda(\hat f)\pi(c)\\
&=\pi(b)\lambda(\hat f)\{\pi(c^*)\lambda(\hat f)\}^*=\tilde\pi(x)\tilde\pi(y)^*,
\end{align*}
where $x(t)=bf(t)$ and $y(t)=c^*f(t).$ So by the previous lemma 
$\tilde\pi(x)$ and $\tilde\pi(y)$ are Hilbert-Schmidt, and 
hence $\tilde\pi(x)\tilde\pi(y)^*$ is trace-class in $(\mathcal{N},\hat\tau)$.

We next show that the first term is trace-class in $(\mathcal{N},\hat\tau)$. 
This is more subtle. It suffices to assume that $s<2$, so that $s/2<1$. Let 
$C_s=\frac{\sin(s\pi/2)}{\pi}$ so by the integral formula for fractional powers,
\cite[page 701]{CP1}, we have
$
(1+D^2)^{-s/2}=C_s\int_0^\infty t^{-s/2}(1+D^2 +t)^{-1}dt.
$
Now we calculate, using the fact that $c\in{\rm dom}(\partial_\tau)$,
\begin{align*}
[\pi(c),(1+D^2)^{-s/2}]&=C_s\int_0^\infty t^{-s/2}[\pi(c),(1+D^2+t)^{-1}]dt\\
&=C_s\int_0^\infty t^{-s/2}(1+D^2+t)^{-1}[D^2,\pi(c)](1+D^2+t)^{-1} dt\\
&=C_s\int_0^\infty t^{-s/2}(1+D^2+t)^{-1}\Big(D[D,\pi(c)]+[D,\pi(c)]D\Big)(1+D^2+t)^{-1} dt\\
&=\frac{C_s}{2\pi i}\int_0^\infty t^{-s/2}(1+D^2+t)^{-1}\Big(D\pi(\partial_\tau(c))+\pi(\partial_\tau(c))D\Big)(1+D^2+t)^{-1} dt\\
&=\frac{C_s}{2\pi i}\int_0^\infty t^{-s/2}D(1+D^2+t)^{-1}\pi(\partial_\tau(c))(1+D^2+t)^{-1} dt\\
&\hspace*{1in}+\; \frac{C_s}{2\pi i}\int_0^\infty t^{-s/2}(1+D^2+t)^{-1}\pi(\partial_\tau(c))D(1+D^2+t)^{-1} dt.
\end{align*}

Hence, the first term on the right hand side of Equation \eqref{eq:first-line} equals
\begin{align*}
\pi(b)[\pi(c),(1+D^2)^{-s/2}]&=\frac{C_s}{2\pi i}\int_0^\infty t^{-s/2}\pi(b)D(1+D^2+t)^{-1}\pi(\partial_\tau(c))(1+D^2+t)^{-1} dt\\
&+\frac{C_s}{2\pi i}\int_0^\infty t^{-s/2}\pi(b)(1+D^2+t)^{-1}\pi(\partial_\tau(c))D(1+D^2+t)^{-1} dt.
\end{align*}
To complete the proof, we show that both of these integrands are trace-class in 
$\mathcal{N}$ and that the integrals converge in trace-norm. We do this for the 
first integral as the argument for the second integral is the same. We factor the 
integrand as a product of Hilbert-Schmidt operators and estimate their Hilbert-Schmidt
norms. 

First, $\pi(b)D(1+D^2+t)^{-1}=\pi(b)\lambda(\hat f)=\tilde\pi(b\cdot f)$ where 
$f(x)=\frac{x}{1+x^2+t}$ is a bounded $L^2$ function. Hence, and writing $\Vert\cdot\Vert_{HS}$, 
$\Vert\cdot\Vert_{op}$ for the Hilbert-Schmidt and operator norms respectively, 
\begin{align*}
\|\pi(b)D(1+D^2+t)^{-1}\|_{HS}&=\bra\tilde\pi(b\cdot f)\,|\,\tilde\pi(b\cdot f)\ket^{1/2}=\left(\tau(b^*b)\int_{\R}\left(\frac{x}{1+x^2+t}\right)^2dx\right)^{1/2}\\
&\leq \left(\tau(b^*b)\int_{\R}\frac{1}{1+x^2}\, dx\right)^{1/2}=\sqrt{\pi}\,\left(\tau(b^*b)\right)^{1/2}.
\end{align*}
Now letting $c^\prime = \partial(c)=\partial_\tau(c)\in A_\tau$,  we  calculate that
\begin{align*}
\|\pi(c^\prime)(1+D^2+t)^{-1}\|_{HS} &\leq \|\pi(c^\prime)(1+D^2+t)^{-1/2}\|_{HS}\|(1+D^2+t)^{-1/2}\|_{op}
\\
&=\left(\tau((c^\prime)^*c^\prime)\int_{\R}\left(\frac{1}{\sqrt{1+x^2+t}}\right)^2 dx\right)^{1/2}\|(1+D^2+t)^{-1/2}\|_{op}
\end{align*}\begin{align*}
&\leq  (\tau((c^\prime)^*c^\prime))^{1/2}\left(\int_{\R}\frac{1}{1+x^2} dx\right)^{1/2}\frac{1}{\sqrt{1+t}}
= \left(\tau((c^\prime)^*c^\prime))\,\pi\right)^{1/2}\frac{1}{\sqrt{1+t}}.
\end{align*}
Hence, the integrand is trace-class in $\mathcal{N}$ with trace-norm bounded by
$$
\| t^{-s/2}\pi(b)D(1+D^2+t)^{-1}\pi(\partial(c))(1+D^2+t)^{-1}\|_1\leq t^{-s/2}(\tau(b^*b)\,\tau((c^\prime)^*c^\prime))^{1/2}\pi\frac{1}{\sqrt{1+t}}.
$$
Since for $1<s<2$ the function $t\mapsto t^{-s/2}/\sqrt{1+t}$ is integrable as a function of $t\in [0,\infty)$, we see that the integral is a trace-class operator in $\mathcal{N}$.
\end{proof}

This completes the proof that 
 $({\rm dom}(\partial_\tau)^2, L^2(\R,\H_\tau),D)$ is a spectral triple.

We now extend our analysis with a useful formula for the trace of certain elements. First we need a technical result.
\begin{lemma}
If $\{A_n\}$ is a sequence of operators in $\mathcal{N}$ with 
$0\leq A_n \leq 1$ for all $n$ and $A_n\to 1$ in the weak 
operator topology on $\B(L^2({\R},H_\tau))$, then for all trace 
class operators $T\in\mathcal{N}$, $\hat\tau(T)=\lim_n \hat\tau(A_n T).$
\end{lemma}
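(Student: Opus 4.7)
The cleanest route is to observe that for fixed trace class $T\in\cn$, the map $A\mapsto\hat\tau(AT)$ is a normal (i.e.\ ultraweakly continuous) linear functional on $\cn$, and that the hypothesis $\|A_n\|\le 1$ together with WOT-convergence forces ultraweak convergence, from which the conclusion is immediate. Since however this fact is not explicitly recorded earlier in the paper, I would give a self-contained argument along the following lines, which parallels the lower-semicontinuity argument already used in Corollary \ref{roc}.

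First I would reduce to the case $T\ge 0$. Writing $T=T_1-T_2+i(T_3-T_4)$ via the Jordan decomposition of its real and imaginary parts gives four positive trace class operators in $\cn$, and since $\hat\tau(A_nT)-\hat\tau(T)$ decomposes accordingly it suffices to show $\hat\tau(A_nT)\to\hat\tau(T)$ for each $T\ge 0$. Fix such a $T$ and set $S:=T^{1/2}\in\cn$; then $S\ge 0$ and $\hat\tau(S^2)=\hat\tau(T)<\infty$, so $S$ is Hilbert--Schmidt with respect to $\hat\tau$. In particular $SA_n$ and $A_nS$ are Hilbert--Schmidt, so the tracial property applies and gives
\begin{equation*}
\hat\tau(A_nT)=\hat\tau(A_n S\cdot S)=\hat\tau(S A_n S).
\end{equation*}

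Next I would obtain the two-sided bound. Since $0\le A_n\le 1$, conjugating by $S$ yields $0\le SA_nS\le S^2=T$ in $\cn$, hence
\begin{equation*}
\hat\tau(SA_nS)\le\hat\tau(T),\qquad\text{so that}\qquad \limsup_n\hat\tau(A_nT)\le\hat\tau(T).
\end{equation*}
For the reverse inequality I note that $SA_nS\to T$ in the weak operator topology: indeed, for $\xi,\eta\in L^2(\R,H_\tau)$,
\begin{equation*}
\langle SA_nS\xi,\eta\rangle=\langle A_nS\xi,S\eta\rangle\longrightarrow\langle S\xi,S\eta\rangle=\langle T\xi,\eta\rangle.
\end{equation*}
Because the sequence $(SA_nS)$ is norm-bounded (by $\|T\|$), the weak operator and ultraweak topologies agree on it, so $SA_nS\to T$ ultraweakly. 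The faithful normal semifinite trace $\hat\tau$ is ultraweakly lower semicontinuous on $\cn_+$, hence
\begin{equation*}
\hat\tau(T)\le\liminf_n\hat\tau(SA_nS)=\liminf_n\hat\tau(A_nT).
\end{equation*}
Combining the two bounds gives $\lim_n\hat\tau(A_nT)=\hat\tau(T)$, as required.

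The only genuinely delicate point is the passage from WOT- to ultraweak convergence needed to invoke normality of $\hat\tau$; this is where the uniform bound $\|A_n\|\le 1$, which carries over to the bound $\|SA_nS\|\le\|T\|$, is essential. Everything else is bookkeeping with the trace property and operator monotonicity.
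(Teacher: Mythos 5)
Your argument is correct and is essentially the paper's own proof: reduce to $T\ge 0$ by Jordan decomposition, use $0\le T^{1/2}A_nT^{1/2}\le T$ for the upper bound, and use WOT-convergence of $T^{1/2}A_nT^{1/2}\to T$ together with the agreement of WOT and ultraweak topologies on bounded sets and the ultraweak lower semicontinuity of $\hat\tau$ for the lower bound. The only cosmetic difference is that you spell out the Hilbert--Schmidt/cyclicity justification for $\hat\tau(A_nT)=\hat\tau(T^{1/2}A_nT^{1/2})$, which the paper leaves implicit.
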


\begin{proof}
Using the Jordan decomposition, it suffices to prove this for trace-class operators $T\geq 0$. In this case, 
$
T^{1/2}A_n T^{1/2}\leq T\|A_n\|\leq T,
$
and therefore $\hat\tau(A_n T)=\hat\tau(T^{1/2}A_n T^{1/2})\leq \hat\tau(T)$, so that $\overline{\lim}_n\hat\tau(A_n T)\leq\hat\tau(T)$.
On the other hand, one easily shows that $T^{1/2}A_n T^{1/2}\to T$ in the weak operator topology: that is, for $\xi,\eta\in L^2({\R},H_\tau)$
$$
\bra T^{1/2} A_n T^{1/2}\xi\,|\,\eta\ket=\bra A_n T^{1/2}\xi\,|\,T^{1/2}\eta\ket\to \bra T^{1/2}\xi\,|\,T^{1/2}\eta\ket=\bra T\xi\,|\,\eta\ket.
$$
Since the weak operator topology and the ultra-weak topology agree on bounded sets and $\hat\tau$ is 
ultraweakly lower semicontinuous, $\hat\tau(T)\leq \underline{\lim}_n\hat\tau(T^{1/2} A_n T^{1/2})=\underline{\lim}_n\hat\tau(A_n T)$. 
Thus we have the bounds
$$
\overline{\lim}_n\hat\tau(A_n T)\leq\hat\tau(T)\leq \underline{\lim}_n\hat\tau(A_n T),
$$ 
and the result follows.
\end{proof}

\begin{lemma}
\label{lem:first-integrable} Let $a\in ({\rm dom}(\partial_\tau))^2 \subset {\rm dom}(\partial_\tau)$.  Then for all $s>1$ 
$$
\hat\tau\left(\pi(a)(1+D^2)^{-s/2}\right)=\tau(a)\int_{\R} (1+t^2)^{-s/2}dt.
$$ 
\end{lemma}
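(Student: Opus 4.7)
The plan is to reduce, by linearity and polarization, to the case of a positive element of the form $\tilde a^{*}\tilde a$ with $\tilde a\in{\rm dom}(\partial_\tau)$, and then to invoke Corollary \ref{roc} together with cyclicity of $\hat\tau$ to identify $\hat\tau\bigl(\pi(a)(1+D^2)^{-s/2}\bigr)$ with the already-computed value $\vf_s(\pi(a))$.

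By linearity of both sides in $a$, it suffices to treat $a=bc$ with $b,c\in{\rm dom}(\partial_\tau)$. The polarization identity
$$
4bc \;=\; \sum_{k=0}^{3} i^{k}\, (b^{*}+i^{k}c)^{*}(b^{*}+i^{k}c)
$$
expresses $bc$ as a linear combination of elements of the form $\tilde a^{*}\tilde a$ with $\tilde a\in{\rm dom}(\partial_\tau)$, since ${\rm dom}(\partial_\tau)$ is a $*$-subalgebra of $A$. By linearity again we may therefore assume $a=\tilde a^{*}\tilde a\ge 0$, which lies in $({\rm dom}(\partial_\tau))^{2}\subseteq A_\tau$.

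For such positive $a$ we have $\pi(a)\in M_{\bar\tau}$, so Corollary \ref{roc}, together with $\|h_{s}\|_{2}^{2}=\int_\R(1+t^{2})^{-s/2}\,dt$ for $h_{s}(t)=(1+t^{2})^{-s/4}$, yields
$$
\hat\tau\bigl((1+D^{2})^{-s/4}\pi(a)(1+D^{2})^{-s/4}\bigr) \;=\; \vf_{s}(\pi(a)) \;=\; \tau(a)\int_{\R}(1+t^{2})^{-s/2}\,dt.
$$
Meanwhile, the previous lemma guarantees that $\pi(a)(1+D^{2})^{-s/2}$ is trace class in $(\cn,\hat\tau)$, so cyclicity of $\hat\tau$ applied to the factorisation $\pi(a)(1+D^{2})^{-s/2}=\bigl[\pi(a)(1+D^{2})^{-s/4}\bigr](1+D^{2})^{-s/4}$ gives
$$
\hat\tau\bigl(\pi(a)(1+D^{2})^{-s/2}\bigr) \;=\; \hat\tau\bigl((1+D^{2})^{-s/4}\pi(a)(1+D^{2})^{-s/4}\bigr),
$$
and combining the last two displays delivers the claim.

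The one delicate point is the invocation of cyclicity, which is a standard property of normal semifinite traces provided the product is trace class and the remaining operand is bounded and lies in $\cn$; both conditions are guaranteed here by the previous lemma and the boundedness of $(1+D^{2})^{-s/4}$. I do not expect any further obstacle, since the polarization reduction turns the problem into the positive case that is already answered by Corollary \ref{roc}.
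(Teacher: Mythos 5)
Your approach is genuinely different from the paper's and is sound in outline, but two steps need repair.

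The paper proves the identity directly: it writes $a=bc$, truncates by $\lambda(\hat g_n)=\chi_{[-n,n]}(D)$, expresses $\hat\tau(\lambda(\hat g_n)\pi(b)\pi(c)\lambda(\hat f_s))$ as the inner product $\langle x_n\,|\,y\rangle$ via the trace formula \eqref{eq:trace-formula}, and passes to the limit using the preceding lemma on weakly convergent approximate identities. You instead polarise down to the positive case and then identify $\hat\tau\bigl(\pi(a)(1+D^2)^{-s/2}\bigr)$ with $\vf_s(\pi(a))$, which Corollary~\ref{roc} already evaluates. This is an attractive shortcut, and it makes clearer the structural reason for the identity, but it leans on cyclicity in a way that is not covered by the ``standard'' fact you cite.

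Two concrete issues. First, the polarization identity as written is off: with $(b^*+i^kc)^*(b^*+i^kc)=bb^*+i^k bc+i^{-k}c^*b^*+c^*c$, one finds $\sum_{k=0}^3 i^k(b^*+i^kc)^*(b^*+i^kc)=4\,c^*b^*$, not $4bc$. You want, e.g., $4bc=\sum_{k=0}^3 i^{-k}(b^*+i^kc)^*(b^*+i^kc)$, or $4bc=\sum_{k=0}^3 i^{k}(c+i^kb^*)^*(c+i^kb^*)$; this is a bookkeeping error, easily fixed, but as stated the reduction to the positive case is wrong.

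Second, and more seriously, the cyclicity step is not justified by the property you invoke. The standard cyclicity for a normal semifinite trace is $\hat\tau(TB)=\hat\tau(BT)$ when $T\in\mathcal L^1(\cn,\hat\tau)$ and $B\in\cn$. In your factorisation $\pi(a)(1+D^2)^{-s/2}=\bigl[\pi(a)(1+D^2)^{-s/4}\bigr]\cdot(1+D^2)^{-s/4}$, the left factor $\pi(a)(1+D^2)^{-s/4}$ is only Hilbert--Schmidt (by Lemma~\ref{lem:H-S}), not trace class; knowing that the \emph{product} is trace class and the \emph{other} factor is bounded is not the hypothesis of the basic cyclicity lemma. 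The conclusion $\hat\tau(AB)=\hat\tau(BA)$ does hold under the weaker hypothesis that both $AB$ and $BA$ lie in $\mathcal L^1$ (a Laurie--Nordgren/Brown--Kosaki type result; cf.\ the citation of \cite{BrK} used in the proof of Corollary~\ref{cor:exact}), and both products \emph{are} trace class here --- $AB$ by the preceding lemma, and $BA=(1+D^2)^{-s/4}\pi(a)(1+D^2)^{-s/4}$ because $\vf_s(\pi(a))<\infty$ by Corollary~\ref{roc} once $a\ge 0$ --- so the step can be repaired by explicitly verifying that $BA\in\mathcal L^1$ and citing the appropriate cyclicity theorem. As written, though, the justification is incorrect, and one cannot get away with the elementary $L^1\times\cn$ cyclicity nor with pure $L^2\times L^2$ cyclicity, since $(1+D^2)^{-s/4}\notin\mathcal L^2(\cn,\hat\tau)$ in the nonunital setting.

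With both corrections made, your argument works and offers a genuinely different route; the trade-off is that it needs the sharper cyclicity theorem, whereas the paper's approximation argument needs only the elementary inner-product identity \eqref{eq:trace-formula} and the monotone/weak-$*$ limit lemma.
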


\begin{proof}
Without loss of generality we assume that $a$ factors as $a=bc$, where $b,\,c \in {\rm dom}(\partial_\tau)$.
Let $g_n=\mathcal{X}_{[-n,n]}$ so that $g_n\in L^1({\R})\cap L^\infty({\R})\subset L^2({\R}).$ 
Then the multiplication operators $M_{g_n}$ on $L^2({\R})$ satisfy
$$
0\leq M_{g_n}\leq 1;\;\;\;\|M_{g_n}\|=1;\;\;\;M_{g_n}\to 1\;\;\;{\rm weakly\;\;on} \;\;L^2({\R}).
$$
Therefore, by the Fourier transform, we see that
$\hat{g}_n\in L^2({\R})\cap C_0({\R})$ and the convolution operators $\lambda(\hat{g}_n)$ satisfy
$$
0\leq \lambda(\hat{g}_n)\leq 1;\;\;\;\|\lambda(\hat{g}_n)\|=1;\;\;\;\lambda(\hat{g}_n)\to 1\;\;\;{\rm weakly\;\;on} \;\;L^2({\R}).
$$
By the previous two lemmas 
$\hat\tau(\pi(a)(1+D^2)^{-s/2})=\lim_n\hat\tau(\lambda(\hat{g}_n)\pi(bc)(1+D^2)^{-s/2})$. We let
$f_s(t)=(1+t^2)^{-s/2},$ so that 
$\hat\tau(\lambda(\hat{g}_n)\pi(bc)(1+D^2)^{-s/2})=\hat\tau(\lambda(\hat{g}_n)\pi(b)\pi(c)\lambda(\hat{f}_s))$.
If we define $x_n(t)=b^*\hat{g}_n(t)$ and 
$y(t)=c\hat{f}_s(t)$ then $\tilde\pi(x_n)=\pi(b^*)\lambda(\hat{g}_n)$ and $\tilde\pi(y)=\pi(c)\lambda(\hat{f}_s)$
so that
\begin{align*}
&\hat\tau\left(\lambda(\hat{g}_n)\pi(bc)(1+D^2)^{-s/2}\right)
=\hat\tau\left(\tilde\pi(x_n)^*\tilde\pi(y)\right)=\int_{\R}\tau\left(x_n(t)^* y(t)\right)dt
=\int_{\R}\tau\left(\overline{\hat{g}_n(t)}bc\hat{f}_s(t)\right)dt\\
&=\tau(bc)\int_{\R}\overline{\hat{g}_n(t)}\hat{f}_s(t)dt
=\tau(a)\int_{\R}\overline{g_n(t)}f_s(t)dt=\tau(a)\int_{-n}^n(1+t^2)^{-s/2}dt.
\end{align*}
\begin{align*}\mbox{Hence, }\quad\quad\quad\quad
\hat\tau(\pi(a)(1+D^2)^{-s/2})&=\lim_n\tau(a)\int_{-n}^n(1+t^2)^{-s/2}dt=\tau(a)\int_{\R} (1+t^2)^{-s/2}dt.\quad\quad\quad\qedhere
\end{align*}
\end{proof}

\begin{cor}
\label{cor:izza-cor}
Let $a\in ({\rm dom}(\partial_\tau^2))^2 \subset {\rm dom}(\partial_\tau)$ so that 
$a$ is a sum of factors $bc$, where $b,\,c \in {\rm dom}(\partial_\tau^2)$. Then with 
$e=1+a$ invertible in $A^\sim$,
\begin{align*}{\rm Res}_{s=1}\left\{\frac{1}{2}\hat\tau(\pi(e^{-1})[D,\pi(e)](1+D^2)^{-s/2})\right\}
&=\lim_{s\to 1} \frac{1}{2}(s-1)\hat\tau(\pi(e^{-1})[D,\pi(e)](1+D^2)^{-s/2})\\
&=\frac{1}{2\pi i}\tau(e^{-1}\partial(e)).
\end{align*}
\end{cor}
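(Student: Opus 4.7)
The plan is to convert the commutator into an algebra element via Proposition \ref{prop:diff}, apply Lemma \ref{lem:first-integrable} to compute the trace explicitly, and then extract the residue from an elementary beta-function integral. Since $\partial$ annihilates the scalar part of $e=1+a$, Proposition \ref{prop:diff} yields $[D,\pi(e)]=[D,\pi(a)]=\frac{1}{2\pi i}\pi(\partial(a))$, so
\[
\tfrac{1}{2}\hat\tau\bigl(\pi(e^{-1})[D,\pi(e)](1+D^2)^{-s/2}\bigr)=\tfrac{1}{4\pi i}\hat\tau\bigl(\pi(e^{-1}\partial(a))(1+D^2)^{-s/2}\bigr).
\]
Provided we establish $e^{-1}\partial(a)\in ({\rm dom}(\partial_\tau))^2$, Lemma \ref{lem:first-integrable} gives this as $\tfrac{1}{4\pi i}\tau(e^{-1}\partial(a))\int_{\R}(1+t^2)^{-s/2}\,dt$. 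The trigonometric substitution $t=\tan\theta$ identifies
\[
\int_{\R}(1+t^2)^{-s/2}\,dt=\frac{\sqrt{\pi}\,\Gamma((s-1)/2)}{\Gamma(s/2)},
\]
which has a simple pole at $s=1$ with residue $2$, since $\Gamma((s-1)/2)$ contributes the pole while $\Gamma(1/2)=\sqrt\pi$. Combining with $\partial(e)=\partial(a)$ gives
\[
{\rm Res}_{s=1}\Bigl\{\tfrac{1}{2}\hat\tau\bigl(\pi(e^{-1})[D,\pi(e)](1+D^2)^{-s/2}\bigr)\Bigr\}=\tfrac{2}{4\pi i}\tau(e^{-1}\partial(a))=\tfrac{1}{2\pi i}\tau(e^{-1}\partial(e)).
\]

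The real content is the smoothness verification $e^{-1}\partial(a)\in ({\rm dom}(\partial_\tau))^2$. Writing $a=\sum_i b_ic_i$ with $b_i,c_i\in{\rm dom}(\partial_\tau^2)$ gives $\partial(a)=\sum_i(\partial(b_i)c_i+b_i\partial(c_i))$, a finite sum of products in $({\rm dom}(\partial_\tau))^2$. Hence $e^{-1}\partial(a)=\sum_i(e^{-1}\partial(b_i)\cdot c_i+e^{-1}b_i\cdot\partial(c_i))$, so it suffices to show that $e^{-1}x\in{\rm dom}(\partial_\tau)$ whenever $x\in{\rm dom}(\partial_\tau)$. Invertibility of $e$ in $A^\sim$ together with $a\in{\rm dom}(\partial)$ implies $e^{-1}\in 1+{\rm dom}(\partial)$: from $\alpha_t(e^{-1})-e^{-1}=-\alpha_t(e)^{-1}(\alpha_t(e)-e)e^{-1}$ and continuity of inversion, one reads off $\partial(e^{-1})=-e^{-1}\partial(e)e^{-1}$ in operator norm. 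Splitting
\[
\tfrac{1}{t}\bigl(\alpha_t(e^{-1}x)-e^{-1}x\bigr)=\alpha_t(e^{-1})\cdot\tfrac{1}{t}\bigl(\alpha_t(x)-x\bigr)+\tfrac{1}{t}\bigl(\alpha_t(e^{-1})-e^{-1}\bigr)\cdot x,
\]
the first term converges in $\|\cdot\|_\tau$ to $e^{-1}\partial_\tau(x)$ because $\alpha_t(e^{-1})\to e^{-1}$ in operator norm and $(\alpha_t(x)-x)/t\to\partial_\tau(x)$ in $\|\cdot\|_\tau$; the second converges in $\|\cdot\|_\tau$ to $\partial(e^{-1})x$ because $(\alpha_t(e^{-1})-e^{-1})/t\to\partial(e^{-1})$ in operator norm while $x\in A_\tau$ forces $\|yx\|_\tau\leq\|y\|\cdot\|x\|_\tau$ for $y\in A$.

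The main obstacle is this last step: ensuring that left multiplication by $e^{-1}$ upgrades operator-norm difference quotients to $\|\cdot\|_\tau$-convergent difference quotients, which is exactly what is needed to keep $e^{-1}x$ inside the smaller domain ${\rm dom}(\partial_\tau)$ (rather than only the larger ${\rm dom}(\partial)\cap A_\tau$). The strengthened hypothesis $b_i,c_i\in{\rm dom}(\partial_\tau^2)$ rather than merely ${\rm dom}(\partial_\tau)$ is used precisely to ensure that $\partial(b_i)$ and $\partial(c_i)$ themselves lie in ${\rm dom}(\partial_\tau)$, so that every factor appearing in the decomposition of $e^{-1}\partial(a)$ falls within the scope of Lemma \ref{lem:first-integrable}.
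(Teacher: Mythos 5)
Your proof is correct and follows the same overall route as the paper's: use Proposition \ref{prop:diff} to replace $[D,\pi(e)]$ by $\frac{1}{2\pi i}\pi(\partial(e))$, apply Lemma \ref{lem:first-integrable} to reduce the trace to $\tau(e^{-1}\partial(e))\int_\R(1+t^2)^{-s/2}dt$, and then take the residue of the elementary integral (your beta-function computation of ${\rm Res}_{s=1}\int_\R(1+t^2)^{-s/2}dt=2$ is a correct spelling-out of the step the paper merely states).

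The one place you genuinely diverge is in verifying that $e^{-1}\partial(e)$ is an admissible sum of products. The paper writes $e^{-1}=1-f$ with $f\in A_\tau$ and regroups $e^{-1}\partial_\tau(e)$ so that each left factor lies in $A_\tau\subset A_\tau^{1/2}$ and each right factor in ${\rm dom}(\partial_\tau)$, which is precisely the pair of conditions actually exploited in the proofs of the two preceding lemmas (the left factor only ever needs to be a Hilbert--Schmidt-producing element, the right factor only needs one $\partial_\tau$-derivative). You instead prove that left multiplication by $e^{-1}$ preserves ${\rm dom}(\partial_\tau)$, landing the decomposition literally in $({\rm dom}(\partial_\tau))^2$ so that Lemma \ref{lem:first-integrable} applies exactly as stated. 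Your route is slightly heavier: it requires differentiating $e^{-1}$ in operator norm and invoking the Banach $A$-module inequality $\|yx\|_\tau\le\|y\|\,\|x\|_\tau$ (which you should at least note follows from $|yx|\le\|y\|\,|x|$ and operator monotonicity of the square root). In exchange you avoid applying the lemma under a hypothesis weaker than the one it literally states. Both arguments are sound, and both use the strengthened hypothesis $b,c\in{\rm dom}(\partial_\tau^2)$ in the same essential way, to guarantee that the surviving right factor $\partial_\tau(b)$ or $\partial_\tau(c)$ is itself in ${\rm dom}(\partial_\tau)$.
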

\begin{proof}
It suffices to see that $e^{-1}\partial_\tau(e)$ is a finite sum of products 
satisfying the hypotheses of the previous lemma. To this end let $e^{-1}=1-f$
where $f\in A_\tau$. Then
\begin{align*}
e^{-1}\partial_\tau(e)&=(1-f)\partial_\tau(1-bc)
=(f-1)\{\partial_\tau(b) c + b\partial_\tau(c)\}\\
&=- \partial_\tau(b) c - b\partial_\tau(c)+f\partial_\tau(b) c +f b\partial_\tau(c),
\end{align*}
and we note that each left factor
$\partial_\tau(b),b,f\partial_\tau(b),fb \in A_\tau\subset A_\tau^{1/2}$; and each right factor 
$c,\partial_\tau(c)\in{\rm dom}(\partial_\tau)$.
It follows from Proposition \ref{prop:diff} and Lemma \ref{lem:first-integrable} that 
\begin{align*}
\hat\tau\left(\pi(e^{-1})[D,\pi(e)](1+D^2)^{-s/2}\right)
&=\frac{1}{2\pi i}\tau(e^{-1}\partial(e))\int_{\R}(1+t^2)^{-s/2} dt.
\end{align*}
The result follows 
from the fact that ${\rm Res}_{s=1}\int_{\R}(1+t^2)^{-s/2} dt=2$.
\end{proof}
\subsection{Connection with noncommutative integration theory  and the smoothness question}

The remainder of this Section is devoted to explaining how this example fits with the formulation of the nonunital  local index formula as proved in \cite{CGRS2}.
In other words we will prove a version of the Phillips-Raeburn index theorem.
Recall now the notation from Section 2.

\begin{prop}
\label{prop:integrable}
With $(A,\tau,\alpha)$ as above and defining $\B_1(D,1)$ and $\B_2(D,1)$ relative to $(\cn,\,\hat\tau)$, 
we have 
$\pi(A)\cap \B_2(D,1)=\pi(A_\tau^{1/2})$ and $\pi(A)\cap \B_1(D,1)=\pi(A_\tau)$.
\end{prop}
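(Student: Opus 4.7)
The plan is to use Corollary \ref{roc}, which gives $\vf_s|_{M_+}=\|h_s\|_2^2\,\overline\tau|_{M_+}$ on $M=\pi(A)''$ for $s>1$, together with the identifications $\pi(A)\cap M_{\overline\tau}=\pi(A_\tau)$ and $\pi(A)\cap M_{\overline\tau}^{1/2}=\pi(A_\tau^{1/2})$ recorded above. The $\B_2$-equality is essentially immediate, while the $\B_1$-equality requires reducing to the positive case via a span-decomposition argument.

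I first dispatch the $\B_2$ equality. By definition, $\pi(a)\in\B_2(D,1)$ iff for every $s>1$ both $\pi(a)^*\pi(a)=\pi(a^*a)$ and $\pi(a)\pi(a)^*=\pi(aa^*)$ lie in $\cn_{\vf_s,+}$. These are positive elements of $M$, so Corollary \ref{roc} rewrites the conditions as $\tau(a^*a),\tau(aa^*)<\infty$. The tracial property of $\tau$ forces either one to imply the other, and both together are equivalent to $a\in A_\tau^{1/2}$.

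For $\B_1(D,1)=\B_2(D,1)^2$, the inclusion $\pi(A_\tau)\subset\pi(A)\cap\B_1(D,1)$ uses the standard factorization $A_\tau=(A_\tau^{1/2})^2$: for positive $a\in A_\tau$, $a=a^{1/2}\cdot a^{1/2}$ with $a^{1/2}\in A_\tau^{1/2}$, and a general $a\in A_\tau$ is a linear combination of four such positives. Applying the $\B_2$ result factor-by-factor gives $\pi(a)\in\B_2(D,1)\cdot\B_2(D,1)=\B_1(D,1)$. For the reverse inclusion, take $\pi(a)=\sum_i T_iS_i$ with $T_i,S_i\in\B_2(D,1)\subset\cn_{\vf_s}^{1/2}\cap\cn_{\vf_s}^{1/2*}$. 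The paper's own definition $\cn_{\vf_s}=\mathrm{span}((\cn_{\vf_s}^{1/2})^*\cn_{\vf_s}^{1/2})$, combined with $T_iS_i=(T_i^*)^*S_i$ and $T_i^*,S_i\in\cn_{\vf_s}^{1/2}$, gives $T_iS_i\in\cn_{\vf_s}$, hence $\pi(a)\in\bigcap_{s>1}\cn_{\vf_s}$.

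It remains to show that $\pi(a)\in M\cap\bigcap_{s>1}\cn_{\vf_s}$ forces $a\in A_\tau$. Since $\cn_{\vf_s}$ and $\B_1(D,1)$ are both $*$-closed, I handle the real and imaginary parts of $a$ separately and assume $a=a^*$. Then $\pi(a)$ is self-adjoint in $M\cap\cn_{\vf_s}$ with Jordan decomposition $\pi(a)=\pi(a_+)-\pi(a_-)$ inside $M$ (using that $\pi$ intertwines continuous functional calculus on self-adjoint elements), while the span-decomposition of $\cn_{\vf_s}$ produces positives $B_\pm\in\cn_{\vf_s,+}$ with $\pi(a)=B_+-B_-$. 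The main technical hurdle is to reconcile these two decompositions: exploiting that $\hat\tau$ is a genuine trace on $\cn$ and applying the $L^1$-triangle inequality $\hat\tau(|X|)\leq\hat\tau(Y)+\hat\tau(Z)$ for self-adjoint $X=Y-Z$ with $Y,Z\in L^1(\cn,\hat\tau)_+$, after conjugating by the positive operator $(1+D^2)^{-s/4}$, yields $\vf_s(\pi(a_\pm))<\infty$. Corollary \ref{roc} then gives $\tau(a_\pm)<\infty$, so $a\in A_\tau$.
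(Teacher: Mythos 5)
Your $\B_2$-equality argument is correct and marginally cleaner than the paper's: you reduce via Corollary \ref{roc} (which identifies $\vf_s|_{M_+}$ as a multiple of $\overline\tau$), whereas the paper instead appeals to Lemma \ref{lem:H-S} and the identification $\L^2(\cn,\hat\tau)\cong L^2(\R,\L^2(M,\overline\tau))$. Both routes are sound. Likewise your easy inclusion $\pi(A_\tau)\subset\pi(A)\cap\B_1(D,1)$ via the square-root factorization of positives in the Banach $*$-algebra $A_\tau$ is fine.

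The gap is in the final paragraph, at the reverse inclusion for $\B_1$. You conjugate $\pi(a)=B_+-B_-$ by $K:=(1+D^2)^{-s/4}$ and apply the $L^1$-triangle inequality to $K\pi(a)K = KB_+K - KB_-K$. That correctly gives $\hat\tau\big(|K\pi(a)K|\big)<\infty$, equivalently finiteness of $\hat\tau\big((K\pi(a)K)_\pm\big)$. But what you need is $\vf_s(\pi(a_\pm))=\hat\tau\big(K\pi(a_\pm)K\big)<\infty$, and these two pairs of positive operators are genuinely different because $K=\lambda(\hat h_s)$ does \emph{not} commute with $\pi(a)$ (the covariance relation $\lambda(t)\pi(a)\lambda(-t)=\pi(\alpha_t(a))$ prevents it). In particular $|K\pi(a)K|\neq K|\pi(a)|K$, and $K\pi(a_\pm)K$ are not the spectral positive/negative parts of $K\pi(a)K$. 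One can even arrange, in simple $2\times2$-block examples, that $K\pi(a)K$ is trace class while $K\pi(a_+)K$ is not, so no general operator-theoretic argument of this shape can close the gap. This is precisely the subtlety that the paper discharges by citing \cite[Proposition 1.19]{CGRS2} together with Corollary \ref{roc}: that cited result identifies $\B_1(D,1)\cap\pi(A)''$ with the domain of the restricted weight $\vf_s|_{M}$, i.e.\ $M_{\overline\tau}$, handling the positive/negative-part issue at the level of the abstract integration theory. Without an argument of comparable strength (or a citation of it), the inclusion $\pi(A)\cap\B_1(D,1)\subset\pi(A_\tau)$ is not established.
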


\begin{proof}
First, Lemma \ref{lem:H-S} shows that $\pi(A_\tau^{1/2})\subset \B_2(D,1)$. Conversely, if $a\in A$ and
also $\pi(a)\in \B_2(D,1)$, then by definition $\pi(a)(1+D^2)^{-s/4}\in \L^2(\cn,\hat\tau)$ for all $s>1$. As before we 
write $M=\pi(A)''\subset \B(\H_\tau)$ and $\bar\tau$ for the normal extension of $\tau$ to $M$ and since
$\L^2(\cn,\hat\tau)=L^2(\R,\L^2(M,\bar\tau)),$  
we have $\pi(a)\in M_{\bar\tau}^{1/2}.$  Hence $\pi(a) \in M_{\bar\tau}^{1/2}\cap \pi(A) =\pi(A^{1/2}_\tau)$ . 
Thus $\pi(A)\cap \B_2(D,1)=\pi(A_\tau^{1/2})$.

For the final statement, we recall the result in  Corollary \ref{roc} together with the notation given there. Combining
this with \cite[Proposition 1.19]{CGRS2}, we deduce that
$$
\B_1(D,1)\cap \pi(A)''=\bigcap_{s> 1} {\rm dom}(\|h_s\|_2^2\overline\tau)=
{\rm dom}(\overline\tau)=\pi(M_{\bar{\tau}}).
$$ 
Taking the intersection with $\pi(A)$ gives $\B_1(D,1)\cap \pi(A)=\pi(A_\tau)$ as needed.
%
%
%
\end{proof}

The argument of the previous proposition analyses the  integration theory that forms the first ingredient for the local index formula.
 What remains is to find a subalgebra
of ${\rm dom}(\partial_\tau)\subset A$ which yields a smoothly summable spectral triple
in the sense of Definition \ref{delta-phi}.

We recall from Definition \ref{def:define-all} the (partially defined) operators 
$\cn\ni T\mapsto L(T):=(1+D^2)^{-1/2}[D^2,T]$ and $\cn\ni T\mapsto R(T):=[D^2,T](1+D^2)^{-1/2}$.
Also we set $F_D=D(1+D^2)^{-1/2}$.

\begin{lemma}
If $a\in {\rm dom}(\partial^2)$ then $\pi(a)\in {\rm dom}(L)\cap {\rm dom}(R)$ and on the space 
$H_\infty=\cap{\rm dom}(D^k)$
\begin{eqnarray*}
L(\pi(a))&=&\frac{1}{\pi i}F_D\pi(\partial(a)) +\frac{1}{4\pi^2}(1+D^2)^{-1/2}\pi(\partial^2(a))\;\;{\rm and}\\
R(\pi(a))&=&\frac{1}{\pi i}\pi(\partial(a))F_D + \frac{1}{4\pi^2}\pi(\partial^2(a))(1+D^2)^{-1/2}.
\end{eqnarray*} 
\end{lemma}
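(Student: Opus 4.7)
The plan is to deduce everything from Proposition \ref{prop:diff}, the Leibniz rule for commutators, and a single rearrangement that swaps the position of $D$ relative to $\pi(\partial(a))$. Since $a\in{\rm dom}(\partial^2)$ we have $\partial(a)\in{\rm dom}(\partial)$, so Proposition \ref{prop:diff} may be applied twice, yielding
$[D,\pi(a)]=\tfrac{1}{2\pi i}\pi(\partial(a))$ and $[D,\pi(\partial(a))]=\tfrac{1}{2\pi i}\pi(\partial^2(a))$,
both as bounded operators densely defined on $H_\infty$.

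The first step is to write, on the invariant dense subspace $H_\infty$,
$$
[D^2,\pi(a)]=D[D,\pi(a)]+[D,\pi(a)]D=\frac{1}{2\pi i}\bigl(D\,\pi(\partial(a))+\pi(\partial(a))\,D\bigr).
$$
To obtain the expression for $L(\pi(a))=(1+D^2)^{-1/2}[D^2,\pi(a)]$, I would move $D$ to the left of the remaining occurrence of $\pi(\partial(a))$ via $\pi(\partial(a))D=D\,\pi(\partial(a))-[D,\pi(\partial(a))]$. This produces a purely left-$D$ contribution $\tfrac{1}{\pi i}D\,\pi(\partial(a))$ plus a correction term $-\tfrac{1}{2\pi i}[D,\pi(\partial(a))]=\tfrac{1}{4\pi^2}\pi(\partial^2(a))$, the factor $(2\pi i)^{-2}=-1/(4\pi^2)$ being the only subtle point in the sign bookkeeping. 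Multiplying on the left by $(1+D^2)^{-1/2}$, which commutes with $D$ by the functional calculus, and using $F_D=D(1+D^2)^{-1/2}=(1+D^2)^{-1/2}D$, yields the stated formula for $L(\pi(a))$. The computation for $R(\pi(a))=[D^2,\pi(a)](1+D^2)^{-1/2}$ is symmetric: one instead uses $D\,\pi(\partial(a))=\pi(\partial(a))D+[D,\pi(\partial(a))]$ to push $D$ to the right, then multiplies on the right by $(1+D^2)^{-1/2}$.

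Finally, I would address the domain assertion $\pi(a)\in{\rm dom}(L)\cap{\rm dom}(R)$. The right-hand sides of the two identities are manifestly bounded operators in $\mathcal N$: $F_D$ and $(1+D^2)^{-1/2}$ are bounded by functional calculus, and $\pi(\partial(a)),\pi(\partial^2(a))$ are bounded since $\partial(a),\partial^2(a)\in A$. Hence $(1+D^2)^{-1/2}[D^2,\pi(a)]$ and $[D^2,\pi(a)](1+D^2)^{-1/2}$, initially defined on $H_\infty$, extend by continuity to bounded operators on $L^2(\R,H_\tau)$, which is the content of $\pi(a)$ lying in ${\rm dom}(L)\cap{\rm dom}(R)$ in the sense of Definition \ref{def:define-all}.

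The main obstacle is really only the sign accounting from the two factors of $(2\pi i)^{-1}$ and the correct choice of commutator rearrangement (left vs.\ right placement of $D$) appropriate to each of $L$ and $R$; the rest is the Leibniz rule and boundedness. I would not need any additional input beyond Proposition \ref{prop:diff} and the standard functional calculus identities for $D$.
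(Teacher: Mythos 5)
Your derivation of the $L$ identity is correct and coincides in every step with the paper's proof: expand $[D^2,\pi(a)]$ by Leibniz, substitute $[D,\pi(a)]=\tfrac{1}{2\pi i}\pi(\partial(a))$, move the loose $D$ to the left with one more commutator, and absorb $(1+D^2)^{-1/2}D=F_D$. For $R$ you diverge slightly from the paper, which does not redo the computation but instead invokes the adjoint relation $R(\pi(a))^* = -L(\pi(a^*))$; your plan to redo the rearrangement with $D$ pushed to the right is an equally valid (and arguably more transparent) alternative. Your treatment of the domain assertion via boundedness of the right-hand sides and density of $H_\infty$ is fine and is in fact more explicit than the paper, which leaves it implicit.

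However, there is one point you should not wave past with the word ``symmetric.'' When you push $D$ to the \emph{right} you write $D\pi(\partial(a))=\pi(\partial(a))D+[D,\pi(\partial(a))]$, which carries a $+$ sign on the commutator, whereas in the $L$ case the rewriting $\pi(\partial(a))D=D\pi(\partial(a))-[D,\pi(\partial(a))]$ carries a $-$ sign. Carrying this through, the $\pi(\partial^2(a))$ term in $[D^2,\pi(a)]$ appears with coefficient $+\tfrac{1}{(2\pi i)^2}=-\tfrac{1}{4\pi^2}$, so your method gives
$R(\pi(a))=\tfrac{1}{\pi i}\pi(\partial(a))F_D - \tfrac{1}{4\pi^2}\pi(\partial^2(a))(1+D^2)^{-1/2}$,
with a \emph{minus} sign, not the $+$ displayed in the lemma. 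The paper's own route confirms this: taking the adjoint of $-L(\pi(a^*))$, using that $\overline{1/(\pi i)}=-1/(\pi i)$ while $1/(4\pi^2)$ is real, likewise yields the minus sign. So the $+\tfrac{1}{4\pi^2}$ in the stated $R$ formula is a typographical slip in the paper, and you should actually write out the $R$ computation rather than gesture at symmetry: the two rearrangements are \emph{not} sign-symmetric, and doing the arithmetic is exactly what reveals the corrected sign. (Since the paper only ever uses the $L$ identity in Proposition \ref{prop:smooth}, this slip is harmless downstream, but your proof should record the correct sign.)
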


\begin{proof}
The following calculation takes place on $H_\infty=\cap_k{\rm dom}(D^k)$ where we may 
commute $D$ with bounded functions of $D$. The calculation for $R$ is similar
as $R(\pi(a))^*=-L(\pi(a^*))$.
\begin{align*}
L(\pi(a))&=(1+D^2)^{-1/2}[D^2,\pi(a)]=(1+D^2)^{-1/2}\{D[D,\pi(a)]+[D,\pi(a)]D\}\\
&=\frac{1}{2\pi i}F_D\pi(\partial(a))+\frac{1}{2\pi i}(1+D^2)^{-1/2}\pi(\partial(a))D\\
&=\frac{1}{2\pi i}F_D\pi(\partial(a))+\frac{1}{2\pi i}(1+D^2)^{-1/2}\left([\pi(\partial(a)),D]+D\pi(\partial(a))\right)\\
&=\frac{1}{\pi i}F_D\pi(\partial(a))+\frac{1}{4\pi^2}(1+D^2)^{-1/2}\pi(\partial^2(a)).\qedhere
\end{align*}
\end{proof}

\begin{prop}
\label{prop:smooth}
If $a\in\bigcap_{n=1}^{\infty} {\rm dom}(\partial^n)$ then 
$\pi(a)\in \bigcap_{l,k}{\rm dom}({R}^l\circ {L}^k)$. 
Hence, by the equality of $\bigcap_{l,k}{\rm dom}({R}^l\circ {L}^k)$ and 
$\bigcap_{n=1}^{\infty} {\rm dom}(\delta^n)$, see \cite{CPRS2}, 
if $a\in A$ is smooth in the sense of the action $\alpha$
of $\R$ on $A$ then $\pi(a)$ is smooth in the sense of the derivation $\delta.$
\end{prop}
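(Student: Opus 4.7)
The plan is to argue by induction, driven by the explicit formulas of the preceding lemma. Set $F_D = D(1+D^2)^{-1/2}$ as above. The structural observation I would exploit is the following \emph{commutation property}: if $B$ is any bounded Borel function of $D$ (for instance $F_D$ or $(1+D^2)^{-1/2}$) and $\pi(b)\in{\rm dom}(L)$, then $[D^2,B]=0$ on $H_\infty$ and hence
\begin{equation*}
L(B\pi(b)) = B\,L(\pi(b)), \qquad L(\pi(b)B) = L(\pi(b))\,B,
\end{equation*}
with the symmetric identities for $R$. These follow directly from $L(X)=(1+D^2)^{-1/2}[D^2,X]$, $R(X)=[D^2,X](1+D^2)^{-1/2}$, and the Leibniz rule for $[D^2,\cdot]$.

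First I would prove by induction on $k$ that for every $a\in\bigcap_n {\rm dom}(\partial^n)$,
\begin{equation*}
L^k(\pi(a)) = \sum_{j} B_j\, \pi(\partial^{m_j}(a)),
\end{equation*}
a \emph{finite} sum in which each $B_j$ is a bounded operator, namely a product of factors of the form $F_D$ and $(1+D^2)^{-1/2}$, and each integer $m_j$ satisfies $1\leq m_j\leq 2k$. The base case $k=1$ is exactly the preceding lemma. For the inductive step, I apply $L$ termwise; the commutation property pulls each $B_j$ to the left, leaving $L(\pi(\partial^{m_j}(a)))$ inside, which expands via the lemma applied to $\partial^{m_j}(a)\in\bigcap_n{\rm dom}(\partial^n)$ into two new terms with derivative orders $m_j+1$ and $m_j+2$.

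Next I would iterate with $R$ from the right, using the analogous right-multiplication commutation property: $R$ passes through any left factor $B_j$ and only acts on $\pi(\partial^{m_j}(a))$, which the lemma expands into terms of the shape $\pi(\partial^{m_j'}(a))\,C$ with $C\in\{F_D,(1+D^2)^{-1/2}\}$. Induction on $l$ then yields
\begin{equation*}
R^l L^k(\pi(a)) = \sum_j B_j\, \pi(\partial^{m_j}(a))\, C_j,
\end{equation*}
again a finite sum with $B_j,C_j$ bounded functions of $D$ and $m_j\leq 2(k+l)$. Since $a$ is smooth, every $\partial^{m_j}(a)$ lies in $A$, so $\pi(\partial^{m_j}(a))$ is bounded, the displayed expression defines an element of $\cn$, and consequently $\pi(a)\in{\rm dom}(R^l\circ L^k)$ for all $k,l\geq 0$. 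The final assertion of the proposition is then immediate from the cited identification $\bigcap_{l,k}{\rm dom}(R^l\circ L^k)=\bigcap_n{\rm dom}(\delta^n)$.

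I do not anticipate any substantive obstacle: once the commutation property is noted, the argument is bookkeeping on a finite sum whose size grows at most linearly at each step. The only mildly delicate point is to interpret the lemma's identities (stated on $H_\infty$) as genuine equalities of bounded operators in $\cn$ at each iterate, which is legitimate precisely because the smoothness hypothesis keeps all the $\pi(\partial^{m_j}(a))$ bounded at every stage.
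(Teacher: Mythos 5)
Your proposal is correct and follows essentially the same route as the paper: the key observation that $L$ and $R$ pass through bounded Borel functions of $D$, combined with the explicit base-case formulas of the preceding lemma, drives an induction showing that $R^lL^k(\pi(a))$ is a finite sum of terms $g(D)\,\pi(\partial^m(a))\,f(D)$ with $g,f$ bounded functions of $D$ and $m\le 2(k+l)$. The paper performs a single induction on $n=l+k$ (using the commutativity of $L$ and $R$), whereas you first exhaust $L^k$ and then apply $R^l$; the bookkeeping is slightly different but the content is identical, and your closing remark about upgrading the $H_\infty$ identities to equalities in $\cn$ is a correct and useful clarification of a point the paper leaves implicit.
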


\begin{proof}
It suffices to prove the following fact by induction on 
$n=l+k$: if $a\in\bigcap_{j=1}^{\infty} {\rm dom}(\partial^j)$ 
then ${R}^l\circ {L}^k(\pi(a))$ is a finite sum of 
terms of the form $g(D)\pi(b)f(D)$ where $g,f$ 
are continuous bounded functions on $\R$ and 
$b=\partial^m(a)$ is a smooth element in $A$ with $m\leq 2n$.

When $n=1$ we are looking at 
$L(\pi(a))$ or $R(\pi(a))$ which have the 
correct form by the previous lemma. Now if the 
result holds for some $n=(l+k)\geq 1$ then 
we obtain the case $n+1$ by applying either 
$L$ or $R$ to this case since $L$ 
and $R$ commute. By the inductive hypothesis
it suffices to apply $L$ or $R$ to a term of the 
form $g(D)\pi(b)f(D).$ We apply 
$L$ as the other case is similar. A computation like those above yields
\begin{eqnarray*}
L(g(D)\pi(b)f(D))&=&g(D)\left\{\frac{1}{\pi i}F_D\pi(\partial(b))+\frac{1}{4\pi^2}(1+D^2)^{-1/2}\pi(\partial^2 (b))\right\}f(D).
\end{eqnarray*}
Since $b=\partial^m(a)$,   $\partial(b)=\partial^{m+1}(a)$  and  
$\partial^2(b)=\partial^{m+2}(a)$, the induction is complete
\end{proof}

\begin{rem*}
{\rm Proposition \ref{prop:smooth} shows that with $\A\subset A$ the smooth elements for the action of $\alpha$,
the  spectral triple $(\A,L^2(\R,\H_\tau),D)$ is $QC^\infty$ or {\bf smooth}. 
However we need more than this to deal 
with integrability as well as smoothness. The next result combines our smoothness and integrability results,
and recovers the Phillips-Raeburn and Lesch index theorems.}
\end{rem*}

\begin{thm}
Let ${\mathcal C}\subset A_\tau$ be the $*$-algebra generated by
$$\{ab\in A_\tau: \partial^k_\tau(a),\,\partial^k_\tau(b)\in A_\tau^{1/2}\ \ {\rm for\ all}\ k=0,1,2,\dots\}.
$$
Then $({\mathcal C},L^2(\R,H_\tau),D)$ is a smoothly summable semifinite 
spectral triple relative to $({\mathcal N},\hat\tau)$ with
spectral dimension 1. The spectral dimension is isolated and the formula
$$
{\mathcal C}\ni a_0,a_1\mapsto \phi_1(a_0,a_1)
:=\frac{1}{2}{\rm Res}_{s=1}\hat\tau(a_0[D,a_1](1+D^2)^{-s/2})
$$
defines a $(b,B)$ cocycle for ${\mathcal C}$. 
Moreover, for $P=\chi_{[0,\infty)}(D)$ and
$u=1+a$ unitary with $a\in {\mathcal C}$,
$$
{\rm Index}_{\hat\tau}(PuP)=- \phi_1(u^*,u)=- \frac{1}{2\pi i}\,\tau(u^*\partial(u)).
$$
\end{thm}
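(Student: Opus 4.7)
The plan is to verify the hypotheses of the odd local index formula of \cite{CGRS2} for the triple $(\mathcal{C}, L^2(\R, H_\tau), D)$, and then to evaluate the residue cocycle $\phi_1$ explicitly on $(u^*, u)$.

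First I would check the spectral triple axioms for $\mathcal{C}$: the commutator condition is Proposition \ref{prop:diff}, and $\tau$-compactness of $\pi(a)(1+D^2)^{-1/2}$ is a consequence of the Hilbert--Schmidt estimate of Lemma \ref{lem:H-S}. For smooth summability at $p=1$ (Definition \ref{delta-phi}) the goal is $\mathcal{C} \cup [D,\mathcal{C}] \subset \B_1^\infty(D,1)$. A typical generator of $\mathcal{C}$ is $ab$ with $\partial_\tau^k(a),\,\partial_\tau^k(b) \in A_\tau^{1/2}$ for all $k \geq 0$. By Proposition \ref{prop:integrable}, each of $\pi(\partial_\tau^k(a))$ and $\pi(\partial_\tau^k(b))$ lies in $\B_2(D,1)$, hence products of any two such elements lie in $\B_1(D,1) = \B_2(D,1)^2$. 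For the higher $\delta$-derivatives, Proposition \ref{prop:smooth} writes $\delta^n(\pi(ab))$ as a finite sum of terms $g(D)\pi(c)f(D)$ in which $c$ is a $\partial$-derivative of $ab$, so that by the Leibniz rule $c$ is itself a sum of products $\partial^j_\tau(a)\partial^k_\tau(b)$. Using that $\B_2(D,1)$ is stable under left and right multiplication by bounded Borel functions of $D$ (immediate from the definition of $\vf_s$), each such summand factors as a product of two elements of $\B_2(D,1)$ and hence lies in $\B_1(D,1)$. The same argument applied to $[D,\pi(ab)] = \frac{1}{2\pi i}\pi(\partial(ab))$ completes the verification of smooth summability.

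The spectral dimension equals $1$ by Lemma \ref{lem:first-integrable}, which gives $\hat\tau(\pi(ab)(1+D^2)^{-s/2}) = \tau(ab)\int_\R (1+t^2)^{-s/2} dt$, convergent iff $s > 1$. Isolated spectral dimension in the sense of \cite{CGRS2} reduces in this example to the meromorphy of $s \mapsto \int_\R (1+t^2)^{-s/2} dt$, a standard Beta-function integral with a single simple pole at $s=1$ of residue $2$. With smooth summability and isolated spectral dimension established, the general odd local index theorem of \cite{CGRS2} applies: the residue formula $\phi_1$ is a $(b,B)$-cocycle on $\mathcal{C}$, and it computes ${\rm Index}_{\hat\tau}(PuP) = -\phi_1(u^*, u)$. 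The final identification $\phi_1(u^*, u) = \frac{1}{2\pi i}\tau(u^*\partial(u))$ is Corollary \ref{cor:izza-cor} applied to $e = u$, since $u = 1 + a$ with $a \in \mathcal{C} \subset ({\rm dom}(\partial_\tau^2))^2$.

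The main obstacle I anticipate is organizing the smooth summability step: one has to track both the Leibniz expansion of $\partial^m(ab)$ and the polynomial expression for $\delta^n$ in the operators $L$ and $R$ simultaneously, and at each stage convert the mere boundedness established in Proposition \ref{prop:smooth} into the $\B_1(D,1)$-membership required by Definition \ref{delta-phi}. The closure of $\mathcal{C}$ under all $\partial_\tau^k$ with values in $A_\tau^{1/2}$ is exactly what makes this bookkeeping go through; everything else follows routinely from the machinery already in place.
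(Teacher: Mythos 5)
Your proposal is correct and follows essentially the same route as the paper's proof, relying on the same ingredients (Propositions \ref{prop:integrable} and \ref{prop:smooth}, Lemma \ref{lem:first-integrable}, Corollary \ref{cor:izza-cor}, and the odd local index theorem of \cite{CGRS2}). You have usefully spelled out the smooth-summability step that the paper leaves terse, namely how the $g(D)\pi(\partial^m(ab))f(D)$ decomposition from Proposition \ref{prop:smooth} combines with the stability of $\B_2(D,1)$ under multiplication by bounded Borel functions of $D$ and the factorisation $\B_1=\B_2^2$ to yield $\delta^n(\pi(ab))\in\B_1(D,1)$.
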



\begin{proof}
From Proposition \ref{prop:integrable}, each $\pi(\partial_\tau^k(a))$, $a\in \mathcal C$ is an element of 
$\B_1(D,1)$, and hence $\pi({\mathcal C})\subset \B_1^\infty(D,1)$. Also since $[D,\pi({\mathcal C})]\subset \pi({\mathcal C})$, we have $[D,\pi({\mathcal C})]\subset \B_1^\infty(D,1)$.
By \cite[Proposition 3.16]{CGRS2}, the spectral triple is smoothly summable 
with spectral dimension 1. That the spectral dimension is isolated follows
from the fact that only one zeta function arises in the local index formula, 
and so (see \cite{CGRS2}) is guaranteed to have
at worst a simple pole at $s=1$. All the remaining claims follow from Corollary \ref{cor:izza-cor} and
the proof of the local index formula in \cite{CGRS2}.
\end{proof}

Our result here shows that an important class of examples fall into the framework of \cite{CGRS2}.
Notice that in this case our formula involves the path $D+tu[D,u^*]$ which 
is generically not a path of (Breuer-)Fredholm operators. The same issue arises in general as can be seen from \cite{CGRS2} and the resolution of this 
apparent difficulty
in general will be to replace this path by one in the `double' which is introduced in the next Section.  Using the double it is straightforward to prove as in \cite{CGRS2} that
$
\chi_{[0,\infty)}(D)-\chi_{[0,\infty)}(uDu^*)\in \K(\cn,\hat\tau).
$
Hence $\chi_{[0,\infty)}(D)$, $\chi_{[0,\infty)}(uDu^*)$ form a Fredholm pair. What we have done in this Section is show that
$$
{\rm Index}(\chi_{[0,\infty)}(D)\chi_{[0,\infty)}(uDu^*))={\rm Index}(\chi_{[0,\infty)}(D)\,u\,\chi_{[0,\infty)}(D))
$$
is given by the local index formula as a residue that is recognisably the Phillips-Raeburn-Lesch formula, \cite{L,PR}.

In the next Section we will attempt to generalise this strategy, namely to go from the local index formula
to a spectral flow formula for paths of the form $D+tu[D,u^*]$.
As noted in the introduction, we leave open the possibility of a definition and computation of spectral
flow for paths in our affine space of perturbations of $\D$ where the endpoints
are not unitarily equivalent.
%

\section{From the resolvent cocycle to the spectral flow formula}
\label{sec:spec-flow}

\subsection{Spectral flow and the index}

To place our results in their proper setting we need some
background from \cite{Ph1,Ph2,PR,CPRS3}. Let
$\pi:{\mathcal N}\to {\mathcal N}/{\mathcal K_{\cn}}$
be the canonical mapping onto the Calkin algebra. A Breuer-Fredholm operator is one that
maps to an invertible operator under $\pi$.  The theory of Breuer-Fredholm operators  
for the case where $\cn$ is not a factor is developed
in \cite{PR,CPRS3}, by analogy with the 
factor case of Breuer, \cite{B1,B2}. 

We say that an unbounded densely defined self-adjoint operator $D$ on $\HH$ is 
a Breuer-Fredholm operator  if $F_D:=D(1+D^2)^{-1/2}$ is 
 Breuer-Fredholm in $\mathcal N$.
Recall that the Breuer-Fredholm index of
a Breuer-Fredholm operator $F$ is defined by
$$
{\rm Index}_\tau(F)=\tau(Q_{\ker F})-\tau(Q_{{\rm coker} F})
$$
where $Q_{\ker F}$ and $Q_{{\rm coker} F}$ are the projections onto
the kernel and cokernel of $F$. The Breuer-Fredholm index is in general real-valued.

We use the function $sign$ defined by $sign(t)=1$ for $t\geq 0$ and $sign(t)=-1$ for $t<0$.

\begin{defn}
If $\{F_t\}$ is a norm continuous path of self-adjoint Breuer-Fredholm
operators
in $\mathcal N$, then the definition of the {\bf spectral flow} of the
path, $sf_\tau(\{F_t\})$ is based on the following sequence of observations
in \cite{Ph2}:

\noindent 1. The function $t\mapsto sign(F_t)$ is typically discontinuous as is
the
projection-valued mapping $t\mapsto P_t=\frac{1}{2}(sign(F_t)+1)$.

\noindent 2. However,  $t\mapsto \pi(P_t)$ is norm continuous.

\noindent 3. If $P$ and $Q$ are projections in $\mathcal N$ and
$||\pi(P)-\pi(Q)||<1$
then $PQ:Q\HH \to P\HH$ is a Breuer-Fredholm operator and so
${\rm Index}_\tau(PQ)\in {\R}$ is well-defined. (This needs
\cite[Section 3]{CPRS3}.)

\noindent 4. If we partition the parameter interval of $\{F_t\}$ so
that the $\pi(P_t)$ do not vary much in norm on each subinterval
of the partition then 
$
sf_\tau(\{F_t\}):=\sum_{i=1}^n
{\rm Index}_\tau(P_{t_{i-1}}P_{t_i})
$
is a well-defined and (path-) homotopy-invariant real number which
agrees with the usual notion of spectral flow in the type $I_\infty$
case.

\noindent 5. Let $\{D_t\}$ be a path of unbounded Breuer-Fredholm operators such that 
the path $\{(F_D)_t\}$ is a norm continuous path of Breuer-Fredholm operators. 
We define the spectral flow of the path $\{D_t\}$ to be the spectral flow of the path
$\{(F_D)_t\}$.  We  observe that this is an integer in the
$I_\infty$ case and a real
number in the general semifinite case.
\end{defn}

Fix an unbounded self-adjoint Breuer-Fredholm operator $D$, and
let $P$ denote the projection onto the non-negative spectral subspace of $D$.
Suppose that $u$ is a unitary in $\cn$ such that $D_t:=D+tu[D,u^*]$ is a path of Breuer-Fredholm
operators such
that $F_t:=F_{D+tu[D,u^*]}$ is a norm continuous path and $F_1-F_0$ is compact. 
In this special case we denote the spectral flow by 
$
sf_\tau(D,uDu^*):=sf_\tau(\{D_t\}):=sf_\tau(\{F_t\}).
$
That is, the spectral flow along $\{D_t\}$ is defined to be $sf_\tau(\{F_t\})$
and by \cite{CP1} this is the Breuer-Fredholm index of
$PuPu^*$. (Note that $sign(F_1)=2uPu^*-1$ and 
since we assume $sign(F_1)-sign(F_0)=2(uPu^*-P)$ is compact,
$PuPu^*$ is certainly Breuer-Fredholm from 
$uPu^*\HH \to P\HH$.) 
Now, \cite[Appendix B]{PR}, we have ${\rm Index}_\tau(PuPu^*)={\rm Index}_\tau(PuP)$.
Hence $sf_\tau(D,uDu^*)={\rm Index}_\tau(PuP)$.

All of the above works well when we have $(1+D^2)^{-1/2}$ compact, for then
one can show that with $D_t=D+tu[D,u^*]$, the path $F_t$ is indeed a continuous 
path of Breuer-Fredholms. When the resolvent of $D$ is not compact, we require additional
assumptions, as in the next result.

\begin{theorem}
\label{SF=IND}
Let $(\A,\HH,\D)$  be an odd nonunital semifinite spectral triple relative to $(\cn,\tau)$
(no smoothness assumptions) with {\bf $\D$ invertible}  and let $\A^\sim$ denote $\A$ with a unit adjoined. 
Let $u\in\A^\sim$ be a unitary
such that 
$[\D,u](1+\D^2)^{-1/2}$ is compact.
Setting  $P=\chi_{[0,\infty)}(\D)$, we have
$$
sf_\tau(\D, u\D u^*)= {\rm Index }_{\tau} \big(P\,uP\big).
$$ 
\end{theorem}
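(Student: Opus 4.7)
The plan is to reduce the statement to Phillips' characterization of spectral flow via the path of bounded transforms $F_{\D_t} := \D_t(1+\D_t^2)^{-1/2}$, where $\D_t := \D + t\,u[\D,u^*]$. Once I verify that $\{F_{\D_t}\}_{t\in[0,1]}$ is a norm-continuous path of $\tau$-Breuer--Fredholm operators in $\cn$ with $F_{\D_1} - F_{\D_0} \in \K(\cn,\tau)$, the definition of $sf_\tau$ combined with \cite{CP1} and \cite[Appendix B]{PR} will give
\[
sf_\tau(\D,u\D u^*) \;=\; {\rm Index}_\tau(PuPu^*) \;=\; {\rm Index}_\tau(PuP).
\]

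The verification splits into three steps. First, invertibility of $\D$ gives a constant $c > 0$ with $|\D| \geq c$, so $\sigma(F_\D) \subset [-1,-c'] \cup [c',1]$ with $c' = c/\sqrt{1+c^2}$, and $F_\D$ is already invertible (not merely Breuer--Fredholm) in $\cn$. Second, I would prove $[F_\D, u] \in \K(\cn,\tau)$ via the Baaj--Julg-style integral representation
\[
F_\D = \tfrac{2}{\pi}\int_0^\infty \D(1+s^2+\D^2)^{-1}\,ds,
\]
after which the telescoping $\D(1+s^2+\D^2)^{-1}\D = 1 - (1+s^2)(1+s^2+\D^2)^{-1}$ expresses $[F_\D,u]$ as a single integral whose integrand at each $s$ is a product involving the $\tau$-compact factor $[\D,u](1+\D^2)^{-1/2}$ sandwiched between bounded operators whose norms produce an integrable $O(1/(1+s^2))$ tail; since $\K(\cn,\tau)$ is norm-closed, the integral converges in the compact ideal. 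Invertibility of $F_\D$ then lets me realize $P = \chi_{[0,\infty)}(F_\D)$ as a continuous function of $F_\D$ on $\sigma(F_\D)$, and Stone--Weierstrass gives $[P,u] \in \K(\cn,\tau)$. Third, the same integral machinery applied to the second resolvent identity for $(1+s^2+\D_t^2)^{-1} - (1+s^2+\D^2)^{-1}$, with the bounded perturbation $B := u[\D,u^*]$, yields simultaneously the norm continuity of $t \mapsto F_{\D_t}$ and the $\tau$-compactness of $F_{\D_t} - F_\D$, uniformly in $t \in [0,1]$; each $F_{\D_t}$ is therefore Breuer--Fredholm as a $\tau$-compact perturbation of the invertible $F_\D$.

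With these properties in hand, the Phillips definition of spectral flow applies directly: $sign(F_{\D_0}) = 2P - 1$ and $sign(F_{\D_1}) = 2uPu^* - 1$ differ by the $\tau$-compact operator $2(uPu^* - P) = -2[u,P]u^*$, so $PuPu^*$ restricts to a genuine Breuer--Fredholm operator from $uPu^*\H$ to $P\H$, and by \cite{CP1} its index equals $sf_\tau(\{F_{\D_t}\}) = sf_\tau(\D,u\D u^*)$; the identification ${\rm Index}_\tau(PuPu^*) = {\rm Index}_\tau(PuP)$ is then \cite[Appendix B]{PR}. The main obstacle is the uniformity in step three: although pointwise operator-norm bounds on the resolvent integrand are routine, realising $F_{\D_t} - F_\D$ as a norm-convergent integral \emph{in the ideal} $\K(\cn,\tau)$, uniformly in $t \in [0,1]$, requires threading the single compactness hypothesis $[\D,u](1+\D^2)^{-1/2} \in \K(\cn,\tau)$ through each resolvent manipulation while keeping estimates of the flavour of Lemma \ref{lem4} uniform in both $t$ and $s$.
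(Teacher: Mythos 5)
Your proposal follows essentially the same route as the paper. Both arguments turn on an integral representation of the bounded transform and the single hypothesis that $[\D,u](1+\D^2)^{-1/2}$ is $\tau$-compact. You work with the commutator $[F_\D,u]$ and the Baaj--Julg integral $F_\D=\tfrac{2}{\pi}\int_0^\infty\D(1+s^2+\D^2)^{-1}ds$; the paper works with the difference $F_\D-F_{u\D u^*}$ and the fractional--power integral $(1+\D^2)^{-1/2}=\tfrac{1}{\pi}\int_0^\infty\lambda^{-1/2}(\lambda+1+\D^2)^{-1}d\lambda$. Since $F_\D-F_{u\D u^*}=[F_\D,u]u^*$ and the two integrals coincide after $\lambda=s^2$, the compactness steps are the same calculation in different notation: one splits off $[\D,u](1+\D^2)^{-1/2}$, observes that the remaining factors such as $(1+\D^2)^{1/2}(\lambda+1+\D^2)^{-1}$ or $(1+\D^2)^{1/2}\D R_s$ are uniformly bounded, and checks a scalar tail of size $O((1+s^2)^{-1})$ (respectively $O(\lambda^{-1/2}(1+\lambda)^{-1})$). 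So your Steps 1 and 2 are correct and match the paper.

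Where you part company with the paper is Step 3, and this is precisely the place where you flag ``the main obstacle.'' The paper does not attempt to show that $t\mapsto F_{\D_t}$ is a norm-continuous path of Breuer--Fredholm operators, nor does it need to. Once $F_\D$ and $F_{u\D u^*}$ are both \emph{invertible} (Step 1 plus the fact that $u\D u^*$ is unitarily equivalent to $\D$) and their difference is compact, the sign-projections $P=\tfrac12(1+\mathrm{sign}\,F_\D)$ and $uPu^*=\tfrac12(1+\mathrm{sign}\,F_{u\D u^*})$ differ by a compact operator (continuous functional calculus passes through the quotient onto the Calkin algebra since both spectra avoid a neighbourhood of zero). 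Phillips' definition of spectral flow is then applied directly to this Fredholm pair of projections, giving $sf_\tau(\D,u\D u^*)=\mathrm{Index}_\tau\big(P\cdot uPu^*\big)$, and \cite[Appendix B]{PR} converts this to $\mathrm{Index}_\tau(PuP)$. If one insists on a path, the straight line $(1-t)F_\D+tF_{u\D u^*}$ of bounded operators works: it is trivially norm-continuous, and every point on it is a compact perturbation of the invertible $F_\D$, hence Breuer--Fredholm. In contrast, establishing that $F_{\D_t}-F_\D$ is compact for each $0<t<1$, as you propose, would require compactness of quantities like $u[\D,u^*](1+\D_t^2)^{-1/2}$, which does not follow from $[\D,u](1+\D^2)^{-1/2}\in\K(\cn,\tau)$ alone in the ``no smoothness'' setting of this theorem. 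So Step 3 is not just incomplete; it is unnecessary, and pursuing it literally would get stuck. Dropping it leaves you with exactly the paper's proof modulo the cosmetic Baaj--Julg/fractional-power difference.
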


\begin{proof}
As $\D$ is invertible, it is Fredholm.
Also
the bounded operator $F_\D= \D(1+\D^2)^{-1/2}$ is invertible and so Fredholm.
Consider the difference
$$
F_\D-F_{u\D u^*}=
\D(1+\D^2)^{-1/2}-(u\D u^*-\D)(1+u\D^2u^*)^{-1/2}
-\D(1+u\D^2u^*)^{-1/2}.
$$
The middle term in this expression is
$[\D,u](1+\D^2)^{-1/2}u^*$ which is compact by assumption.
Next we combine
the remaining two terms in the previous equation as
$$
\D[(1+\D^2)^{-1/2}- (1+u\D^2u^*)^{-1/2}],
$$
and use the integral formula for fractional powers from \cite{CP1} to obtain
$$
\D[(1+\D^2)^{-1/2}- (1+u\D^2u^*)^{-1/2}]=
\frac{1}{\pi}
\D\int_0^\infty \lambda^{-1/2}[(\lambda+1+\D^2)^{-1}-
(\lambda+1+u\D^2u^*)^{-1}]d\lambda
$$
$$
=\frac{1}{\pi} \int_0^\infty \lambda^{-1/2}
\D(\lambda+1+\D^2)^{-1}(\D u[\D,u^*]+u[\D,u^*]\D +(u[\D,u^*])^2)
u(\lambda+1+\D^2)^{-1}u^*\,d\lambda.
$$
By  \cite[Lemma 6, part (2), Appendix A]{CP1}, the 
integral above converges in norm. Since $u[\D,u^*]u=-[\D,u]$, 
the three operator terms in the integrand can be written as:
$$
\D^2(\lambda+1+\D^2)^{-1}u[\D,u^*]u(\lambda+1+\D^2)^{-1}u^*
=-\D^2(\lambda+1+\D^2)^{-1}[\D,u](\lambda+1+\D^2)^{-1}u^*;
$$
$$
\D(\lambda+1+\D^2)^{-1}u[\D,u^*][\D,u](\lambda+1+\D^2)^{-1}u^*
-\D(\lambda+1+\D^2)^{-1}[\D,u](\lambda+1+\D^2)^{-1}\D u^*;
$$
$$
-\D(\lambda+1+\D^2)^{-1}u[\D,u^*][\D,u](\lambda+1+\D^2)^{-1}u^*.
$$
Inspection now shows that each of these terms
contains a resolvent times  $[\D,u]$, which by our assumptions is compact.
Thus the integrand is compact.
So $F_\D-F_{u\D u^*}$ is compact since the integral converges in norm. 
Therefore the spectral flow is indeed given
as the index of $PuP$ by the definitions and results in \cite{Ph2} (see also \cite{BCPRSW} for the extension to the non-factor case).
\end{proof}

We can always put ourselves into the situation where $\D$ is invertible, using the
following doubling construction due originally to Connes \cite{Co1}.

\begin{definition} 
\label{def:double}
Let $(\A,\HH,\D)$ be a  semifinite spectral triple relative to $(\cn,\tau)$. For any
$\mu>0$,  define the `double' of
$(\A,\HH,\D)$ to be the  semifinite
spectral triple $(\A,\HH^2,\D_\mu)$ relative to $(M_2(\cn),\tau\otimes\tr_2)$, 
with $\HH^2:=\HH\oplus\HH$ and with
the action of $\A$ and $\D_\mu$ given by
\ben
\D_\mu:=\bma \D & \mu\\ \mu & -\D\ema,\ \ \ \ a\mapsto \hat a:=\bma a & 0\\ 0 & 0\ema,
\ \ \forall a\in\A.
\een
\end{definition}

{\bf Remark.} Whether $\D$ is invertible or not, $\D_\mu$ always is invertible,
and $F_\mu=\D_\mu|\D_\mu|^{-1}$
has square 1. This is the chief reason for introducing this construction.

We also need to extend the action of $M_n(\A^\sim)$ on $(\H\oplus\H)\otimes\C^n$,  
in a compatible way with the extended action of $\A$ on $\H\oplus\H$. So, for a generic 
element $b\in M_n(\A^\sim)$,  we let
\begin{equation}
\label{ext-A-sim}
\hat b:= \bma b & 0\\ 0 & {\bf 1}_b\ema\in M_{2n}(\cn),
\end{equation}
with ${\bf 1}_b:=\rho^n(b)\,{\rm Id}_{M_n(\cn)}$, where $\rho^n:M_n(\A^\sim)\to M_n(\C)$ is the quotient map.

The index pairings of $(\A,\H,\D)$ and $(\A,\H^2,\D_\mu)$ with $K_*(\A)$ agree. This is
proved in \cite[Section 3]{CGRS2}, and more information can be found there.

%
%

Let $u\in M_n(\A^\sim)$ be a unitary, and suppose that 
$[\D\otimes{\rm Id}_n,u](1+\D^2\otimes{\rm Id}_n)^{-1/2}$ is compact. 
This implies that 
$[\D_\mu\otimes{\rm Id}_n,\hat{u}](1+\D_\mu^2\otimes{\rm Id}_n)^{-1/2}$ is compact also.
Together with the fact that $\D_\mu$ is invertible, 
the spectral flow from $\D_\mu\otimes {\rm Id}_n$ to $\hat u(\D_\mu\otimes {\rm Id}_n) \hat u^*$
is well-defined, by Theorem \ref{SF=IND}. 
Consequently if $P_\mu$ is the spectral projection of $\D_\mu$ corresponding to 
the interval $[0,\infty)$ then 
$$
{\rm Index}_{\tau\otimes\tr_2\otimes\tr_n}(P_\mu \hat{u} P_\mu)= 
sf_{\tau\otimes\tr_2\otimes\tr_n}(\D_\mu, \hat{u}\D_\mu \hat{u}^*).
$$

\begin{cor}
\label{SF=nicer-index}
Let $(\A,\HH,\D)$  be an odd nonunital semifinite spectral triple relative to $(\cn,\tau)$
(no smoothness assumptions)  and let $\A^\sim$ denote $\A$ with a unit adjoined. 
Let $u\in M_n(\A^\sim)$ be a unitary
such that 
$[\D\otimes {\rm Id}_n,u](1+\D^2\otimes{\rm Id}_n)^{-1/2}$ is compact.
Setting  $P=\chi_{[0,\infty)}(\D)$, we have
\begin{align*}
{\rm Index}_{\tau\otimes\tr_2\otimes\tr_n}(P_\mu\otimes{\rm Id}_n \hat{u} P_\mu\otimes{\rm Id}_n)
&= sf_{\tau\otimes\tr_2\otimes\tr_n}(\D_\mu\otimes{\rm Id}_n, \hat{u}(\D_\mu\otimes{\rm Id}_n) \hat{u}^*)\\
&= {\rm Index }_{\tau\otimes \tr_n} \big(\,(P\otimes{\rm Id}_n)\,u\,(P\otimes{\rm Id}_n)\,\big).
\end{align*}
\end{cor}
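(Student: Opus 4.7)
The plan is to treat the two equalities separately. The first equality is a direct application of Theorem \ref{SF=IND} to the amplified, doubled semifinite spectral triple $(M_n(\A), \H^2 \otimes \C^n, \D_\mu \otimes {\rm Id}_n)$ relative to $(M_2(\cn) \otimes M_n(\C), \tau \otimes \tr_2 \otimes \tr_n)$. Since $\D_\mu$ is invertible for any $\mu>0$, so is $\D_\mu \otimes {\rm Id}_n$, so it remains only to verify the compactness hypothesis of Theorem \ref{SF=IND}. A direct block computation gives
\begin{align*}
[\D_\mu, \hat u] = \begin{pmatrix} [\D,u] & \mu({\bf 1}_u - u) \\ \mu(u - {\bf 1}_u) & 0 \end{pmatrix},
\end{align*}
where the bottom-right entry vanishes because ${\bf 1}_u$ is a scalar matrix and commutes with $\D$. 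Since $u - {\bf 1}_u \in M_n(\A)$ by the construction of $\hat u$ in \eqref{ext-A-sim}, the spectral triple axiom applied to $u - {\bf 1}_u$ together with the standing compactness hypothesis on $[\D \otimes {\rm Id}_n, u](1+\D^2 \otimes {\rm Id}_n)^{-1/2}$ yield compactness of each block of $[\D_\mu \otimes {\rm Id}_n, \hat u](1 + \D_\mu^2 \otimes {\rm Id}_n)^{-1/2}$. Theorem \ref{SF=IND} then delivers the first equality.

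For the second equality, my approach is to compare $P_\mu \hat u P_\mu$ with the `naive' diagonal projection $(P \otimes {\rm Id}_n) \oplus ((I-P) \otimes {\rm Id}_n)$, exploiting the block-diagonal structure $\hat u = u \oplus {\bf 1}_u$. One computes
\begin{align*}
\bigl((P\otimes {\rm Id}_n) \oplus ((I - P)\otimes {\rm Id}_n)\bigr)\, \hat u \,\bigl((P\otimes {\rm Id}_n) \oplus ((I - P)\otimes {\rm Id}_n)\bigr) = (PuP) \oplus \bigl((I - P){\bf 1}_u(I - P)\bigr),
\end{align*}
where the bottom summand is the restriction of a scalar unitary (namely $\rho^n(u)$) to the range of $(I-P)\otimes {\rm Id}_n$, and so is unitary there with Breuer--Fredholm index zero. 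Hence its index is exactly ${\rm Index}_{\tau \otimes \tr_n}((P \otimes {\rm Id}_n) u (P \otimes {\rm Id}_n))$, and the desired identity reduces to showing that $P_\mu \hat u P_\mu$ and this block-diagonal operator have the same index.

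This last identification is precisely the invariance of the odd index pairing under the doubling construction of Definition \ref{def:double}, established in \cite[Section 3]{CGRS2}, which I would simply invoke. The main obstacle in proving the second equality from scratch is the behaviour at the kernel of $\D$: when $\D$ is not invertible, the difference $P_\mu - \bigl((P\otimes {\rm Id}_n) \oplus ((I - P)\otimes {\rm Id}_n)\bigr)$ does not vanish on $\ker \D$ as $\mu \to 0^+$, so a naive $\mu \to 0$ homotopy is unavailable. The argument in \cite[Section 3]{CGRS2} avoids this by working entirely inside the invertible regime $\mu > 0$ and identifying both indices intrinsically with the $K_1(\A)$--$K^1(\A)$ pairing of $[u]$ with the class of $(\A,\H,\D)$, which is independent of the doubling parameter.
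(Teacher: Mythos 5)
Your proposal is correct and follows essentially the same route as the paper: the first equality comes from Theorem \ref{SF=IND} applied to the doubled (and amplified) triple, after verifying that $\D_\mu$ is invertible and that $[\D_\mu\otimes{\rm Id}_n,\hat u](1+\D_\mu^2\otimes{\rm Id}_n)^{-1/2}$ is compact (for which your block computation of $[\D_\mu,\hat u]$ and the observation that $\D_\mu^2$ is diagonal supply the details the paper leaves implicit); and the second equality is, as in the paper, deferred to the doubling-invariance result of \cite[Proposition 3.25]{CGRS2}. Your ``naive projection'' reformulation is a sound way of seeing what that cited proposition delivers, but it does not do independent work here, so the proof is the paper's.
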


\begin{proof}
This follows from the comments above and \cite[Proposition 3.25]{CGRS2} 
where the final equality is proved.
\end{proof}

For the next few definitions and results, we assume that we have a nonunital smoothly 
summable spectral triple $(\A,\HH,\D)$, relative to $(\cn,\tau)$, 
and of spectral dimension $p\geq 1$. This implies that 
$[\D,u](1+\D^2)^{-1/2}$ is compact, and similarly with $\D$ replaced by $\D_\mu$, so
that the spectral flow of $t\mapsto \D_\mu+tu[\D_\mu,u^*]$ is well-defined.

In this context we can define the resolvent cocyle, which can be used to compute the
pairing with $K$-theory. 

\begin{definition}\label{expectation} For $0<a<1/2$, let $\ell$ be the vertical line
$\ell=\{a+iv:v\in\R\}$.
Given $m\in\mathbb N$, $s\in\mathbb R^+$, $r\in\mathbb C$ 
and operators $A_0,\dots,A_m\in {\rm OP}^{k_i}(\D)$ and 
$A_0\in{\rm OP}^{k_0}_0(\D)$, such that   $|k|-2m<2\Re(r)$, we  define
\begin{align}
\label{expect1}
 \langle A_0,\dots,A_m\rangle_{m,r,s}&:=\frac{1}{2\pi i}\,\tau\Big(\gamma\int_\ell
\lambda^{-p/2-r}
A_0\,R_{s}(\lambda)\cdots A_m\,R_{s}(\lambda)\,d\lambda\Big),
\end{align}
Here $\gamma$ is the ${\mathbb Z}_2$-grading in the even case and the identity operator
in the odd case, and $R_s(\lambda)=(\lambda-(1+s^2+\D^2))^{-1}$.
\end{definition}

We now state the definition of the resolvent cocycle for odd spectral triples in terms of the
expectations  $\langle\cdot,\dots,\cdot\rangle_{m,r,s}$. Let 
$N:=\lfloor p/2\rfloor+1$ and $M:=2N-1$.

\begin{definition}
\label{resolvent}
For   $m= 1,\,3,\dots,M$, we introduce the constants  $\eta_m$ by
\begin{align*}
 \eta_m&=\left(-\sqrt{2i}\right)2^{m+1}\frac{\Gamma(m/2+1)}{\Gamma(m+1)}.
  \end{align*}
Then for  $\Re(r)>(1-m)/2$, the $m$-th component of the
{\bf resolvent cocycle} $\phi_m^r:\A\otimes{\A}^{\otimes m}\to{\C}$ is defined by
\begin{align*}
\phi_{m}^r(a_0,\dots,a_m)&:=\eta_m\int_0^\infty s^m\langle
a_0,da_1,\dots,da_m\rangle_{m,r,s}\,ds,
\end{align*}
\end{definition}

\begin{rem*}
{\rm It is important to note that the resolvent cocycle $\phi^r_m$ is well 
defined even when $\D$ is not invertible. This follows from \cite[Lemma 4.3]{CGRS2}.}
\end{rem*}

To state our main theorem we need the definition of the Chern character of a unitary.
The (infinite) $b,B$-cycle $Ch(u)=(Ch_{2j+1}(u))_{j\geq0}$ of $u\in M_n(\A)$ is given by
$$
Ch_{2j+1}(u)=(-1)^jj!\sum_{i_0,i_1,\dots,i_{2j+1}}(u^*)_{i_0,i_1}\otimes (u)_{i_1,i_2}
\otimes (u^*)_{i_2,i_3}\otimes\cdots\otimes (u)_{i_{2j+1},i_0}\ \ \ 
(2j+2\ \ \mbox{entries}).
$$
We refer to \cite{CPRS2} for more information in this context.

\begin{theorem}
\label{thm:sf=cocycle}
Let $(\A,\HH,\D)$  be an odd nonunital smoothly summable 
semifinite spectral triple relative to $(\cn,\tau)$, 
and let $\A^\sim$ denote $\A$ with a unit adjoined. Let $u\in M_n(\A^\sim)$ be a unitary. Then
\begin{align*}
{\rm Index }_{\tau\otimes\tr_n} \big(P\,uP\big)
&=sf_{\tau\otimes\tr_2\otimes\tr_n}(\D_\mu, \hat{u}\D_\mu \hat{u}^*)
=\frac{-1}{\sqrt{2\pi i}}{\rm Res}_{r=(1-p)/2}\sum_{m=1,odd}^M\phi_m^r({\rm Ch}_m(u))\\
&=\frac{-1}{\sqrt{2\pi i}}\frac{1}{2}{\rm Res}_{r=(1-p)/2}\sum_{m=1,odd}^M
\phi_m^r({\rm Ch}_m(u)-{\rm Ch}_m(u^*)).
\end{align*}
In particular, the residues exist. 
\end{theorem}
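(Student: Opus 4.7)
The plan is to establish the three equalities in turn, together with the existence of the residues.

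\emph{First equality.} The identity ${\rm Index}_{\tau\otimes\tr_n}(PuP) = sf_{\tau\otimes\tr_2\otimes\tr_n}(\D_\mu,\hat{u}\D_\mu\hat{u}^*)$ follows from Corollary~\ref{SF=nicer-index}, once its compactness hypothesis is verified. Because $(\A,\H,\D)$ is smoothly summable, the entries of $u-1$ and of $[\D,u]$ lie in $\B_1^\infty(\D,p)$. Combining this with the inclusion $\bigcup_{r<-p}{\rm OP}^r_0(\D)\subset \cn_\tau\subset \K(\cn,\tau)$ recorded in Section~\ref{sec:cut-and-paste}, the operator $[\D\otimes{\rm Id}_n,u](1+\D^2\otimes{\rm Id}_n)^{-1/2}$ is $\tau\otimes\tr_n$-compact, as required by Corollary~\ref{SF=nicer-index}.

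\emph{Second equality.} For this step I invoke the odd nonunital local index formula of \cite{CGRS2}, which under the present smooth summability hypotheses asserts that
$$
{\rm Index}_{\tau\otimes\tr_n}(PuP) = \frac{-1}{\sqrt{2\pi i}}\,{\rm Res}_{r=(1-p)/2}\sum_{m=1,\,odd}^M \phi_m^r({\rm Ch}_m(u)),
$$
and simultaneously establishes existence of the residues. Conceptually the proof in \cite{CGRS2} proceeds by passing to the doubled Hilbert space (where $\D_\mu$ is invertible), computing the pairing via the resolvent expansion of $F_{\D_\mu}$, and showing via the trace and pseudodifferential estimates of Section~\ref{sec:cut-and-paste} that the off-diagonal $\mu$-contributions yield only quantities regular at $r=(1-p)/2$, so the residue reduces to the expression above in $\D$ and $u$.

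\emph{Third equality and main obstacle.} For the symmetrization, apply the second equality to both $u$ and $u^*$. Since $Pu^*P = (PuP)^*$ is a Breuer--Fredholm parametrix for $PuP$ modulo compacts, the roles of kernel and cokernel are interchanged and ${\rm Index}_{\tau\otimes\tr_n}(Pu^*P) = -{\rm Index}_{\tau\otimes\tr_n}(PuP)$; hence the residues of $\sum\phi_m^r({\rm Ch}_m(u^*))$ and of $\sum\phi_m^r({\rm Ch}_m(u))$ differ by a sign, and averaging yields the final equality with its $\tfrac12$ factor. The main obstacle is the second step: it imports the full analytic machinery of \cite{CGRS2}, namely controlling trace-norm convergence of the resolvent expansion in the nonunital setting, establishing meromorphic continuation of $r\mapsto \phi_m^r$ through $r=(1-p)/2$, and verifying cancellation of the off-diagonal contributions from $\D_\mu$. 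The present theorem is essentially the repackaging of that analytical content as the spectral-flow formula which is the conceptual point of the paper.
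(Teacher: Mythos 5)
Your proposal is correct, and for the first two equalities it follows exactly the paper's route: the first is Corollary~\ref{SF=nicer-index} together with the observation (recorded just before the theorem) that smooth summability makes $[\D\otimes{\rm Id}_n,u](1+\D^2\otimes{\rm Id}_n)^{-1/2}$ compact; the second is the nonunital local index formula, \cite[Theorem~4.33]{CGRS2}, which also gives existence of the residues. For the third equality, however, you take a genuinely different route from the paper. You apply the local index formula to $u^*$ as well and use the elementary index identity ${\rm Index}_{\tau\otimes\tr_n}(Pu^*P)=-{\rm Index}_{\tau\otimes\tr_n}(PuP)$ (since $Pu^*P=(PuP)^*$), so that the residues of $\sum\phi_m^r({\rm Ch}_m(u))$ and $\sum\phi_m^r({\rm Ch}_m(u^*))$ are negatives of each other and averaging produces the $\tfrac12$. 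The paper instead deduces the symmetrised formula from the algebraic fact that the entire $(b,B)$-cycle $\big({\rm Ch}_m(\hat u)+{\rm Ch}_m(\hat u^*)\big)_{m\ge1,\,{\rm odd}}$ is a boundary (\cite[Lemma~3.1]{CPRS2}), so the (residue of the) resolvent cocycle annihilates it. Both arguments are valid; yours is more elementary and operator-theoretic, whereas the paper's uses the cocycle structure directly, which is closer in spirit to the cyclic-homological framework being invoked. One minor caveat: the inclusion $\bigcup_{r<-p}{\rm OP}^r_0(\D)\subset\cn_\tau$ you cite gives trace-class, not immediately compactness of the order $-1$ operator $[\D,u](1+\D^2)^{-1/2}$ when $p>1$; the paper simply asserts this compactness as a known consequence of smooth summability (it is established in \cite{CGRS2}), so you are better served citing that directly rather than the $r<-p$ inclusion.
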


\begin{proof}
The first equality has already been discussed. The second equality is 
the nonunital local index formula, \cite[Theorem 4.33]{CGRS2}, and the third equality is 
again the local index formula together with the fact that the (entire) $(b,\,B)$ cycle 
$\big({\rm Ch}_m(\hat{u})+{\rm Ch}_m(\hat{u}^*)\big)_{m=1,3,\dots}$ is a boundary 
(see \cite[Lemma 3.1]{CPRS2} for a 
simple proof).
\end{proof}

\subsection{The statement of the main result}

Our main result shows that we can obtain a  formula analogous to that of \cite{CP1,CP2} for 
the paths we are considering.

\begin{theorem}\label{specflow}
Let $(\A,\HH,\D)$  be an odd nonunital 
smoothly summable spectral triple, relative to $(\cn,\tau)$, of 
spectral dimension $p\geq 1$. Let $F_\mu=\D_\mu|\D_\mu|^{-1}$ 
(where $\D_\mu$ comes from the double picture), 
$P_\mu=(1+F_\mu)/2$ and $P=\chi_{[0,\infty)}(\D)$.
Then for any unitary $u\in M_n(\A^\sim)$ we have the equalities
\begin{align*}
&{\rm Index}_{\tau\otimes\tr_2\otimes\tr_n}\big(({P}_\mu\otimes {\rm Id}_n) \hat u({P}_\mu\otimes {\rm Id}_n)\big)
={\rm Index}_{\tau\otimes\tr_n}\big(({P}\otimes {\rm Id}_n) u({P}\otimes {\rm Id}_n)\big)\\
&={\rm Res}_{z=0}\int_0^1\tau\otimes 
\tr_n\Big(u[\D\otimes {\rm Id}_n,u^*]\big(1+(\D\otimes {\rm Id}_n+tu[\D\otimes {\rm Id}_n,u^*])^2\big)^{-1/2-z}\Big)dt\,.
\end{align*}
\end{theorem}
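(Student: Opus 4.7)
The first equality is Corollary \ref{SF=nicer-index}. To prove the second equality, set $A := u[\D\otimes{\rm Id}_n,u^*]$ and $\D_t := \D\otimes{\rm Id}_n + tA$, and write $\tau_n := \tau\otimes\tr_n$; the right-hand side is then $\mathop{\rm Res}_{z=0}\int_0^1 \tau_n\bigl(A(1+\D_t^2)^{-1/2-z}\bigr)\,dt$. Since Theorem \ref{thm:sf=cocycle} identifies the index with $\frac{-1}{\sqrt{2\pi i}}\mathop{\rm Res}_{r=(1-p)/2}\sum_{m\,\mathrm{odd}}^M\phi_m^r({\rm Ch}_m(u))$, the problem reduces to proving
\begin{equation*}
\frac{-1}{\sqrt{2\pi i}}\mathop{\rm Res}_{r=(1-p)/2}\sum_{m\,\mathrm{odd}}^M\phi_m^r({\rm Ch}_m(u)) = \mathop{\rm Res}_{z=0}\int_0^1 \tau_n\bigl(A(1+\D_t^2)^{-1/2-z}\bigr)\,dt.
\end{equation*}
This is the reverse direction of the derivation performed in \cite{CPRS2}, which started from an integral of Carey-Phillips type and produced the resolvent cocycle.

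The plan is to rewrite the right-hand integrand so that it unpacks into the cocycle sum. Note first that $A\in{\rm OP}^0(\D)_{\rm sa}$, so by Proposition \ref{affine3} the entire path $\D_t$ lies in the affine space $\D+{\rm OP}^0(\D)$ with uniform pseudodifferential calculus. Express the fractional power as a contour integral
\begin{equation*}
(1+\D_t^2)^{-1/2-z} = \frac{1}{2\pi i}\int_\ell \lambda^{-1/2-z}\bigl(\lambda-(1+\D_t^2)\bigr)^{-1}d\lambda,
\end{equation*}
and expand the resolvent of $\D_t$ in a finite Neumann series based at $\D$ using the identity
\begin{equation*}
\bigl(\lambda-(1+\D_t^2)\bigr)^{-1} = R(\lambda)+R(\lambda)\bigl(t(\D A+A\D)+t^2 A^2\bigr)\bigl(\lambda-(1+\D_t^2)\bigr)^{-1},
\end{equation*}
where $R(\lambda):=(\lambda-(1+\D^2))^{-1}$. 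Iterating up to order $M$ produces a sum of products of $R(\lambda)$'s separated by $A$'s and $\D$'s which, after grouping $\D A + A\D$ as commutators $[\D, u^{(*)}]$, match, up to the combinatorial reshuffling encoded by the Chern character ${\rm Ch}_m(u)$, exactly the multi-resolvent expectations $\langle a_0, da_1,\dots,da_m\rangle_{m,r,s}$ appearing in $\phi_m^r$. Integrating against $\int_0^1 t^k\,dt=(k+1)^{-1}$, performing the change of variable that matches the $s$-integration in Definition \ref{resolvent} with our $\lambda$-integration, and tracking the constants $\eta_m$ and the prefactor $-1/\sqrt{2\pi i}$, the desired identity should emerge term-by-term.

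The algebraic matching is in principle routine; the main obstacle is the analytic justification in the nonunital setting. One must (i)~verify that every term of the Neumann expansion is trace class for suitable $\Re z$, (ii)~control the remainder so that it is holomorphic at $z=0$ and thus contributes nothing to the residue, and (iii)~justify the interchange of the $t$-integration, the $\lambda$-contour integration, and the residue. The uniform resolvent bounds of Lemma \ref{lem4} (applied with a shift absorbing $s$ into $\lambda$), together with the tame pseudodifferential calculus and integration theory of \cite{CGRS2} and the smooth summability hypothesis $\A\cup[\D,\A]\subset\B_1^\infty(\D,p)$, supply precisely these ingredients. Existence of the residue on the right-hand side then follows automatically from Theorem \ref{thm:sf=cocycle}.
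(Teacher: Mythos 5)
There is a genuine gap: your plan of expanding $(1+\D_t^2)^{-1/2-z}$ in a finite Neumann series based at $\D$ does not in fact reproduce the resolvent cocycle, and the mismatch is structural, not just combinatorial.

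Two concrete obstructions. First, the expectations $\langle\cdot\rangle_{m,r,s}$ in Definition \ref{expectation} involve the shifted resolvent $R_s(\lambda)=\big(\lambda-(1+s^2+\D^2)\big)^{-1}$ together with an integral $\int_0^\infty s^m(\cdots)\,ds$, and this $s$ is a separate real variable with its own geometric meaning (the Clifford direction $\tD_{t,s}=\tD_t+sq$); it does not arise from, and cannot be produced by, a ``change of variable matching the $s$-integration with the $\lambda$-integration.'' Your direct expansion produces only the unshifted $R(\lambda)=(\lambda-(1+\D^2))^{-1}$ and powers of $t$ from $\int_0^1 t^k\,dt$, so the two families of terms are simply not in bijection. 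Second, expanding $\D_t^2=\D^2+t\{\D,A\}+t^2A^2$ produces anticommutators $\{\D,A\}=\D A+A\D$ (plus $A^2$), and these do not ``group as commutators $[\D,u]$ or $[\D,u^*]$'': a short computation gives $\{\D,A\}=\D u\D u^*+u\D u^*\D-2\D^2$, which is not a product of $[\D,u]$'s and $[\D,u^*]$'s. What makes the Chern-character combinatorics work in the paper is precisely the Clifford doubling: the anticommutator $\{\tD,q\}$ with the Clifford variable $q(u)$ equals $\sigma_1\otimes\begin{pmatrix}0&[\D,u^*]\\ -[\D,u]&0\end{pmatrix}$ (Equation \eqref{ouf}), which is what converts the resolvent expansion into alternating strings of $[\D,u]$ and $[\D,u^*]$ as required by ${\rm Ch}_m(u)$.

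The paper's proof therefore passes through a two-parameter family $\tD_{t,s}$ in the affine space $\Phi$ of Definition \ref{VG}, shows that the one form of Equation \eqref{une} is trace-norm well defined (Lemma \ref{gros}) and closed (Proposition \ref{prop:use-again}, Corollary \ref{cor:exact}), hence exact, and uses this to rotate the integration contour from the $s$-direction (where, by Proposition \ref{resolvent_11}, it recovers the resolvent cocycle and hence the index) to the $t$-direction (where, by Lemma \ref{mains}, it recovers the stated $t$-integral), with Lemma \ref{horizint} and Proposition \ref{prop3} disposing of the boundary contributions. The exact one form is not a technicality to be handled by ``uniform resolvent bounds''; it is the mechanism that bridges the two formulas. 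Without it, or an equally powerful substitute, the claimed term-by-term matching between $\int_0^1 t^k\,dt$-weighted anticommutator strings and $\int_0^\infty s^m\,ds$-weighted commutator strings does not hold, and the proposed proof does not go through.
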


To prove this Theorem, we are going to follow closely some aspects of the 
argument of \cite[section 5.3]{CPRS2}.

We fix the following data for the remainder of this Section.
Let $(\A,\HH,\D)$  be an odd nonunital semifinite spectral triple relative to $(\cn,\tau)$, smoothly summable
with spectral dimension $p\geq 1$.
To simplify the discussion, we restrict ourselves to the case where  
$u$ is a unitary in $ \mathcal A^\sim $ and not in $M_n(\A^\sim)$.
(The general $u\in M_n(\A^\sim)$ case, will follow
by replacing $\D$ by $\D\otimes {\rm Id}_n$
and  $\cn$ by $M_n(\cn)$ in all the formulas below.) 

\subsection{Notation and basic results for exploiting Clifford periodicity}
The idea behind the construction in this Section comes from \cite{G}.
We use however the analytic formulation in \cite{CP2, CPRS2}.

We form the Hilbert space
$\tilde{\mathcal H}:=\C^2\otimes\C^2\otimes\HH$ acted upon by the
von Neumann algebra,
$\tilde{\mathcal N}:=M_2(\C)\otimes M_2(\C)\otimes \mathcal N$. 
Note that $\tilde\cn$ is naturally endowed
with the normal semifinite faithful trace $\tilde\tau:=\tr_4\otimes\tau$. 
Introduce the two dimensional Clifford algebra, with generators in the form (Pauli matrices)
$$
\sigma_1:= \left(\begin{array}{cc}
0 & 1 \\
1 & 0
\end{array} \right),\ \ \sigma_2:= \left(\begin{array}{cc}
0 & -i \\
i & 0
\end{array} \right), \ \ \sigma_3:=
\left(\begin{array}{cc} 1 & 0 \\
0 & -1
\end{array} \right).
$$
Define the grading on $\tilde{\mathcal H}$ by
$\Gamma := \sigma_2\otimes\sigma_3\otimes {\rm Id}_\cn\in \tilde{\mathcal N}$.

For $t\in [0,1]$ and $ s\in [0,\infty)$, introduce the even operators (i.e., they commute with $\Gamma$)
$$
q\equiv q(u):=\sigma_3\otimes\left(\begin{array}{cc}
0 & -i u^* \\
i u & 0
\end{array} \right)\,,\quad 
\tilde\D := \sigma_2\otimes {\rm Id}_2\otimes \D, \quad
\tD_{t}:= (1-t)\tilde\D - tq\tilde\D q, \quad
\tD_{t,s} := \tD_{t} + sq\,.
$$
These unbounded operators are affiliated
with $\tilde{\mathcal N}$.
We begin by identifying $\B_1^\infty(\tilde\D,p)$.

\begin{lemma}
Let $\D$ be a self-adjoint operator affiliated with a semifinite von Neumann algebra $\cn$ endowed
with a semifinite normal faithful trace $\tau$. Then, with the notations introduced above, 
$
\B_1^\infty(\tilde\D,p) =M_4\big(\B_1^\infty(\D,p)\big)
$.
\end{lemma}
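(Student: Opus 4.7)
The plan is to exploit the fact that $\tilde\D$ is built from $\D$ by a simple tensoring with $\sigma_2\otimes\mathrm{Id}_2$, reducing everything to componentwise statements about matrices over $\cn$.

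First I would observe the crucial simplification: since $\sigma_2^2=\mathrm{Id}_2$, we have
$\tilde\D^2 = (\sigma_2\otimes\mathrm{Id}_2)^2\otimes\D^2 = \mathrm{Id}_4\otimes\D^2$,
and consequently $(1+\tilde\D^2)^{s/2}=\mathrm{Id}_4\otimes(1+\D^2)^{s/2}$ for every $s\in\R$. Identifying $\tilde\cn$ with $M_4(\cn)$ and $\tilde\tau$ with $\tr_4\otimes\tau$, an element $T=(T_{ij})_{i,j=1}^4\in M_4(\cn)_+$ satisfies
\[
\vf_s^{\tilde\D}(T)=\sum_{j=1}^4 \vf_s^{\D}(T_{jj}),
\]
so a matrix $T=(T_{ij})$ lies in $\cn_{\vf_s}^{1/2}$ (for $\tilde\D$) exactly when each entry $T_{ij}$ lies in $\cn_{\vf_s}^{1/2}$ (for $\D$), since the diagonal entries of $T^*T$ are $\sum_i T_{ij}^*T_{ij}$ and positivity plus faithfulness of $\tau$ forces each summand to be finite individually. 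Intersecting over $s>p$ yields $\B_2(\tilde\D,p)=M_4(\B_2(\D,p))$, and taking products gives $\B_1(\tilde\D,p)=M_4(\B_1(\D,p))$.

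Next I would address the smooth layer. Because $(1+\tilde\D^2)^{1/2}=\mathrm{Id}_4\otimes(1+\D^2)^{1/2}$ is a scalar multiple (in the matrix sense) of the corresponding operator for $\D$, the commutator $\delta_{\tilde\D}(T)=[(1+\tilde\D^2)^{1/2},T]$ acts entrywise:
\[
\delta_{\tilde\D}\bigl((T_{ij})\bigr)=\bigl(\delta(T_{ij})\bigr),
\qquad
\delta_{\tilde\D}^k\bigl((T_{ij})\bigr)=\bigl(\delta^k(T_{ij})\bigr).
\]
Combining this with the identification of $\B_1(\tilde\D,p)$ above, membership of $T$ in $\B_1^\infty(\tilde\D,p)$ is equivalent to $\delta^k(T_{ij})\in\B_1(\D,p)$ for all $k\geq 0$ and all $i,j$, i.e.\ to $T_{ij}\in\B_1^\infty(\D,p)$ for every entry. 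This gives $\B_1^\infty(\tilde\D,p)=M_4(\B_1^\infty(\D,p))$.

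There is no real obstacle here; the only thing requiring mild care is handling off-diagonal entries when passing from the weight $\vf_s^{\tilde\D}$ to $\vf_s^{\D}$, which is done by looking at $T^*T$ and using positivity of each $T_{ij}^*T_{ij}$ to control individual entries. Everything else is formal since $\tilde\D$ is block-diagonal (up to the harmless sign pattern in $\sigma_2$) and $(1+\tilde\D^2)^{z/2}$ is a scalar-on-matrices operator.
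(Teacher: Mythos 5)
Your proposal is correct and follows exactly the route the paper takes, namely the single observation $\tilde\D^2=\mathrm{Id}_4\otimes\D^2$ (equivalently $|\tilde\D|=\mathrm{Id}_4\otimes|\D|$), after which the weights $\vf_s$ and the derivation $\delta$ act entrywise on $M_4(\cn)$ and the identification $\B_1^\infty(\tilde\D,p)=M_4(\B_1^\infty(\D,p))$ drops out. The paper simply records this in one line; you have spelled out the intermediate entrywise verifications (including the step from $\B_2$ to $\B_1$ via matrix units), which are all routine and correct.
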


\begin{proof}
Note that
$
|\tilde\D|={\rm Id}_4\otimes|\D|\,,
$
so that the result follows from the definition of $\B_1^\infty(\D,p)$.
\end{proof}

We also let $q_e$ be the operator $ q(u)$ when $u={\rm Id}_{\A^\sim}$, that is,
$q_e=\s_3\otimes\s_2\otimes{\rm Id}_{\cn}$. 
In particular the Lemma above,  implies that
$$
q-q_e=\sigma_3\otimes\left(\begin{array}{cc}
0 & -i( u^*-{\rm Id}_{\cn}) \\
i( u-{\rm Id}_{\cn}) & 0
\end{array} \right),
$$
belongs to the algebra
$ M_4(\B_1^\infty(\D,p))=\B_1^\infty(\tilde\D,p) $
by the assumption of smooth summability (Definition \ref{delta-phi}) 
and since for a unitary $u\in\A^\sim$, using the 
definition after Equation \eqref{ext-A-sim}, we have ${\bf 1}_u={\rm Id}_{\A^\sim}$. Note also that
\begin{equation}
\label{ouf}
\{\tilde\D_\mu,q\}:=\tilde\D q + q\tilde\D =\sigma_1\otimes\left(\begin{array}{cc}
0 & [\D, u^*] \\
-[\D, u] & 0
\end{array} \right)\in M_4(\B_1^\infty(\D,p))\,.
\end{equation}

Set $\rho:=\s_2\otimes {\rm Id}_2\otimes {\rm Id}_{\cn}$,  so that 
$\rho$ anticommutes with $q$ and commutes with $\tilde\D$ and $\Gamma$.
Note that
$$ 
\tD_{t}\equiv \tD_{t,0}= \sigma_2\otimes \left(\begin{array}{cc}
\D+t u^*[\D, u] & 0 \\
0 & \D+tu[\D, u^*]
\end{array} \right)\in M_4({\rm OP}^1(\D)).
$$
Taking  derivatives in ${\rm OP}^1(\D)$, we get
$$
\frac{d\tD_{t}}{dt} = \sigma_2\otimes \left(\begin{array}{cc}
u^*[\D,u] & 0 \\
0 & u[\D,u^*]
\end{array} \right)\in\B_1^\infty(\tilde\D,p)\,.
$$

Define the graded trace on $\tilde{\mathcal N}$, by setting
$S\tau(A): =\frac{1}{2}\tilde\tau(\Gamma A)$, for $A$ of trace-class in
$\tilde{\mathcal N}$. 
For example, for $r>0$ we have
\begin{align*}
&\frac{d\tD_{t}}{dt}(1+\tD_{t}^2)^{-p/2-r}=\\
&\qquad \sigma_2\otimes \begin{pmatrix}
u^*[\D,u]\big(1+(\D+tu^*[\D,u])^2\big)^{-p/2-r} & 0 \\
0 & u[\D,u^*]\big(1+(\D+tu[\D,u^*])^2\big)^{-p/2-r}
\end{pmatrix},
\end{align*}
which is of trace-class on $\tilde\cn$ 
by \cite[Lemma 2.40]{CGRS2} since 
$u^*[\D,u]$ and $u[\D,u^*]$ belong to the algebra $\B_1^\infty(\D,p)$,
and since $\B_1^\infty(\D,p)=\B_1^\infty(\D+tu^*[\D,u],p)=\B_1^\infty(\D+tu[\D,u^*],p)$ by 
Proposition \ref{affine3}. These observations prove most of the next lemma.

\begin{lemma}
\label{mains}
With the notation above, and $r>0$, we have
\begin{align*}
&\int_0^1S\tau\Big(\frac{d\tD_{t}}{dt}(1+\tD_{t}^2)^{-p/2-r}\Big)dt\\
&\qquad\quad=
\int_0^1\tau\Big(u^*[\D,u]\big(1\!+\!(\D+tu^*[\D ,u])^2\big)^{-p/2-r}\!\!- 
u[\D,u^*]
\big(1\!+\!(\D+tu[\D,u^*])^2\big)^{-p/2-r}\Big)dt\\
&\qquad\quad=2\int_0^1\tau\Big(u^*[\D,u]\big(1\!+\!(\D+tu^*[\D ,u])^2\big)^{-p/2-r}\Big)dt.
\end{align*}
\end{lemma}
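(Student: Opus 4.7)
My plan is to dispatch the two equalities separately. For the first, I will compute the graded trace $S\tau(\cdot) = \tfrac12 \tilde\tau(\Gamma\,\cdot)$ directly using the tensor-product structure already laid out before the lemma. All trace-class issues are settled by the observations preceding the statement, which invoke \cite[Lemma 2.40]{CGRS2} and Proposition \ref{affine3}.

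The first equality is essentially Pauli-matrix bookkeeping. Because $\sigma_2^2=\mathrm{Id}_2$, the operator $\tD_t^2$, and hence $(1+\tD_t^2)^{-p/2-r}$, is block diagonal of the form $\mathrm{Id}_2\otimes\mathrm{diag}\bigl((1+(\D+tu^*[\D,u])^2)^{-p/2-r},(1+(\D+tu[\D,u^*])^2)^{-p/2-r}\bigr)$. Multiplying by $d\tD_t/dt$ preserves the leading $\sigma_2$-factor; then multiplying by $\Gamma=\sigma_2\otimes\sigma_3\otimes\mathrm{Id}_\cn$ collapses the $\sigma_2\cdot\sigma_2$ to $\mathrm{Id}_2$ while $\sigma_3$ inserts the relative sign between the two diagonal blocks. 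Applying $\tfrac12\tilde\tau=\tfrac12\mathrm{tr}_2\otimes\mathrm{tr}_2\otimes\tau$ absorbs the factor of $2$ coming from the first $\mathrm{tr}_2$ and, after integrating over $t$, yields exactly the first equality.

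For the second equality my idea is to exploit cyclicity of $\tau$ under conjugation by $u$. The two identities I will need are
$$
u(\D+tu^*[\D,u])u^* = \D + (1-t)\,u[\D,u^*],\qquad u\bigl(u^*[\D,u]\bigr)u^* = -u[\D,u^*],
$$
both immediate from $u\D u^*-\D = u[\D,u^*] = -[\D,u]u^*$. Conjugating the $u^*[\D,u]$-integrand by $u$ and using $uf(X)u^* = f(uXu^*)$ for Borel $f$ turns it, up to an overall sign, into the $u[\D,u^*]$-integrand with $t$ replaced by $1-t$. After the substitution $t\mapsto 1-t$ in the second term of the middle line, the two summands are seen to coincide, producing the factor of $2$.

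The main obstacle I anticipate is nothing analytic: once the trace-class property is granted, everything reduces to algebraic manipulation under $\tau$. The care required is mostly in tracking signs coming from the Pauli matrices in the first equality and from $[\D,u]u^* = -u[\D,u^*]$ in the second. No estimates of the type of Lemma \ref{lem4} or Corollary \ref{cor:bd} are needed, since the integrands are manifestly trace-class in $\tilde\cn$ for each $t\in[0,1]$ and $r>0$, and the $t$-integrals are over a compact interval.
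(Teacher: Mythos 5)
Your proposal is correct and follows essentially the same strategy as the paper: the first equality is the Pauli/Clifford bookkeeping already set up in the displayed formula for $\frac{d\tD_t}{dt}(1+\tD_t^2)^{-p/2-r}$ preceding the lemma, and the second equality is obtained by conjugating under $\tau$ by $u$ and then substituting $t\mapsto 1-t$. The paper's write-up arranges the conjugation calculation slightly differently (rewriting the resolvent argument as $(1-t)u^*\D u + t\D$ before substituting $w=1-t$), but the key identities you use, $u(\D+tu^*[\D,u])u^* = \D+(1-t)u[\D,u^*]$ and $u(u^*[\D,u])u^*=-u[\D,u^*]$, encode exactly the same computation.
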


\begin{proof}
We only need to justify the last equality. The following elementary calculation does this.
\begin{align*}
&\int_0^1\tau\Big(u[\D,u^*]\big(1\!+\!(\D+tu[\D,u^*])^2\big)^{-p/2-r}\Big)dt\\
&=\int_0^1\tau\Big(u[\D,u^*]u\big(1\!+\!(u^*\D u+t[\D,u^*]u)^2\big)^{-p/2-r}u^*\Big)dt\\
&=\int_0^1\tau\Big(-[\D,u]\big(1\!+\!(u^*\D u-tu^*[\D,u])^2\big)^{-p/2-r}u^*\Big)dt\\
&=\int_0^1\tau\Big(-u^*[\D,u]\big(1\!+\!((1-t)u^*\D u+t\D)^2\big)^{-p/2-r}\Big)dt\\
&=\int_1^0\tau\Big(u^*[\D,u]\big(1\!+\!(wu^*\D u+(1-w)\D)^2\big)^{-p/2-r}\Big)dw,\qquad w=1-t\\
&=-\int_0^1\tau\Big(u^*[\D,u]\big(1\!+\!(\D+wu^*[\D,u])^2\big)^{-p/2-r}\Big)dw.\qedhere
\end{align*}
\end{proof}

\subsection{Obtaining a preliminary formula from the resolvent cocycle}
Our plan is to reverse the argument in \cite{CPRS2}. This means we 
plan to go from the resolvent cocycle to a spectral flow formula.
First we calculate
\begin{equation}
\label{interesting}
\tD_{t,s}^2= \tD_{t}^2+ s(1-2t)\{\tD,q\} +s^2\,.
\end{equation}
We prove a trace class result for this family of operators.

\begin{lemma}
\label{lem:traceable}
With the notations above, we have
$$
(q-q_e)\big(1+\tD^2+s^2+s\{\tD,q\}\big)^{-p/2-r}\in \L^1(\tilde\cn,\tilde\tau)\,,
\quad\forall \,r\in\C\quad\mbox{with}\quad \Re(r)>0.
$$
\label{cvb}
\end{lemma}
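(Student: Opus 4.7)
The plan is to identify the expression inside the resolvent as $1$ plus a perfect square so that the problem becomes one of analyzing the $\L^1$ behaviour of a perturbed operator, and then to apply the perturbation invariance of the tame pseudodifferential calculus (Proposition \ref{affine3}). The crucial identity, obtained by a direct calculation from $q = \sigma_3\otimes\bigl(\begin{smallmatrix} 0 & -iu^* \\ iu & 0\end{smallmatrix}\bigr)$, is $q^2 = \mathrm{Id}_{\tilde\cn}$, so that
\begin{equation*}
(\tD + sq)^2 = \tD^2 + s\{\tD,q\} + s^2 q^2 = \tD^2 + s\{\tD,q\} + s^2,
\end{equation*}
and hence $1+\tD^2+s^2+s\{\tD,q\} = 1 + \tD_{0,s}^2$ in the notation above. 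The lemma therefore reduces to showing
\begin{equation*}
(q-q_e)\,(1+\tD_{0,s}^2)^{-p/2-r}\in\L^1(\tilde\cn,\tilde\tau)\quad\text{whenever }\Re(r)>0.
\end{equation*}

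Next I would show that $q\in{\rm OP}^0(\tD)_{\mathrm{sa}}$, in order to invoke Proposition \ref{affine3} with perturbation $B=sq$. The decomposition $q=q_e+(q-q_e)$ is convenient: by the Pauli relation $\sigma_3\sigma_2=-\sigma_2\sigma_3$, the constant part $q_e=\sigma_3\otimes\sigma_2\otimes\mathrm{Id}_\cn$ anticommutes with $\tD=\sigma_2\otimes\mathrm{Id}_2\otimes\D$, hence commutes with $\tD^2$ and with $(1+\tD^2)^{1/2}$, so $\delta^k(q_e)=0$ for every $k$. The remainder $q-q_e$ lies in $M_4(\B_1^\infty(\D,p)) = \B_1^\infty(\tD,p) = {\rm OP}^0_0(\tD)\subset{\rm OP}^0(\tD)$, as recorded just before \eqref{ouf}. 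Since $q$ is self-adjoint and bounded, this gives $q\in{\rm OP}^0(\tD)_{\mathrm{sa}}$.

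With this in hand, Proposition \ref{affine3} and its corollary yield the Fr\'echet space identities ${\rm OP}^r(\tD_{0,s})={\rm OP}^r(\tD)$ and ${\rm OP}^r_0(\tD_{0,s})={\rm OP}^r_0(\tD)$ for every $r\in\R$. Consequently $q-q_e\in{\rm OP}^0_0(\tD_{0,s})$ and $(1+\tD_{0,s}^2)^{-p/2-\Re(r)}\in{\rm OP}^{-p-2\Re(r)}(\tD_{0,s})$; the product property ${\rm OP}^0_0\cdot {\rm OP}^{-p-2\Re(r)}\subset{\rm OP}^{-p-2\Re(r)}_0$ from \cite[Lemma~1.31]{CGRS2} then places $(q-q_e)(1+\tD_{0,s}^2)^{-p/2-\Re(r)}$ in ${\rm OP}^{-p-2\Re(r)}_0(\tD_{0,s})$, which is contained in $\L^1(\tilde\cn,\tilde\tau)$ by \cite[Corollary~1.30]{CGRS2} as soon as $-p-2\Re(r)<-p$, i.e.\ $\Re(r)>0$. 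To handle complex $r$ one writes $(1+\tD_{0,s}^2)^{-p/2-r}=(1+\tD_{0,s}^2)^{-p/2-\Re(r)}(1+\tD_{0,s}^2)^{-i\Im(r)}$ and absorbs the unitary factor on the right.

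The main obstacle is the verification that $q\in{\rm OP}^0(\tD)$: a priori one might worry that $q$ fails to be in the smooth calculus because it is a bounded but not compact perturbation, and because $s$ can be large so that the perturbation $sq$ has no smallness. Both concerns are disposed of by (i) splitting off the constant part $q_e$ that commutes with $(1+\tD^2)^{1/2}$, and (ii) the fact that Proposition \ref{affine3} places no norm restriction on the ${\rm OP}^0$ perturbation, so arbitrary $s$ is allowed.
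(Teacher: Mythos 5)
Your proof is correct and follows essentially the same route as the paper's: identify $1+\tD^2+s^2+s\{\tD,q\}$ as $1+(\tD+sq)^2$ via $q^2=1$, show $q\in{\rm OP}^0(\tD)_{\rm sa}$, and invoke the base-point independence of the pseudodifferential calculus under ${\rm OP}^0$-perturbations. The only cosmetic difference is that the paper establishes $q\in\bigcap_n{\rm dom}\,\tilde\delta^n$ by directly computing $\tilde\delta^n(q)$ in terms of $\delta^n(u),\delta^n(u^*)$, whereas you split $q=q_e+(q-q_e)$ and observe $\tilde\delta^k(q_e)=0$; your explicit appeal to Proposition \ref{affine3} and its corollary (rather than Proposition \ref{affine1}, which the paper cites) is in fact the more precise reference for the ${\rm OP}^r_0$-level conclusion you need.
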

\begin{proof}
As seen earlier, $ q-q_e\in\B_1^\infty(\tD,p)$. 
Set $\tilde\delta:=[(1+\tD^2)^{1/2},\cdot]$ and 
$\delta:=[(1+\D^2)^{1/2},\cdot]$. Then we get for all $n\in\N$
$$
\tilde\delta^n(q)=\sigma_3\otimes\begin{pmatrix}
0&-i\delta^n(u^*)\\-i\delta^n(u)&0
\end{pmatrix}\,.
$$
Thus $q$ belongs to the intersection of the domains of the 
powers of the derivation $\tilde\delta$, so that we can 
apply Proposition \ref{affine1}, which in this context gives
 $\B_1(\tD,p)=\B_1(\tD+sq,p)$. The proof is completed 
 by using $q^2=1$ so that one has
 $\tD^2+s^2+s\{\tD,q\}=(\tD+sq)^2$.
\end{proof}

\begin{lemma}\label{reso}
With the notation as above, and with
$\Re(r)>0$ there exists $\delta\in(0,1)$ such that  with $M=2\lfloor p/2\rfloor+1$:
\begin{align*}
&\int_0^\infty S\tau
\Big(q\big(1+\tD^2+s^2+s\{\tD,q\}\big)^{-p/2-r}-q_e\big(1+\tD^2+s^2\big)^{-p/2-r}\Big)ds\\
&\qquad\qquad= \frac{1}{2\pi i}\sum_{m=1,odd}^{M}\int_0^\infty s^m S\tau\left(\int_\ell
\lambda^{-p/2-r}q\left(R_s(\lambda)\{\tD,q\}\right)^mR_s(\lambda)d\lambda\right)ds
+{\rm holo},
\end{align*}
where $\rm  holo$ is a function of $r$ holomorphic for $\Re(r)>-p/2+\delta$.
\end{lemma}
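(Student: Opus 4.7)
The strategy is to reverse the argument of \cite{CPRS2}: starting from the resolvent-expansion form of the integrand, I will produce an $s$-integrated expression of spectral-flow type. The central tool is the Cauchy representation
\[
A^{-p/2-r} \,=\, \frac{1}{2\pi i}\int_\ell \lambda^{-p/2-r}(\lambda-A)^{-1}\,d\lambda,
\]
applied to $A=1+\tilde\D^2+s^2+s\{\tilde\D,q\}$, combined with a finite perturbative expansion of $(\lambda-A)^{-1}$ around the unperturbed $R_s(\lambda)=(\lambda-(1+\tilde\D^2+s^2))^{-1}$, together with the parity symmetry carried by $\rho$ introduced earlier in the subsection.

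Iterating the identity $(\lambda-A-V)^{-1}=R_s(\lambda)+R_s(\lambda)V(\lambda-A-V)^{-1}$ with $V=s\{\tilde\D,q\}$ gives
\[
(\lambda-A-V)^{-1} \,=\, \sum_{m=0}^{M} s^m\bigl[R_s(\lambda)\{\tilde\D,q\}\bigr]^m R_s(\lambda) + \mathcal R_M(\lambda,s),
\]
with $\mathcal R_M(\lambda,s)=s^{M+1}[R_s(\lambda)\{\tilde\D,q\}]^{M+1}(\lambda-A-V)^{-1}$. Multiplying by $q$, applying the Cauchy integral in $\lambda$, and combining with the subtracted $-q_e(1+\tilde\D^2+s^2)^{-p/2-r}$ term on the left-hand side, the $m=0$ contribution collapses to the single residual $S\tau\bigl[(q-q_e)(1+\tilde\D^2+s^2)^{-p/2-r}\bigr]$. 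Trace-class integrability of every individual piece for $\Re(r)>0$ follows from Lemma~\ref{lem:traceable} together with \eqref{ouf}, both relying on the fact that $q-q_e$ and $\{\tilde\D,q\}$ lie in $\B_1^\infty(\tilde\D,p)$.

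I then exploit the parity element $\rho=\sigma_2\otimes{\rm Id}_2\otimes{\rm Id}_\cn$, which satisfies $\rho^2={\rm Id}$, commutes with both $\Gamma$ and $\tilde\D$, and anticommutes with $q$, with $q_e$, and therefore with $\{\tilde\D,q\}$. Because $[\rho,\Gamma]=0$, conjugation by $\rho$ preserves $S\tau$; applied to a term of the form $qR_s(\lambda)[\{\tilde\D,q\}R_s(\lambda)]^m$ it however produces the factor $(-1)^{m+1}$. Consequently every even-$m$ contribution (including the $m=0$ residual above) vanishes identically, and only the odd terms $m=1,3,\ldots,M$ survive, reproducing exactly the sum on the right-hand side of the statement.

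The main obstacle is the holomorphy of the remainder
\[
r\,\mapsto\, \int_0^\infty\frac{1}{2\pi i}\int_\ell\lambda^{-p/2-r}\,S\tau\bigl[q\,\mathcal R_M(\lambda,s)\bigr]\,d\lambda\,ds.
\]
Here the $M+1$ factors of $\{\tilde\D,q\}\in\B_1^\infty(\tilde\D,p)={\rm OP}^0_0(\tilde\D)$, the $M+2$ unperturbed resolvents, and the Cauchy weight $\lambda^{-p/2-r}$ should, via H\"older estimates in the Schatten ideals of $(\tilde\cn,\tilde\tau)$, produce a trace-norm bound on the integrand that is integrable in $(\lambda,s)$; the perturbed resolvent $(\lambda-A-V)^{-1}$ is controlled uniformly in $s$ by Lemma~\ref{lem4}. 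The choice $M=2\lfloor p/2\rfloor+1$ forces $M+1>p$, which is exactly the excess needed to push the region of absolute integrability leftward to a half-plane $\{\Re(r)>-p/2+\delta\}$ with $\delta>0$. This is the same style of pseudodifferential and trace-ideal estimate that underlies the proof of the local index formula in \cite{CGRS2}.
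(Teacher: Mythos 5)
Your proposal follows essentially the same route as the paper: Cauchy integral representation, finite resolvent expansion, vanishing of the $m=0$ and even-$m$ contributions via the parity element $\rho$ (noting that the parity argument does not apply to the remainder, which must instead be shown holomorphic), and integration in $s$. The one place you deviate from the paper's references is in controlling the remainder term $\mathcal R_M$: Lemma~\ref{lem4} bounds $(1+\D^2)^\rho(1+\D^2+sB+s^2)^{-\rho}$ uniformly in $s$ and is in fact invoked later, in Proposition~\ref{prop3}, not here; for the trace-norm integrability of the remainder (and of the $m=1,\dots,M$ terms) the paper instead appeals to \cite[Lemmas~2.42 and 4.3]{CGRS2}, with \cite[Lemma~4.16]{CGRS2} handling the $s$-integral. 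Citing those lemmas directly would tighten the sketch, but the overall strategy is the same as the paper's.
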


\begin{proof}
We use Cauchy's formula to write
\begin{align*}
&q\big(1+\tD^2+s^2+s\{\tD,q\}\big)^{-p/2-r}=
\frac{1}{2\pi i}\int_\ell
\lambda^{-p/2-r}q\big(\lambda-(1+\tilde\D^2+s\{\tD,q\}+s^2)\big)^{-1}d\lambda\,,
\end{align*}
where $\ell$ is the vertical line $\ell=\{a+iv:v\in\R\}$ with $0<a<1/2$.
Then we apply the resolvent expansion (as in Section 7 of \cite{CPRS2})  to arrive at
\begin{align}
\label{sum}
q\big(1+\tD^2+s^2+s\{\tD,q\}\big)^{-p/2-r}
&=\frac{1}{2\pi i}\int_\ell\lambda^{-p/2-r}\sum_{m=0}^{M}s^mq\left(R_s(\lambda)
\{\tD,q\}\right)^mR_s(\lambda)d\lambda\\
&\quad+s^{M+1}\frac{1}{2\pi i}\int_\ell\lambda^{-p/2-r}q
\left(R_s(\lambda)\{\tD,q\}\right)^{M+1}
\tilde{R}_s(\lambda)d\lambda\,,\nonumber
\end{align}
where $M=2\lfloor p/2\rfloor+1$ and
we use the notations
$$
R_s(\lambda)=\big(\lambda-(1+s^2+\tD^2)\big)^{-1}\quad\mbox{and} 
\quad\tilde R_s\big(\lambda)=(\lambda-(1+s^2+s\{\tilde\D,q\}+\tilde\D^2)\big)^{-1}.
$$

By \cite[Lemma 4.3]{CGRS2} and  Equation \eqref{ouf}, 
we see that  the terms with $m=1,\dots,M$ are trace-class 
for $\Re(r)>0$. By \cite[Lemma 2.42]{CGRS2}, so is the remainder term. 
Thus, the only term in this expansion which is not trace-class is the term with 
$m=0$, namely
$$
\frac{1}{2\pi
i}\int_\ell\lambda^{-p/2-r}q\,R_s(\lambda)\,d\lambda=q\big(1+\tilde\D^2+s^2\big)^{-p/2-r}\,.
$$
However, $(q-q_e)\big(1+\tilde\D^2+s^2\big)^{-p/2-r}$ is 
trace class and it has a vanishing super-trace.
Indeed, since $\rho^2={\rm Id}_{\tilde\cn}$ and that
$\rho$ commutes with $\tD$ and $\Gamma$, but 
anticommutes with $q$ and $q_e$, we find
\begin{align*}
S\tau\Big((q-q_e)\big(1+\tilde\D^2+s^2\big)^{-p/2-r}\Big)
&=\tilde\tau\Big(\Gamma\rho^2(q-q_e)\big(1+\tilde\D^2+s^2\big)^{-p/2-r}\Big)\\
&=-\tilde\tau\Big(\Gamma\rho(q-q_e)\big(1+\tilde\D^2+s^2\big)^{-p/2-r}\rho\Big)\\
&=-S\tau\Big((q-q_e)\big(1+\tilde\D^2+s^2\big)^{-p/2-r}\Big)\,.
\end{align*}

Similarly, if we consider a single term in the sum \eqref{sum}, with
$m>0$ we find 
\begin{align*}
&S\tau\left(\frac{1}{2\pi i}\int_\ell\lambda^{-p/2-r}q\left(R_s(\lambda)\{\tD,q\}\right)^m
R_s(\lambda)d\lambda\right)\\
&\qquad\qquad\qquad\qquad\qquad\qquad\qquad=(-1)^{m+1}S\tau\left(\frac{1}{2\pi
i}\int_\ell\lambda^{-p/2-r}
q\left(R_s(\lambda)\{\tD,q\}\right)^kR_s(\lambda)d\lambda\right).
\end{align*}
So if $m$ is even we get zero. 
This argument does not apply to the remainder
term
\ben 
s^{M+1}S\tau\left(\frac{1}{2\pi
i}\int_\ell\lambda^{-p/2-r}(q-q_e)
\left(R_s(\lambda)\{\tD,q-q_e\}\right)^{M+1}
\tilde{R}_s(\lambda)d\lambda\right),
\een
as $\rho$ neither commutes nor anticommutes with $\tilde{R}_s(\lambda)$.
However, the integral over $s$ of this 
remainder term is holomorphic at $r=(1-p)/2$, by \cite[Lemma 2.42]{CGRS2}.
Integrating  the remaining terms
over $s\in[0,\infty]$ using  \cite[Lemma 4.16]{CGRS2} yields the result.
\end{proof}

Next we need to relate this expression above to the resolvent cocycle evaluated on the
Chern character ${\rm Ch}(u)$.
Following \cite[section 7]{CPRS2}, we get
\begin{align*} &q\left(R_s(\lambda)\{\tD,q\}\right)^mR_s(\lambda)\\
&=i(-1)^{(m-1)/2}\s_3\s_1^m\otimes\begin{pmatrix} u^*R[\D,u]R[\D,u^*]\cdots
[\D,u]R & 0\\ 0 & uR[\D,u^*]R[\D,u]\cdots
[\D ,u^*]R\end{pmatrix}.
\end{align*}
On the right hand side we have written
$R\equiv (\lambda-(1+s^2+\D^2))^{-1}$ for the resolvent of $\D^2$.
Recall that the grading operator is $\Gamma=\s_2\otimes\sigma_3\otimes{\rm Id}_{M_2(\cn)}$,
and that $\s_2\s_3\s_1^m=i{\rm Id}_2$ for $m$ odd. Writing $\tr_4$ for the
operator-valued trace which maps
$\tilde{\mathcal N}= M_4(\mathcal N)\to \mathcal N$, we have
\begin{align*}
&\tr_4\Big(\Gamma qR_s(\lambda)\{\tD,q\}R_s(\lambda)\cdots\{\tD,q\}
R_s(\lambda)\Big)\\
&\qquad\quad\quad=2(-1)^{(m+1)/2}\big(u^*R[\D,u]R[\D,u^*]
\cdots [\D,u]R-uR[\D,u^*]R[\D,u]
\cdots [\D,u^*]R\big).
\end{align*}
Consequently, there is a $\delta$ with $0<\delta<1$ such that for $\Re(r)>0$
\begin{align*}
&\int_0^\infty
S\tau\Big(q\big(1+\tD^2+s^2+s\{\tD,q\}\big)^{-p/2-r}-q_e(1+\tD^2+s^2\big)^{-p/2-r}\Big)ds\\
&\quad=\frac{1}{2\pi i}\sum_{m=1,odd}^{M}(-1)^{(m+1)/2}\int_0^\infty s^m
\tau\Big(\int_\ell \lambda^{-p/2-r}
\Big(u^*R[\D,u]R[\D,u^*]\cdots [\D,u]R\\
&\qquad\qquad\qquad\qquad\qquad\qquad\qquad\qquad- uR
[\D,u^*]R[\D,u]\cdots [\D,u^*]R\Big)d\lambda\Big)ds
+{\rm holo}\\
&=\frac{1}{\sqrt{2\pi i}}\frac{1}{2}\sum_{m=1,odd}^M\phi^r_m({\rm Ch}_m(u)-{\rm Ch}_m(u^*))+{\rm holo}\,,
\end{align*}
where ${\rm holo}$ is a function of $r$ holomorphic for $\Re(r)>-p/2+\delta/2$, and the
last line just comes from comparing constants in the definition of $\phi^r_m$ and ${\rm Ch}_m$.

The following integral formula for the spectral flow  now follows directly from Theorem \ref{thm:sf=cocycle}. 
This is the main intermediate step to the proof of Theorem \ref{specflow}.

\begin{prop}\label{resolvent_11}
Let $M=2\lfloor p/2 \rfloor +1$. Then
\begin{align*} 
sf(\D_\mu,\hat{u}^*\D_\mu \hat{u}) &=
{\rm Res}_{r=(1-p)/2}\int_0^\infty \!\!\!\!\!S\tau\Big(q\big(1+\tD^2+s^2+s\{\tD,q\}\big)^{-p/2-r}
\!\!\!\!\!\!-q_e \big(1+\tD^2+s^2\big)^{-p/2-r}\Big)ds.
\end{align*}
\end{prop}

{\bf Remark.}  Since $q_e$ anticommutes with $\tD$, the formula above may also be written as
\begin{align*} 
sf(\D_\mu,\hat{u}^*\D_\mu \hat{u})&=
{\rm Res}_{r=(1-p)/2}\int_0^\infty S\tau\Big(q\big(1+(\tD+sq)^2\big)^{-p/2-r}
-q_e \big(1+(\tD+sq_e)^2\big)^{-p/2-r}\Big)ds\,.
\end{align*}

%
%
%
%
%

\subsection{Exact one forms}

Proposition \ref{affine3} shows that if $\D$ is unbounded and 
self-adjoint, then the space
$
\D+{\rm OP}^0(\D)_{\rm sa} = \D+\B_1^\infty(\D,p)\,,
$
is a real affine Fr\'echet  space whose topology is  independent of the base point.

\begin{definition}
\label{VG}
Let $\Phi$ be the two-dimensional real affine space
$$
\Phi:=\big\{\tilde\D+X\,:\,X=\alpha q\{\tD,q\}+\beta q\,,\quad\alpha,\beta\in\R\big\}\,.
$$
For $X=\alpha q\{\tD,q\}+\beta q$,  set ${\bf 1}_X:=\beta q_e$. 
(This is consistent with the earlier notation of Equation \eqref{ext-A-sim}.)
We then consider the one form, 
\begin{align}
\label{une}
X\in T_{\tD+Y}\Phi\mapsto 
\tilde\tau\Big(X(1+(\tD+Y)^2)^{-p/2-r}-{\bf 1}_X(1+(\tD+{\bf1}_Y)^2)^{-p/2-r}\Big)\,,\quad\Re(r)>0\,,
\end{align}
defined on the tangent space of $\Phi$
at $\tD+Y$.
\end{definition}

Our strategy in this subsection is to prove that the one form of Equation \eqref{une} is
well-defined, differentiable in trace norm, and closed. Since $\Phi$ is an affine space, a 
Poincar\'{e} Lemma argument then shows that the one form is exact.


\begin{lemma}
\label{gros}
For any $r\in\C$ with $\Re(r)>0$, the map \eqref{une} is well defined.
\end{lemma}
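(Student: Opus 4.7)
The goal is to show the operator under $\tilde\tau$ is trace-class in $(\tilde\cn, \tilde\tau)$ for $\Re(r) > 0$; finiteness of the value then follows. Setting $A := (1+(\tilde\D+Y)^2)^{-p/2-r}$ and $B := (1+(\tilde\D+\mathbf{1}_Y)^2)^{-p/2-r}$, I would split
$$XA - \mathbf{1}_X B = (X - \mathbf{1}_X)\,A + \mathbf{1}_X\,(A - B)$$
and handle the two summands separately. For the first, note $X - \mathbf{1}_X = \alpha\,q\{\tilde\D, q\} + \beta(q - q_e) \in \B_1^\infty(\tilde\D, p)$: smooth summability and the matrix form of $q$ give $q - q_e \in M_4(\B_1^\infty(\D,p)) = \B_1^\infty(\tilde\D, p)$; Equation \eqref{ouf} gives $\{\tilde\D, q\} \in \B_1^\infty(\tilde\D, p)$; and $q_e$ commutes with $(1+\tilde\D^2)^{1/2}$, so $q_e \in {\rm OP}^0(\tilde\D)$ and hence $q = q_e + (q-q_e) \in {\rm OP}^0(\tilde\D)$, so $q\{\tilde\D,q\} \in {\rm OP}^0 \cdot \B_1^\infty \subset \B_1^\infty$. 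The same reasoning gives $Y \in {\rm OP}^0(\tilde\D)_{\rm sa}$, so by Proposition \ref{affine3} we have $\B_1^\infty(\tilde\D+Y,p) = \B_1^\infty(\tilde\D,p)$, and $(X - \mathbf{1}_X)\,A \in {\rm OP}^{-p-2\Re(r)}_0(\tilde\D+Y)$, which sits inside $\L^1(\tilde\cn,\tilde\tau)$ for $\Re(r) > 0$ by \cite[Corollary 1.30]{CGRS2}.

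For the second summand, I would use that $\{\sigma_2, \sigma_3\} = 0$ forces $\{\tilde\D, q_e\} = 0$, whence $(\tilde\D + \mathbf{1}_Y)^2 = \tilde\D^2 + (\beta')^2$ for $\mathbf{1}_Y = \beta' q_e$. Setting $Z := Y - \mathbf{1}_Y \in \B_1^\infty(\tilde\D, p)$ and $\tilde\D_0 := \tilde\D + \mathbf{1}_Y$, so that $\tilde\D + Y = \tilde\D_0 + Z$, Cauchy's formula along a vertical line $\ell$ combined with the resolvent identity gives
$$A - B = \frac{1}{2\pi i}\int_\ell \lambda^{-p/2-r}\, R(\lambda)\bigl(\tilde\D_0 Z + Z\tilde\D_0 + Z^2\bigr) R_0(\lambda)\,d\lambda,$$
where $R(\lambda)$ and $R_0(\lambda)$ are the resolvents of $1+(\tilde\D_0+Z)^2$ and $1+\tilde\D_0^2$ respectively. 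Since $\tilde\D_0 Z + Z\tilde\D_0 \in {\rm OP}^1_0(\tilde\D)$ and $Z^2 \in {\rm OP}^0_0(\tilde\D)$, trace-norm estimates analogous to those of \cite[Lemma 4.3]{CGRS2}, combined with Lemma \ref{V simplification Z} to interchange the functional calculi of $\tilde\D$, $\tilde\D_0$ and $\tilde\D + Y$, show that the contour integral converges in trace norm for $\Re(r) > 0$. Multiplication by the bounded operator $\mathbf{1}_X$ preserves trace-class, giving the required statement.

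The main technical obstacle is the trace-norm convergence of the contour integral for $A - B$: one must feed the $\B_1^\infty$-structure of $Z$ and the ${\rm OP}^1_0$-structure of $\tilde\D_0 Z + Z\tilde\D_0$ into the resolvent expansion and carefully track orders in the tame pseudodifferential calculus, rather than rely on the pointwise bound $\|\lambda^{-p/2-r} R(\lambda)\|$. Once both summands are trace-class, the trace exists as a complex number and the one-form is well-defined, establishing the lemma.
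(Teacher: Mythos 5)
Your decomposition of the first term agrees with the paper's and is handled correctly: $X - \mathbf{1}_X \in \B_1^\infty(\tD,p)$ together with Proposition~\ref{affine3} and \cite[Corollary 1.30]{CGRS2} gives the trace-class property for $\Re(r)>0$. Where you diverge from the paper is in the second term: the paper estimates $A - B$ via the Laplace-transform representation $(1+T)^{-p/2-r}=\Gamma(p/2+r)^{-1}\int_0^\infty t^{p/2+r-1}e^{-t}e^{-tT}\,dt$ and the Duhamel formula, whereas you use Cauchy's formula along $\ell$ together with a single application of the resolvent identity.

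The resolvent route has a genuine gap as you have written it. A single resolvent identity produces the mixed-resolvent expression $R(\lambda)\,(\tilde\D_0 Z + Z\tilde\D_0 + Z^2)\,R_0(\lambda)$, where $R$ and $R_0$ are resolvents of \emph{different} operators; Lemma~4.3 of \cite{CGRS2} is formulated for expressions in which all resolvents are $R_s(\lambda)$ for a single fixed operator, so it does not apply directly, and ``interchange the functional calculi'' via Lemma~\ref{V simplification Z} does not by itself convert $R(\lambda)$ into $R_0(\lambda)$. More importantly, two resolvents is not enough once $p$ is large: a resolvent $(1+\tD^2)^\alpha R_0(\lambda)$ is bounded only for $\alpha\le 1$, so the $\B_1^\infty$ trace-class mechanism (which requires absorbing roughly $(1+\tD^2)^{-p/2-\eta}$) cannot be fed through $R(\lambda)(\cdot)R_0(\lambda)$ when $p$ exceeds about $3$. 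To repair this one must iterate the resolvent expansion up to order comparable with $M=2\lfloor p/2\rfloor+1$ and treat the remainder separately — exactly what the paper does later, in Lemma~\ref{reso}, but not what you have done here. The Duhamel/heat-kernel argument the paper uses for Lemma~\ref{gros} sidesteps this entirely, because $\|(1+\tD^2)^\alpha e^{-t\tD^2}\|\lesssim t^{-\alpha}$ absorbs arbitrarily high powers at the cost of integrable $t$-singularities against $t^{p/2+r}e^{-t}$, making the estimate uniform in $p$. So your overall structure is right and the first half coincides with the paper, but the contour-integral treatment of $A-B$ either needs the full iterated resolvent expansion or should be replaced by the Laplace/Duhamel computation.
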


\begin{proof}
Let $r\in\C$ with $\Re(r)>0$.
First write 
\begin{align*}
&X(1+(\tD+Y)^2)^{-p/2-r}-{\bf 1}_X(1+(\tD+{\bf 1}_Y)^2)^{-p/2-r}\\
&\quad=(X-{\bf 1}_X)(1+(\tD+Y)^2)^{-p/2-r}-{\bf 1}_X\Big((1+(\tD+{\bf1}_Y)^2)^{-p/2-r}
-(1+(\tD+Y)^2)^{-p/2-r}\Big)\,.
\end{align*}
The first term is trace-class since $\Re(r)>0$, $X-{\bf 1}_X\in \B^\infty_1(\tD,p)$ 
and $\B^\infty_1(\tD+Y,p)=\B^\infty_1(\tD,p)$ by Proposition \ref{affine3}. For the second term, we employ the 
Laplace transform representation and Duhamel formula, yielding
\begin{align*}
&(1+(\tD+Y)^2)^{-p/2-r}-(1+(\tD+{\bf1}_Y)^2)^{-p/2-r}\\
&\quad\qquad\qquad\qquad\qquad\qquad
=\frac{1}{\Gamma(p/2+r)}
\int_0^\infty t^{p/2+r-1} e^{-t}\Big(e^{-t(\tD+Y)^2}-e^{-t(\tD+{\bf1}_Y)^2}\Big)dt\\
&\quad\qquad\qquad\qquad\qquad\qquad
=-\frac{1}{\Gamma(p/2+r)}\int_0^\infty 
t^{p/2+r} e^{-t}\Big(\int_0^1e^{-st(\tD+Y)^2} \,Z \,e^{-(1-s)t(\tD+{\bf1}_Y)^2}ds\Big)dt \,,
\end{align*}
where we have set 
$$
Z:=(\tD+Y)^2-(\tD+{\bf1}_Y)^2=(Y-{\bf1}_Y)\tD+\tD
(Y-{\bf1}_Y)+Y^2-{\bf1}_Y^2\,.
$$
By assumption $Y-{\bf1}_Y$ belongs to $\B_1^\infty(\tD,p)$ and 
a short computation shows that
$Y^2-{\bf1}_Y^2$ belongs to the same space.  Let us estimate the 
trace-norm of the operator corresponding to the
first term in $Z$. First for $s\in [0,1/2]$, we have
\begin{align*}
\big\|e^{-st(\tD+Y)^2} \,(Y-{\bf1}_Y)\tD \,e^{-(1-s)t(\tD+{\bf1}_Y)^2}\big\|_1
\leq&\|(Y-{\bf1}_Y)(1+(\tD-{\bf1}_Y)^2)^{-p/2-\Re(r)}\|_1\\
&\times\|(1+(\tD-{\bf1}_Y)^2)^{p/2+\Re(r)} \tD e^{-t(\tD+{\bf1}_Y)^2/2}\|\,.
\end{align*}
Since ${\bf 1}_Y$ is proportional to $q_e$, it anticommutes with $\tD$ and thus 
$$
\tD\,e^{-t(\tD+{\bf1}_Y)^2/2}=e^{-t(\tD-{\bf1}_Y)^2/2}\,\tD\,.
$$
Hence we obtain the norm estimate
\begin{align*}
&\|(1+(\tD-{\bf1}_Y)^2)^{p/2+\Re(r)} \tD e^{-t(\tD+{\bf1}_Y)^2/2}\|
=\|(1+(\tD-{\bf1}_Y)^2)^{p/2+\Re(r)}  e^{-t(\tD-{\bf1}_Y)^2/2}\tD\|\\
&\leq \|(1+(\tD-{\bf1}_Y)^2)^{p/2+\Re(r)}  e^{-t(\tD-{\bf1}_Y)^2/2}(\tD-{\bf1}_Y)\|\\
&\qquad\qquad\qquad\qquad
+\|{\bf1}_Y\| \|(1+(\tD-{\bf1}_Y)^2)^{p/2+\Re(r)}  e^{-t(\tD-{\bf1}_Y)^2/2}\|\\
&\qquad\qquad\qquad\qquad\qquad\qquad\leq c_1 t^{-p/2-\Re(r)-1/2}+c_2 t^{-p/2-\Re(r)},
\end{align*}
by elementary spectral theory.
 For $s\in [1/2,1]$, we have
\begin{align*}
\big\|e^{-st(\tD+Y)^2} \,(Y-{\bf1}_Y)\tD \,e^{-(1-s)t(\tD+{\bf1}_Y)^2}\big\|_1
&\leq\big\|e^{-t(\tD+Y)^2/2} \big(1+(\tD+Y)^2\big)^{p/r+\Re(r)+1/2}\|\\
&\qquad\times\|\big(1+(\tD+Y)^2\big)^{-p/r-\Re(r)-1/2}(Y-{\bf1}_Y)\tD \|_1\\
&\leq c_3 t^{-p/2-\Re(r)-1/2}\,,
\end{align*}
where we have estimated the last trace norm by a constant (depending on $r$) 
using \cite[Lemma 2.42]{CGRS2}. 
The operator corresponding to the 
second term in $Z$ gives the same contribution. 
Indeed,  as a change of variable under the $s$-integral shows, it is
the adjoint of the first term. The third and last term in $Z$ is  
even more easily estimated in trace norm, again using \cite[Lemma 2.42]{CGRS2}, 
by  $c_4 t^{-p/2-\Re(r)}$. Thus, we get
\begin{align*}
\big\|(1+(\tD+Y)^2)^{-p/2-r}-(1+(\tD+{\bf1}_Y)^2)^{-p/2-r}\big\|_1\leq 
\frac{1}{\Gamma(p/2+r)}\int_0^\infty (C_1t^{-1/2}+C_2) e^{-t}dt\,,
\end{align*}
and the proof is complete.
\end{proof}

\begin{rem*}
{\rm The following result  establishing that the one form is closed may be 
generalised using \cite{Ge}, however we adopt a different approach sufficient for our purposes.}
\end{rem*}

To prove that the one form is closed, we must establish that
\begin{align}
\label{closed}
&\frac{d}{dt}\Big\vert_{t=0}\tilde\tau\Big(X(1+(\tD+t Y)^2)^{-p/2-r}
-{\bf1}_X(1+(\tD+t{\bf1}_{Y})^2)^{-p/2-r}\Big)\\
&\quad=\frac{d}{dt}\Big\vert_{t=0}\tilde\tau\Big(Y(1+(\tD+tX)^2)^{-p/2-r}
-{\bf1}_Y(1+(\tD+t{\bf1}_{X})^2)^{-p/2-r}\Big)\,,
\quad\forall\, \Re(r)>0,\nonumber\
\end{align}
and for all $X,Y$ in the tangent space of $\Phi$ at 
$\tD$. The proof of this fact is a corollary of the following result.

\begin{prop}
\label{prop:use-again}
Let $X,Y\in T_{\tD}\Phi$. Then for all $\Re(r)>0$, the map
$$
\R\ni t\mapsto \alpha_{X,Y}^r(t)
:= Y(1+(\tD+tX)^2)^{-p/2-r}-{\bf 1}_Y(1+(\tD+t{\bf1}_{X})^2)^{-p/2-r}\in\tilde\cn\,,
$$
is  differentiable at $t=0$ in the trace-norm topology. 
Moreover, the value of its  derivative  at $t=0$ is given by 
$$
{\dot\alpha_{X,Y}^r}(0)
=-\frac1{\Gamma(p/2+r)}\int_0^\infty 
u^{p/2+r}e^{-u}\int_0^1Y\,e^{-su\tD^2}\,\{\tD,X\}\,e^{-(1-s)u\tD^2}\,ds\,du\,.
$$
\label{unedeplus}
\end{prop}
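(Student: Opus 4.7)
The strategy is to combine the Laplace transform representation
\[
(1+A^2)^{-p/2-r} = \frac{1}{\Gamma(p/2+r)}\int_0^\infty u^{p/2+r-1}e^{-u}\,e^{-uA^2}\,du
\]
with the Duhamel formula and pass to the limit in trace norm by dominated convergence, exactly as in the proof of Lemma \ref{gros}. The first step is to rewrite
\[
\alpha_{X,Y}^r(t) - \alpha_{X,Y}^r(0)
= (Y-\mathbf{1}_Y)\Big[(1+(\tD+tX)^2)^{-p/2-r} - (1+\tD^2)^{-p/2-r}\Big]
+ \mathbf{1}_Y\Big[(1+(\tD+tX)^2)^{-p/2-r} - (1+(\tD+t\mathbf{1}_X)^2)^{-p/2-r}\Big],
\]
so that in each bracket the natural ``leading coefficient'' lies in $\B_1^\infty(\tD,p)$: for the first bracket we use $Y-\mathbf{1}_Y\in\B_1^\infty(\tD,p)$, and for the second we will exploit that the expansion of the bracket begins at order $t$ with $\{\tD,X-\mathbf{1}_X\}\in\B_1^\infty(\tD,p)$, the crucial cancellation being $\{\tD,\mathbf{1}_X\}=0$ since $\mathbf{1}_X$ is a scalar multiple of $q_e$ which anticommutes with $\tD$.

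Applying the Laplace formula and Duhamel to each bracket gives, for the first,
\[
(1+(\tD+tX)^2)^{-p/2-r} - (1+\tD^2)^{-p/2-r}
= -\frac{1}{\Gamma(p/2+r)}\int_0^\infty u^{p/2+r}e^{-u}\int_0^1 e^{-su(\tD+tX)^2}(t\{\tD,X\}+t^2X^2)\,e^{-(1-s)u\tD^2}\,ds\,du,
\]
and similarly for the second, with the perturbation replaced by $t\{\tD,X-\mathbf{1}_X\}+t^2(X^2-\mathbf{1}_X^2)$ and the right factor $e^{-(1-s)u(\tD+t\mathbf{1}_X)^2}$. Dividing by $t$ and formally sending $t\to 0$, the two contributions to $\dot\alpha_{X,Y}^r(0)$ combine as
\[
(Y-\mathbf{1}_Y)\{\tD,X\} + \mathbf{1}_Y\{\tD,X-\mathbf{1}_X\}
= Y\{\tD,X\} - \mathbf{1}_Y\{\tD,\mathbf{1}_X\} = Y\{\tD,X\},
\]
yielding exactly the claimed formula for $\dot\alpha_{X,Y}^r(0)$.

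The one substantive step is justifying the interchange of limit and double integral in trace norm. The plan is to bound the integrands uniformly for $|t|\le t_0$ by an integrable majorant. Splitting the inner $s$-integral at $s=1/2$, on $[0,1/2]$ one places the heat factor $e^{-(1-s)u\tD^2}$ (respectively $e^{-(1-s)u(\tD+t\mathbf{1}_X)^2}$) on the right and absorbs the derivative factor $\tD$ against the tame operator $(Y-\mathbf{1}_Y)$ or $\{\tD,X-\mathbf{1}_X\}$ using \cite[Lemma 2.42]{CGRS2}; on $[1/2,1]$ one symmetrically places the heat factor $e^{-su(\tD+tX)^2}$ on the left, invoking Proposition \ref{affine3} so that $\B_1^\infty(\tD+tX,p)=\B_1^\infty(\tD,p)$, and Lemma \ref{V simplification Z} to control $(1+(\tD+tX)^2)^{\pm 1/2}(1+\tD^2)^{\mp 1/2}$ uniformly for small $t$. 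The anticommutation $\{\tD,\mathbf{1}_X\}=0$ is used again exactly as in Lemma \ref{gros} to move $\tD$ through $e^{-su(\tD+t\mathbf{1}_X)^2}$ when $s\in[0,1/2]$. These estimates produce a majorant of the shape $C(u^{-1/2}+1)e^{-u}$ independent of $t$, which is integrable on $(0,\infty)$; the quadratic-in-$t$ remainders vanish after division by $t$. The main obstacle is organizing these trace-norm estimates cleanly, but no new idea beyond those of Lemma \ref{gros} is required.
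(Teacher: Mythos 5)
Your proposal is correct and follows essentially the same route as the paper's proof: Laplace transform plus Duhamel, the key cancellation $\{\tD,\mathbf{1}_X\}=0$, and the trace-norm estimates of Lemma~\ref{gros}, with only a cosmetic reorganization of $\alpha_{X,Y}^r(t)-\alpha_{X,Y}^r(0)$ before applying Duhamel (you pair the tame factors $Y-\mathbf{1}_Y$ and $X-\mathbf{1}_X$ explicitly, whereas the paper expands both heat factors around $e^{-u\tD^2}$ and collects terms). One small wording caveat: $\{\tD,X-\mathbf{1}_X\}$ lies in ${\rm OP}^1_0(\tD)$ rather than $\B_1^\infty(\tD,p)$ because it carries a factor of $\tD$, but since you correctly propose to absorb that $\tD$ into the adjacent heat factor (exactly as in Lemma~\ref{gros}), this does not affect your argument.
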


\begin{proof}
We use the same integral representation as in  Lemma \ref{gros}, 
and the fact that $\{\tD,{\bf 1}_X\}=0$, to get
\begin{align*}
&\alpha_{X,Y}^r(t)=Y(1+(\tD+tX)^2)^{-p/2-r}-{\bf 1}_Y(1+(\tD+t{\bf1}_{X})^2)^{-p/2-r}\\
&=\frac{1}{\Gamma(p/2+r)}\int_0^\infty u^{p/2+r-1} e^{-u}\Big(
Ye^{-u(\tD+tY)^2}-
{\bf 1}_Ye^{-u(\tD+t{\bf1}_Y)^2}\Big)du\\
&=\frac{1}{\Gamma(r+p/2)}\int_0^\infty u^{p/2+r} e^{-u}\int_0^1\Big(
(Y-{\bf 1}_Y)e^{-u\tD^2}-Ye^{-su(\tD+tX)^2}\big(t\{X,\tD\}+t^2X^2\big)
e^{-su\tD^2} \\
&\qquad\qquad\qquad\qquad\qquad\qquad\qquad\qquad\qquad\qquad\qquad\qquad
-{\bf 1}_Ye^{-su(\tD+t{\bf 1}_X)^2}t^2X^2e^{-su\tD^2}ds\Big)du\,.
\end{align*}
Hence, 
\begin{align*}
&\frac{\alpha_{X,Y}^r(t)-\alpha_{X,Y}^r(0)}t-\dot\alpha^r_{X,Y}(0)=\\
&\frac{1}{\Gamma(r+p/2)}\int_0^\infty u^{p/2+r} e^{-u}\int_0^1\Big(
-Y\big(e^{-su(\tD+tX)^2}-e^{-su\tD^2}\big)\{\tD,X\}e^{-su\tD^2}ds\Big)du\\
&-\frac{t}{\Gamma(r+p/2)}\int_0^\infty u^{p/2+r} e^{-u}\int_0^1\Big(
Ye^{-su(\tD+tX)^2}X^2e^{-su(\tD)^2}  +{\bf 1}_Ye^{-su(\tD+t{\bf 1}_X)^2}X^2
e^{-su\tD^2}ds\Big)du.
\end{align*}
To estimate the trace norm of the second line on the right hand side, 
one can use exactly the same method as in the proof of Lemma \ref{gros}. 
This gives a bound of the form
$$
C\,(\|Y\|+1)\|X\|\|X(1+\tD^2)^{-p/2-\Re(r)}\|_1\,t\,,
$$
for some positive constant $C$. Thus the corresponding term 
goes to zero with $t$. To estimate the first line, we need one more 
application of the Duhamel formula, to obtain, up to a numerical factor,
\begin{align*}
t\,\int_0^\infty u^{p/2+r} e^{-u}\int_0^1\int_0^1Y e^{-vsu(\tD+tX)^2}\,\big(\{\tD,X\}+tX^2\big)\,
e^{-(1-v)su\tD^2}\,\{\tD,X\}\,e^{-su\tD^2}dvdsdu\,.
\end{align*}
Written in this form, we can now use the same method as in Lemma \ref{gros} to 
conclude that this term goes to zero in trace norm with $t$ also.
 \end{proof}

 We may now prove that our one form is exact.
 
 \begin{corollary}
 \label{cor:exact}
 Let $X,Y\in T_{\tD}\Phi$. Then for all $\Re(r)>0$, we have
\begin{equation}
\label{mostimportant}
 \tilde\tau\big({\dot\alpha_{X,Y}^r}(0)\big)= \tilde\tau\big({\dot\alpha_{Y,X}^r}(0)\big)\,.
 \end{equation}
\end{corollary}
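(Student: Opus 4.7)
The plan is to exploit trace cyclicity together with the explicit formula for $\dot\alpha^r_{X,Y}(0)$ supplied by Proposition \ref{prop:use-again}. A crucial simplification is that this formula contains no ${\bf 1}_X$ or ${\bf 1}_Y$ contributions, so the identity \eqref{mostimportant} reduces to establishing the pointwise-in-$(s,u)$ equality
\[
\tilde\tau\big(Y\,e^{-su\tD^2}\{\tD,X\}\,e^{-(1-s)u\tD^2}\big)
= \tilde\tau\big(X\,e^{-(1-s)u\tD^2}\{\tD,Y\}\,e^{-su\tD^2}\big),
\]
and then applying the affine change of variable $s\mapsto 1-s$ in the $s$-integral.

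My first step is to check that Fubini applies, reusing the trace-norm bounds developed in the proof of Lemma \ref{gros} and implicit in the proof of Proposition \ref{prop:use-again}. These estimates show that the integrand is of trace class with trace norm integrable against $u^{p/2+r}e^{-u}\,du\,ds$ on $[0,1]\times[0,\infty)$, which simultaneously justifies interchanging $\tilde\tau$ with both integrals and using trace cyclicity under the integral sign.

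Next, for fixed $s,u$, I would split $\{\tD,X\}=\tD X + X\tD$ and use that $\tD$ commutes with any function of $\tD^2$ through the functional calculus. For each of the two summands, I apply cyclicity of $\tilde\tau$ to move $X$ to the opposite end of the product and to interchange the roles of the two heat kernels; the two transformed summands combine to give exactly $\tilde\tau\big(X\,e^{-(1-s)u\tD^2}\{\tD,Y\}\,e^{-su\tD^2}\big)$. The substitution $s\mapsto 1-s$ in the $s$-integral then converts this into $\int_0^1 \tilde\tau\big(X\,e^{-su\tD^2}\{\tD,Y\}\,e^{-(1-s)u\tD^2}\big)\,ds$, and assembling everything recovers $\tilde\tau(\dot\alpha^r_{Y,X}(0))$.

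The main obstacle is really careful bookkeeping rather than a conceptual difficulty: each of the summands $\tD X$ and $X\tD$ involves the unbounded operator $\tD$ sandwiched between heat kernels of possibly unequal exponents, so trace-class membership must be verified term by term. This reduces to the elementary observation that $e^{-\sigma\tD^2}\tD$ is bounded with only a mild $\sigma^{-1/2}$ singularity at $\sigma=0$, which is integrable over $s\in[0,1]$ once paired with the factor $u^{p/2+r}e^{-u}$ that furnishes the overall $u$-decay.
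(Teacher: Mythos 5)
Your proposal is correct and follows the same route as the paper: invoke Proposition \ref{prop:use-again} for the integral representation of $\dot\alpha^r_{X,Y}(0)$, justify Fubini via the trace-norm bounds, use cyclicity of $\tilde\tau$ (the paper simply cites \cite{BrK} at this step, where you make the splitting of $\{\tD,X\}$ and the commutation of $\tD$ with the heat kernels explicit), and finish with the change of variable $s\mapsto 1-s$. The only difference is a level of detail, not of substance.
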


\begin{proof}
From  Proposition \ref{unedeplus}, we know that in trace-norm topology
$$
{\dot\alpha_{X,Y}^r}(0)
=-\frac1{\Gamma(r+p/2)}\int_0^\infty u^{p/2+r}e^{-u}\int_0^1
Y\,e^{-su\tD^2}\,\{\tD,X\}\,e^{-(1-s)u\tD^2}\,ds\,du\,.
$$
Since 
$$
\big\|Y\,e^{-su\tD^2}\,\{\tD,X\}\,e^{-(1-s)u\tD^2}\big\|_1
\leq c_1 t^{-p/2-\Re(r)-1/2}+ c_2 t^{-p/2-\Re(r)}\,,
$$
as shown by elementary estimates as in the proof of Proposition \ref{prop:use-again}, we deduce
that the integral representation above for 
${\dot\alpha_{X,Y}^r}(0)$ is absolutely convergent in trace-norm.
Thus, we may apply the Fubini Theorem to get
\begin{align*}
 \tilde\tau\big({\dot\alpha_{X,Y}^r}(0)\big)
 &=-\frac1{\Gamma(p/2+r)}\int_0^\infty u^{p/2+r}e^{-u}
 \int_0^1\tilde\tau\big(Y\,e^{-su\tD^2}\,\{\tD,X\}\,e^{-(1-s)u\tD^2}\big)\,ds\,du\\
 &=-\frac1{\Gamma(p/2+r)}\int_0^\infty u^{p/2+r}e^{-u}
 \int_0^1\tilde\tau\big(X\,e^{-(1-s)u\tD^2}\,\{\tD,Y\}\,e^{-su\tD^2}\big)\,ds\,du\\
  &=-\frac1{\Gamma(p/2+r)}\int_0^\infty \!\!\!u^{p/2+r}e^{-u}
  \int_0^1\!\!\!\tilde\tau\big(X\,e^{-su\tD^2}\,\{\tD,Y\}\,e^{-(1-s)u\tD^2}\big)\,ds\,du=\tilde\tau\big({\dot\alpha_{Y,X}^r}(0)\big),
\end{align*}
where we used  the cylicity of the trace, see \cite{BrK}, 
to get the second equality, and the change of variable $s\mapsto 1-s$ to get the third.
\end{proof}

The corollary establishes that our one form is closed, and as $\Phi$ is an affine space, 
a Poincar\'{e} Lemma style argument then shows that our one form is exact.


\subsection{A new spectral flow formula}
We now use the exactness of our one form to change the integration path,
and obtain a formula similar to those in \cite{CP1,CP2}. We start with the following observation.

\begin{lemma}\label{horizint} Let $(\A,\HH,\D)$
be a nonunital smoothly summable spectral triple of spectral dimension
$p\geq 1$. Then, for any  $s_0 > 0$ and any $\Re(r)>0$,
\begin{align}
\label{ch}
&\int_0^{s_0}
S\tau\left(\frac{d\tD_{1,s}}{ds}(1+\tD_{1,s}^2)^{-p/2-r}-q_e 
\big(1+\tD^2+s^2\big)^{-p/2-r}\right)ds\\
&\qquad\qquad\qquad\quad=-\int_0^{s_0}
S\tau\left(\frac{d\tD_{0,s}}{ds}(1+\tD_{0,s}^2)^{-p/2-r}-q_e 
\big(1+\tD^2+s^2\big)^{-p/2-r}\right)ds.
\nonumber
\end{align}
\end{lemma}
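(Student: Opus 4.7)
The strategy is to show that the two integrands appearing in the lemma are pointwise opposite in $s$, so that the identity follows directly from integration. Two ingredients are used: the operator identity $q\tD_{t,s}q=-\tD_{1-t,-s}$, obtained by direct computation from $\tD_{t,s}=\tD-tq\{\tD,q\}+sq$ and $q^2=1$; and the reflection $\rho:=\sigma_2\otimes{\rm Id}_2\otimes{\rm Id}_\cn$, which commutes with $\tD$ and $\Gamma$ while anticommuting with both $q$ and $q_e$, yielding in particular $\rho\tD_{t,s}^2\rho=\tD_{t,-s}^2$ and $\rho(1+\tD^2+s^2)\rho=1+\tD^2+s^2$.

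First, squaring $q\tD_{1,s}q=-\tD_{0,-s}$ gives $(1+\tD_{1,s}^2)^{-p/2-r}=q(1+\tD_{0,-s}^2)^{-p/2-r}q$, and multiplying by $q$ on the left yields $q(1+\tD_{1,s}^2)^{-p/2-r}=(1+\tD_{0,-s}^2)^{-p/2-r}q$. Applying the cyclicity of $\tilde\tau$ (together with $\Gamma q=q\Gamma$) to the trace-class combination $q(1+\tD_{1,s}^2)^{-p/2-r}-q_e(1+\tD^2+s^2)^{-p/2-r}$ then produces
\[
S\tau\big[q(1+\tD_{1,s}^2)^{-p/2-r}-q_e(1+\tD^2+s^2)^{-p/2-r}\big]
=S\tau\big[q(1+\tD_{0,-s}^2)^{-p/2-r}-q_e(1+\tD^2+s^2)^{-p/2-r}\big].
\]
Next, a $\rho$-cyclicity argument strictly analogous to the proof of Corollary \ref{cor:exact} (inserting $\rho^2={\rm Id}$ and using that $\rho$ anticommutes with both $q$ and $q_e$ while preserving $\tD^2+s^2$ and sending $\tD_{0,-s}^2$ to $\tD_{0,s}^2$) gives
\[
S\tau\big[q(1+\tD_{0,-s}^2)^{-p/2-r}-q_e(1+\tD^2+s^2)^{-p/2-r}\big]
=-S\tau\big[q(1+\tD_{0,s}^2)^{-p/2-r}-q_e(1+\tD^2+s^2)^{-p/2-r}\big].
\]
Integrating the resulting identity over $s\in[0,s_0]$ produces \eqref{ch}.

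The principal obstacle is that the individual pieces $q(1+\tD_{t,s}^2)^{-p/2-r}$ and $q_e(1+\tD^2+s^2)^{-p/2-r}$ are not separately trace-class, since neither $q$ nor $q_e$ belongs to $\B_1^\infty(\tD,p)$. Cyclicity of $\tilde\tau$ must therefore be applied only to the full combined expression. The correct way to justify this is to use the decomposition $q=(q-q_e)+q_e$: by Lemma \ref{lem:traceable} (and Proposition \ref{affine3} identifying $\B_1^\infty(\tD_{t,s},p)=\B_1^\infty(\tD,p)$), the $(q-q_e)$-part yields a trace-class operator, while the $q_e$-part combines with the subtracted $q_e(1+\tD^2+s^2)^{-p/2-r}$ to form a trace-class resolvent difference controlled by Duhamel-style estimates as in Lemma \ref{gros}. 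With these decompositions in hand, all cyclicity manipulations above become rigorous.
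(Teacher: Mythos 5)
Your proposal is correct and matches the paper's strategy (a pointwise-in-$s$ identity obtained by conjugation followed by integration), but organizes the conjugation differently. The paper establishes the intertwining $\rho q\,\tD_{1,s} = -\tD_{0,s}\,\rho q$ directly and inserts $\rho^2={\rm Id}$ into the supertrace in a single pass, using that $\rho q$ and $\rho q_e$ anticommute with $\Gamma$ upon passing $\rho$ to the other side. You instead factor this into two steps: conjugation by $q$ gives $q\tD_{1,s}q=-\tD_{0,-s}$, which together with cyclicity converts the $(1,s)$-integrand into the $(0,-s)$-integrand, and then conjugation by $\rho$ implements $s\mapsto -s$ and produces the sign. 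Composing your two conjugations recovers the paper's single one, since $(\rho q)\tD_{1,s}(\rho q)^{-1}=\rho\bigl(q\tD_{1,s}q\bigr)\rho=-\rho\tD_{0,-s}\rho=-\tD_{0,s}$. The factored version is arguably more transparent conceptually (separating the change of endpoint from the sign-producing reflection), while the paper's single conjugation is shorter. You also correctly flag the only delicate point — that $q(1+\tD_{t,s}^2)^{-p/2-r}$ and $q_e(1+\tD^2+s^2)^{-p/2-r}$ are not individually trace-class — and the decomposition $q=(q-q_e)+q_e$ plus Lemma \ref{gros}/Proposition \ref{affine3} is exactly what is needed to justify the cyclicity manipulations; strictly speaking the cyclicity used here (with $\Gamma B_1$ bounded but not trace-class) is the two-sided version valid whenever both $XY$ and $YX$ are trace-class, which is the Brown--Kosaki result \cite{BrK} already invoked in the paper for Corollary \ref{cor:exact}.
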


\begin{proof}
Note that since
  $\tD_{1,s}=-q\tD q+sq$ and $\tD_{0,s}=\tD +sq$,
we have
$
\frac{d\tD_{1,s}}{ds}=q=\frac{d\tD_{0,s}}{ds}.
$
Now observe that if $X=q$ and $Y=-q\{\tD,q\}+sq$, we have 
(with the notation of  Definition \ref{VG})
${\bf 1}_X=q_e$ and ${\bf 1}_Y=sq_e$. Thus,
\begin{align*}
&\frac{d\tD_{1,s}}{ds}(1+\D_{1,s}^2)^{-p/2-r}-q_e \big(1+\tD^2+s^2\big)^{-p/2-r}\\
&\qquad\qquad\qquad\qquad\qquad\qquad=
X(1+(\tD+Y)^2)^{-p/2-r}-{\bf 1}_X \big(1+(\tD+{\bf 1}_Y)^2\big)^{-p/2-r}\,,
\end{align*}
which is trace class by Lemma \ref{gros}. Similar comments 
apply to the second line in \eqref{ch}. Thus, each side of the equality \eqref{ch} is well defined.

Recall that $\rho=\s_2\otimes {\rm Id}_2\otimes {\rm Id}_2$, so that
$\rho q\rho=-q$, $\rho q_e\rho=-q_e$, $\rho^2=1$ and $\rho\Gamma\rho=\Gamma$. 
Since also $\rho\tD=\tD\rho$,  one easily
calculates that
$$
\rho q\tD_{1,s}=-(\tD +sq)\rho q = -\tD_{0,s}\rho q,
$$
so that
$
\rho q\tD_{1,s}^2=\tD_{0,s}^2\rho q,
$
and hence for any Borel function, $f$, we have
$
\rho q f(\tD_{1,s}^2)=f(\tD_{0,s}^2)\rho q.
$
Also, since $q_e$ anticommutes with $\tD$, it commutes with  $\tD^2$, so that
$$
\rho q_e \big(1+\tD^2+s^2\big)^{-p/2-r}= \big(1+\tD^2+s^2\big)^{-p/2-r}\rho q_e .
$$
Then
\begin{align*}
&2S\tau\Big(\frac{d\tD_{1,s}}{ds}(1+\tD_{1,s}^2)^{-p/2-r}-
q_e \big(1+\tD^2+s^2\big)^{-p/2-r}\Big)\\
&\qquad\qquad\qquad\qquad\qquad\qquad=
\tilde\tau\left(\Gamma \rho^2q(1+\tD_{1,s}^2)^{-p/2-r}-
\Gamma \rho^2q_e \big(1+\tD^2+s^2\big)^{-p/2-r}\right)\\
&\qquad\qquad\qquad\qquad\qquad\qquad= 
\tilde\tau\left(\Gamma \rho (1+\tD_{0,s}^2)^{-p/2-r}\rho q-
\Gamma \rho  \big(1+\tD^2+s^2\big)^{-p/2-r}\rho q_e\right)\\
&\qquad\qquad\qquad\qquad\qquad\qquad= 
\tilde\tau\left(\rho q\Gamma \rho (1+\tD_{0,s}^2)^{-p/2-r} -
\rho q_e\Gamma \rho  \big(1+\tD^2+s^2\big)^{-p/2-r}\right)\\
&\qquad\qquad\qquad\qquad\qquad\qquad= 
-\tilde\tau\left(\rho \Gamma \rho q (1+\tD_{0,s}^2)^{-p/2-r} -
\rho \Gamma \rho q_e \big(1+\tD^2+s^2\big)^{-p/2-r}\right)\\
&\qquad\qquad\qquad\qquad\qquad\qquad
=-2S\tau\Big(\frac{d\tD_{0,s}}{ds}(1+\tD_{1,s}^2)^{-p/2-r}-
q_e \big(1+\tD^2+s^2\big)^{-p/2-r}\Big)\,.
\end{align*}
This completes the proof.
\end{proof}

Integrating  our one-form (Definition \ref{VG})   around the boundary of the closed rectangle
$$
\{\tD_{t,s},\,(t,s)\in[0,1]\times[0,s_0]\}\subset\Phi\,,
$$
of our two-dimensional affine space $\Phi$ gives zero
by exactness (Corollary \ref{cor:exact}), and so
\begin{align*} 
&\int_0^{s_0}
S\tau\left(\frac{d\tD_{1,s}}{ds}(1+\tD_{1,s}^2)^{-p/2-r}-q_e \big(1+\tD^2+s^2\big)^{-p/2-r}\right)ds\\&-
\int_0^{s_0} S\tau\left(\frac{d\tD_{0,s}}{ds}(1+\tD_{0,s}^2)^{-p/2-r}-q_e \big(1+\tD^2+s^2\big)^{-p/2-r}\right)ds\\
&=-\int_0^1S\tau\left(\frac{d\tD_{t,0}}{dt}(1+\tD_{t,0}^2)^{-p/2-r}\right)dt+
\int_0^1S\tau\left(\frac{d\tD_{t,s_0}}{dt}(1+\tD_{t,s_0}^2)^{-p/2-r}\right)dt\,.
\end{align*}
Note that there are no ``extra terms'' coming from $q_e$ 
on the right hand side of the preceding equality because
if 
$$
X:=\frac{d\tD_{t,s}}{dt}=-q\{\tD,q\}\in\B_1^\infty(\tD,p)\,,
$$
one has  ${\bf 1}_X=0$.
Rearranging, using Lemma \ref{horizint} to combine the first two integrals 
gives

\begin{align}
\label{Q2Q}  
&2
\int_0^{s_0} S\tau\left(\frac{d\tD_{0,s}}{ds}(1+\tD_{0,s}^2)^{-p/2-r}
-q_e \big(1+\tD^2+s^2\big)^{-p/2-r}\right)ds\nonumber\\
&\qquad=\int_0^1S\tau\left(\frac{d\tD_{t,0}}{dt}(1+\tD_{t,0}^2)^{-p/2-r}\right)dt-
\int_0^1S\tau\left(\frac{d\tD_{t,s_0}}{dt}(1+\tD_{t,s_0}^2)^{-p/2-r}\right)dt\,.
\end{align}

To finish the argument we have to establish the next result.

\begin{prop}\label{prop3} Let $(\A,\HH,\D)$ be an odd
nonunital smoothly summable  spectral triple of spectral dimension
$p\geq 1$. Then, with the notations displayed above, we have
\ben
\lim_{s\to\infty}\int_0^1\Big\|\frac{d\tD_{t,s}}{dt}
(1+\tD_{t,s}^2)^{-p/2-r}\Big\|_1dt=0.
\een
\end{prop}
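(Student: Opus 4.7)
The plan is to decompose the integrand into three factors---one trace class, one bounded uniformly in $(t,s)$ for $s$ large (via Lemma \ref{lem4}), and one whose operator norm decays in $s$---and to combine the resulting bounds.

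First, observe that $\frac{d\tD_{t,s}}{dt}=\frac{d\tD_t}{dt}=-\tD-q\tD q=-\{\tD,q\}q$, which is independent of both $t$ and $s$. By Equation \eqref{ouf} together with the identification $\B_1^\infty(\tD,p)=M_4(\B_1^\infty(\D,p))$, the anticommutator $\{\tD,q\}$ lies in $\B_1^\infty(\tD,p)$. Since $q_e$ commutes with $\tD^2$ (because $\sigma_2\sigma_3=-\sigma_3\sigma_2$) and $q-q_e\in\B_1^\infty(\tD,p)\subset{\rm OP}^0(\tD)$, we have $q\in{\rm OP}^0(\tD)$; consequently $X:=\{\tD,q\}q\in\B_1^\infty(\tD,p)$, since $\B_1^\infty(\tD,p)$ is an ideal in ${\rm OP}^0(\tD)$.

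Next, fix $\epsilon\in(0,\Re(r))$ and decompose
\[
X(1+\tD_{t,s}^2)^{-p/2-r}=A_t\cdot B_{t,s}\cdot C_{t,s},
\]
where $A_t:=X(1+\tD_t^2)^{-p/2-r+\epsilon}$, $B_{t,s}:=(1+\tD_t^2)^{p/2+r-\epsilon}(1+\tD_{t,s}^2)^{-p/2-r+\epsilon}$, and $C_{t,s}:=(1+\tD_{t,s}^2)^{-\epsilon}$. For $A_t$, further write $A_t=X(1+\tD^2)^{-p/2-r+\epsilon}\cdot(1+\tD^2)^{p/2+r-\epsilon}(1+\tD_t^2)^{-p/2-r+\epsilon}$: the first piece is trace class (arguing as in the proof of Lemma \ref{gros}, using $X\in\B_1^\infty(\tD,p)$ and $\Re(r-\epsilon)>0$); the second is bounded by Corollary \ref{cor:bd} applied to the perturbation $-t\{\tD,q\}q$, with a constant that, by inspection of the proof of Lemma \ref{V simplification Z}, depends polynomially on norms of $-t\{\tD,q\}q$ and its $\delta$-images, hence is uniformly bounded for $t\in[0,1]$. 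So $\sup_{t\in[0,1]}\|A_t\|_1<\infty$.

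For $B_{t,s}$, apply Lemma \ref{lem4} with $\D:=\tD_t$, $B:=(1-2t)\{\tD,q\}\in{\rm OP}^0(\tD_t)$, and $\rho:=p/2+\Re(r)-\epsilon>0$: since $\tD_{t,s}^2=\tD_t^2+sB+s^2$ by \eqref{interesting}, and $\|B\|\le\|\{\tD,q\}\|$ uniformly in $t$, the lemma gives a uniform bound on $\|B_{t,s}\|_{op}$ for $t\in[0,1]$ and $s\ge s_0:=2\|\{\tD,q\}\|$. For $C_{t,s}$, the operator inequality $\tD_{t,s}^2\ge s^2-s\|\{\tD,q\}\|\ge s^2/2$ (for $s\ge s_0$) gives, via functional calculus, $\|C_{t,s}\|_{op}\le(1+s^2/2)^{-\epsilon}$. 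Combining these three bounds and integrating in $t$,
\[
\int_0^1\Big\|\frac{d\tD_{t,s}}{dt}(1+\tD_{t,s}^2)^{-p/2-r}\Big\|_1\,dt\le K\,(1+s^2/2)^{-\epsilon}\longrightarrow 0\quad\text{as }s\to\infty.
\]
The main obstacle is verifying uniformity in $t\in[0,1]$ of the bounds on $A_t$ and $B_{t,s}$. A careful reading of the proofs of Corollary \ref{cor:bd} (via Lemma \ref{V simplification Z}) and Lemma \ref{lem4} shows that all estimates depend only on norms of the perturbation and its $\delta$-images, which for $-t\{\tD,q\}q$ and $(1-2t)\{\tD,q\}$ are uniformly controlled by norms of $\{\tD,q\}q$ (respectively $\{\tD,q\}$) for $t$ in the compact interval $[0,1]$, yielding the required uniform constants.
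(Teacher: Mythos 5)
Your proposal is correct and follows essentially the same route as the paper's proof: both use the operator inequality $\tD_{t,s}^2\geq \tD_t^2+\tfrac12 s^2$ for $s\geq 2\|\{\tD,q\}\|$ to extract a decaying factor $(1+\tD_{t,s}^2)^{-\delta}$, and then split the remainder into a trace-class factor involving $(1+\tD_t^2)^{-p/2-r+\delta}$ and a bounded factor $(1+\tD_t^2)^{p/2+r-\delta}(1+\tD_{t,s}^2)^{-p/2-r+\delta}$ controlled by Lemma \ref{lem4}. The only difference is that you further decompose the trace-class factor to make the uniformity in $t\in[0,1]$ explicit (via Corollary \ref{cor:bd}), a point the paper leaves implicit; this is a reasonable addition but does not change the argument.
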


\begin{proof}
Remember that
$$
\frac{d\tD_{t,s}}{dt}=-q\{\tD,q\}\in\B_1^\infty(\tD,p).
$$
Then, as noted in Equation \ref{interesting}, we also have
$$
\tD_{t,s}^2=\tD_{t}^2+s(1-2t)\{\tD,q\}+s^2\,.
$$
Let $s\geq 2\|\{\tD,q\}\|$. For $t\in[0,1]$, we then we have the operator inequality
$$
\tD_{t}^2+s(1-2t)\{\tD,q\}+s^2
\geq \tD_{t}^2-s|1-2t|\|\{\tD,q\}\|+s^2\geq \tD_{t}^2+\tfrac12 s^2\,.
$$
This leads to
$$
\|(1+\tD_{t,s}^2)^{-\delta}\|\leq \|(1+\tD_{t}^2+\tfrac12 s^2)^{-\delta}\|
\leq \left(\frac {s^2}2\right)^{-\delta}\,,\quad \forall \delta>0\,.
$$
Now, let us fix $\delta>0$ such that $\Re(r)-\delta>0$. 
We then obtain
\begin{align*}
\Big\|\frac{d\tD_{t,s}}{dt}
(1+\tD_{t,s}^2)^{-r-p/2}\Big\|_1
&=\big\| q\{\tD,q\}\big(1+\tD_{t}^2+s(1-2t)\{\tD,q\}+s^2\big)^{-p/2-r}\big\|_1\\
&\leq  \left(\frac {s^2}2\right)^{-\delta}\,\|\{\tD,q\}
\big(1+\tD_{t}^2+s(1-2t)\{\tD,q\}+s^2\big)^{-p/2-r+\delta}\big\|_1\\
&\leq  C\left(\frac {s^2}2\right)^{-\delta}\,,
\end{align*}
where the constant
$$
C:= \|\{\tD,q\}\big(1+\tD_{t}^2\big)^{-p/2-r+\delta}\|_1
\|\big(1+\tD_{t}^2\big)^{p/2+r-\delta}\big(1+\tD_{t}^2
+s(1-2t)\{\tD,q\}+s^2\big)^{-p/2-r+\delta}\big\|_\infty\,,
$$
is a bounded function of $s\geq 2\|\{\tD,q\}\|$ by Lemma \ref{lem4}. This is enough to conclude.
\end{proof}

From  Proposition \ref{prop3}, we can let $s_0\to\infty$ in Equation \eqref{Q2Q}, to give explicitly 
\begin{align*}
&
\int_0^{\infty} S\tau\left(q(1+\tD^2+s\{\tD,q\}+s^2)^{-p/2-r}-
q_e \big(1+\tD^2+s^2\big)^{-p/2-r}\right)ds
\\
&\qquad\qquad\qquad\qquad\qquad\qquad\qquad\qquad\qquad\qquad\qquad
=\frac{1}{2}\int_0^1S\tau\left(\frac{d\tD_{t,0}}{dt}(1+\tD_{t,0}^2)^{-p/2-r}
\right)dt\,.
\end{align*}

By Proposition \ref{resolvent_11}, the residue of the
left hand-side gives the numerical index, and by Lemma \ref{mains}
the right hand side is 
$$
\int_0^1\tau\Big(u^*[\D,u]\big(1\!+\!(\D+tu^*[\D ,u])^2\big)^{-p/2-r}\Big)dt.
$$
This latter formula is formally similar to the  spectral flow formulas in \cite{CP1,CP2} for
the special case of unitarily equivalent endpoints, but the hypotheses we have used to derive it 
are very different from those in these earlier papers. This completes the proof of Theorem \ref{specflow}.

%
%
%
%
%
%

\end{document}